\newcommand{\subalign}[1]{%
  \vcenter{%
    \Let@ \restore@math@cr \default@tag
    \baselineskip\fontdimen10 \scriptfont\tw@
    \advance\baselineskip\fontdimen12 \scriptfont\tw@
    \lineskip\thr@@\fontdimen8 \scriptfont\thr@@
    \lineskiplimit\lineskip
    \ialign{\hfil$\m@th\scriptstyle##$&$\m@th\scriptstyle{}##$\crcr
      #1\crcr
    }%
  }
}
\newcommand{\BA}{\mathbb{A}}
\newcommand{\BC}{\mathbb{C}}
\newcommand{\BH}{\mathbb{H}}
\newcommand{\BQ}{\mathbb{Q}}
\newcommand{\BR}{\mathbb{R}}
\newcommand{\BZ}{\mathbb{Z}}
\newcommand{\cB}{\mathcal{B}}
\newcommand{\cD}{\mathcal{D}}
\newcommand{\bG}{\mathbf{G}}
\newcommand{\bH}{\mathbf{H}}
\newcommand{\cH}{\mathcal{H}}
\newcommand{\bJ}{\mathbf{J}}
\newcommand{\cL}{\mathcal{L}}
\newcommand{\cO}{\mathcal{O}}
\newcommand{\cP}{\mathcal{P}}
\newcommand{\sP}{\mathscr{P}}
\newcommand{\cS}{\mathcal{S}}
\newcommand{\cT}{\mathcal{T}}
\newcommand{\fa}{\mathfrak{a}}
\newcommand{\fk}{\mathfrak{k}}
\newcommand{\fn}{\mathfrak{n}}
\newcommand{\bv}{\mathbf{v}}
\renewcommand{\emptyset}{\varnothing}
\renewcommand{\tilde}[1]{\widetilde{#1}}
\DeclareMathOperator{\supp}{Supp}
\DeclareMathOperator{\Ad}{Ad}
\newcommand{\GL}{\operatorname{GL}}
\newcommand{\SL}{\operatorname{SL}}
\newcommand{\SO}{\operatorname{SO}}
\newcommand{\SU}{\operatorname{SU}}
\newcommand{\fsl}{\mathfrak{sl}}
\newtheorem{theorem}{Theorem}[section]
\newtheorem{proposition}[theorem]{Proposition}
\newtheorem{corollary}[theorem]{Corollary}
\newtheorem{lemma}[theorem]{Lemma}
\theoremstyle{definition}
\theoremstyle{remark}
\newtheorem*{rmk}{Remark}
\newcommand{\RNum}[1]{\uppercase\expandafter{\romannumeral #1\relax}}
\title[periods of eigenfunctions on arithmetic hyperbolic 3-manifolds]{Bounds for the periods of eigenfunctions on arithmetic hyperbolic 3-manifolds over surfaces}
\author{Jiaqi Hou}
\address{Department of Mathematics, Louisiana State University, 
Baton Rouge, LA 70803, USA}
\email{jhou7@lsu.edu}
\date{}
\numberwithin{equation}{section}
\begin{document}

\begin{abstract}
    Let $\psi$ be a Hecke-Maass form on a compact congruence arithmetic hyperbolic 3-manifold $X$, and let $Y$ be a totally geodesic surface in $X$ that is not necessarily closed. We obtain a power saving result over the local bound for the period of $\psi$ along $Y$, by applying the method of arithmetic amplification developed by Iwaniec and Sarnak.
\end{abstract}
\maketitle

\section{Introduction}

Let $X$ be a compact Riemannian manifold of dimension $n$ and let $\Delta$ denote the Laplace-Beltrami operator on $X$. If $\psi$ is a Laplace-Beltrami eigenfunction on $X$ satisfying $\Delta\psi+\lambda^2\psi=0$ with $\lambda>0$, we are interested in the bound of the integral of $\psi$ over a submanifold $Y$ of dimension $d$. We normalize $\psi$ by $\|\psi\|_2=1$. Zelditch \cite{zelditch1992kuznecov} provided the local bound in general
\begin{align}\label{trivial bound}
    \int_Y b(x) \psi(x)dx\ll \lambda^{(n-d-1)/2}
\end{align}
under certain conditions on $Y$, which are satisfied if its self-intersections are clean.  Here, $b \in C^\infty_c(Y)$ is a fixed cutoff function, and the notation $A\ll B$ will mean that there is a positive constant $C$
such that $|A| \leq C B$. (See Section \ref{notation} for the notation in this paper.)
See also Good \cite{good1983local} and Hejhal \cite{hejhal1982certaines} in the case where $X$ is a compact hyperbolic surface, and $Y$ is a closed geodesic, and see also Reznikov \cite{reznikov2015uniform}.

Let $\psi_j$, $j\ge0$, form an orthonormal basis of Laplace-Beltrami eigenfunctions
with frequencies $\lambda_j\geq0$, i.e. $\Delta\psi_j+\lambda_j^2\psi_j=0$.
Zelditch obtained the bound \eqref{trivial bound} by proving the Kuznecov sum
formula
\begin{align}\label{Zelditch's sum formula}
    \sum_{\lambda_j\leq\lambda}\left| \int_Y b(x) \psi_j(x)dx \right|^2 = C \lambda^{n-d} + O(\lambda^{n-d-1}),
\end{align}
where the constant $C$ depends on $X$, $Y$, and $b$, and is positive if $b$ is nonzero.
The Kuznecov sum formula \eqref{Zelditch's sum formula} implies that the bound \eqref{trivial bound} is sharp when $X$ is the $n$-dimensional sphere equipped with the standard metric. For instance, see \cite[p. 1305]{wyman2019looping}.

When the underlying manifold $X$ is of negative curvature, it is expected that the bound \eqref{trivial bound} can be strengthened.
For instance, Chen and Sogge \cite{chen2015integrals} proved the bound $o(1)$ for the geodesic periods of eigenfunctions on a compact surface of negative curvature.
Moreover, an improvement of $(\log\lambda)^{-1/2}$ to \eqref{trivial bound} can be proved in some cases.
For instance,  Sogge, Xi, and Zhang \cite{sogge2017geodesic} and Wyman \cite{wyman2021period} provided the bound $O((\log\lambda)^{-1/2})$ when $X$ is of negative curvature and $Y$ is a totally geodesic hypersurface.
See also \cite{wyman2021period} for the results when $Y$ has codimension at least 2.

In this paper, we let $X$ be a compact arithmetic hyperbolic $3$-manifold arising from a quaternion division algebra over a number field with one complex place. We let $\psi$ be a Hecke-Maass form on $X$. This is an eigenfunction of the Laplace-Beltrami operator and all unramified Hecke operators. We shall always assume $\psi$ to be $L^2$-normalized. In this context, it is more natural to let $\lambda\in\BC$ be the spectral parameter of $\psi$, that is, $\Delta \psi +(1+\lambda^2)\psi=0$. As we are considering large-eigenvalue asymptotics, we will also assume that $\lambda\in\BR$ and $\lambda>0$ to exclude exceptional eigenvalues.
Let $Y$ be a totally geodesic surface in $X$, and let $b\in C_c^\infty(Y)$ be a smooth compactly supported function on $Y$. We define
\begin{align*}
    P_Y(\psi,b) = \int_Y b(x)\psi(x) dx.
\end{align*}
As $Y$ is totally geodesic, its self-intersections are clean, and (\ref{trivial bound}) gives  $P_Y(\psi,b)  \ll 1$. 
Our main result is that the local bound $P_Y(\psi,b)  \ll 1$ can be improved by a \textit{power saving} as follows.
\begin{theorem}\label{main}
    Let $\psi$ be a Hecke-Maass form on $X$ with spectral parameter $\lambda$. For any totally geodesic surface $Y$ in  $X$ and $b\in C_c^\infty(Y)$, we have
    \begin{align*}
        P_Y(\psi,b)  \ll _\varepsilon \lambda^{-1/74+ \varepsilon},
    \end{align*}
    where the implied constant depends on $X$, the support of $b$, and the $L^\infty$-norms of $b$ and finitely many of its derivatives.
\end{theorem}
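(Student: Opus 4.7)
The plan is to follow the arithmetic amplification strategy of Iwaniec and Sarnak, adapted to the period functional $P_Y(\cdot,b)$. First, I would fix a nonnegative even Paley-Wiener test function $h$ with $h(\lambda)\gg 1$ and sufficiently narrow spectral support, and build an amplifier $A=\sum_{p\in\cP}(\alpha_p T_p+\beta_p T_{p^2})$ where $\cP$ is a set of rational primes of size $L$ that split in the quaternion algebra defining $X$. Choosing the coefficients via Iwaniec's trick so that $|c_\psi(A)|\gg L^{1-\varepsilon}$, using the Hecke relation $\lambda_\phi(p)^2-\lambda_\phi(p^2)=1$, and expanding positively over the entire automorphic spectrum gives
\begin{align*}
L^{2-\varepsilon}|P_Y(\psi,b)|^2\ll \sum_\phi h(\lambda_\phi)\,|c_\phi(A)|^2\,|P_Y(\phi,b)|^2.
\end{align*}

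Next, I would view the right-hand side as a spectral expansion and substitute the automorphic kernel of the operator $A\cdot h(\sqrt{\Delta})$, turning it into a ``relative'' pretrace expression of the form
\begin{align*}
\int_Y\int_Y b(x)\overline{b(y)}\,K_A(x,y)\,dx\,dy, \qquad K_A(x,y)=\sum_{\gamma\in\Gamma(A)} k_h(x,\gamma y),
\end{align*}
where $\Gamma(A)$ runs over an arithmetic collection of elements of the maximal order $\cO\subset B$ with reduced norm dictated by the primes in $\cP$, and $k_h$ is the point-pair invariant attached to $h$, of size comparable to $\lambda^2$ near the diagonal and rapidly decaying beyond hyperbolic distance $1/\lambda$. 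The essential analytic task then reduces to controlling, for each class of $\gamma$, the integral of $k_h(x,\gamma y)$ against $b\otimes\overline b$, which is governed by the count of $\gamma\in\cO$ of bounded reduced norm such that $\gamma Y$ passes within distance $1/\lambda$ of $Y$ on the support of $b$.

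Since $Y$ corresponds to an embedded non-degenerate two-dimensional subspace $V\subset B$, the closeness condition traps $\gamma$ in a narrow tube around translates of $V$ inside the rank-$4$ lattice $\cO$. I would decompose the count by the relation of $\gamma$ to $V$: stabilizing elements $\gamma\in V^\times\cap\cO$ produce the diagonal main term (together with the contribution of $\gamma=1$, which reproduces the local bound after amplification), while off-diagonal terms where $\gamma V\neq V$ but $\gamma V$ still approaches $V$ reduce to counting points in a rank-$3$ sublattice of $\cO$ constrained to a slab of codimension one and thickness $\asymp 1/\lambda$, further cut out by the norm condition coming from the primes in $\cP$. Standard geometry-of-numbers estimates, in the spirit of Marshall's amplified $L^\infty$-bounds on arithmetic hyperbolic $3$-manifolds, should produce a count of the shape $O(L^2+L^4/\lambda)$ up to factors of $\lambda^\varepsilon$.

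The main obstacle will be making this off-diagonal lattice-point count sharp enough, since the final saving is very sensitive to the geometry of the slab, to the extra Diophantine sparsity coming from the reduced-norm equation, and to the fact that even after imposing that $\gamma$ is trapped near $V$, the residual $3$-dimensional quadratic form may be indefinite and therefore require a finer counting argument (splitting by size of the coordinates of $\gamma$ in an adapted basis of $V\oplus V^\perp$). Once the count is in hand, inserting it into the amplified inequality, taking square roots, and optimizing $L$ against the diagonal contribution and the two off-diagonal terms produces the claimed bound; the denominator $74$ emerges from this optimization as the numerical outcome of balancing the amplifier length loss against the codimension of $V$ in $B$ and the quality of the rank-$3$ quadratic lattice count in the off-diagonal regime.
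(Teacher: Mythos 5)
Your overall strategy (amplify with Hecke operators chosen by the Iwaniec--Sarnak trick, expand the amplified pretrace kernel, integrate against $b\otimes\overline b$ over $Y\times Y$, and reduce to a Diophantine count of Hecke returns) is the same skeleton as the paper, but the two steps that carry all the difficulty are mis-specified, and as written the argument would not close. First, the analytic premise is false: the point-pair invariant $k_\lambda$ attached to a spectral window of width $O(1)$ around $\pm\lambda$ is \emph{not} rapidly decaying beyond distance $1/\lambda$; it is supported in a fixed ball and satisfies only $|k_\lambda(g)|\ll \lambda^2\bigl(1+\lambda\, d(g,e)\bigr)^{-1}$. Consequently, taking absolute values of the double integral over $Y\times Y$ gives roughly $\lambda$ even for $\gamma$ that exactly stabilize the plane, which is worse than Zelditch's trivial bound, so the problem cannot be reduced to a pure count of $\gamma$ whose translate of $Y$ passes within $1/\lambda$ of $Y$. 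What is actually needed is an oscillatory-integral estimate for $I(\lambda,g)$, and the correct governing quantity is the distance from $g=g_0^{-1}\gamma g_0$ to the stabilizer $H^\prime$ of $\BH^2$, not the distance between the two surfaces: two planes in $\BH^3$ can intersect (distance zero) while $g$ is far from $H^\prime$, and then the integral is small ($\sim\lambda^{-1}$) by cancellation, whereas for $g$ near $H^\prime$ there is no saving in $\lambda$ at all. This is the content of Proposition \ref{bound of I}, $I(\lambda,g)\ll\bigl(1+\lambda\,d(g,H^\prime)\bigr)^{-1}$, whose proof requires the Harish-Chandra expansion, a geometric identification of the (positive-dimensional, $\operatorname{U}(1)$-invariant) critical set, Hessian computations, and a Morse--Bott argument to handle the degeneration as $g\to H^\prime$; none of this is replaced by your ``kernel concentration'' heuristic.

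Second, the counting problem you pose is not the one the proof needs. Because the near-$H^\prime$ terms carry no $\lambda$-decay, an upper bound of the shape $O(L^2+L^4/\lambda)$ for returns in a slab of thickness $1/\lambda$ would not suffice; the relevant threshold is $\delta$ polynomially small in the \emph{amplifier length} ($\delta=CM^{-32}$ in the paper, with Hecke norms up to $X=M^4$), and what one must show is that \emph{no} nontrivial Hecke return lands within $\delta$ of $H^\prime$ at all. The paper proves exactly this (Proposition \ref{point counting prop}) by a rigidity/height argument: any such returns would all fix a vector $\bv^\pm\in V$ of height $\ll X^3$ under $\rho$, and at a prime dividing the Hecke ideal this contradicts the double-coset condition, after discarding $O(\log X)$ bad primes. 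This is a qualitatively different mechanism from the geometry-of-numbers slab count you propose, and your account of where the exponent comes from does not match the actual optimization: the final bound arises from $|\langle\psi,b\rangle|^2\ll M^{-2+\varepsilon}\bigl(\lambda^{-1}M^{38}+M\bigr)$, balanced at $M=\lambda^{1/37}$, which is how $\lambda^{-1/74}$ appears. To repair your proposal you would need to (i) replace the $1/\lambda$-tube condition by proximity of $g_0^{-1}\gamma g_0$ to $H^\prime$ and prove the stationary-phase bound on $I(\lambda,g)$, and (ii) replace the slab count by either the emptiness statement above or a genuinely uniform count of near-$H^\prime$ returns at scale $\delta$ depending on $M$, not on $\lambda$.
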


We refer to Avakumovi{\'c} \cite{avakumovic1956eigenfunktionen}  and Levitan \cite{levitan1952asymptotic} for the Weyl law on Riemannian manifolds. Recall the global Weyl law on $X$:
\begin{align*}
   \sum_{\lambda_j\leq\lambda}1 = C\lambda^{3} + O(\lambda^{2}),
\end{align*}
where $C>0$ is a constant depending only on $X$. 
Note that by combining (\ref{Zelditch's sum formula}) with the global Weyl law, we see that the average size of $P_Y(\psi, b)$ is $\lambda^{-1}$. 
\medskip

When $Y$ is a closed congruence arithmetic surface, we expect stronger bounds for the period $P_Y(\psi,b)$ to hold based on automorphic distinction principles and period formulas.  These bounds take different forms for $b$ that lie in the span of the one-dimensional representations, or of the infinite-dimensional representations, and we discuss these cases separately. In the next few paragraphs, it is explained how the bounds $P_Y(\psi,b)\ll\lambda^{-1+o(1)}$ and $P_Y(\psi,b)\ll\lambda^{-1/2+o(1)}$ would be expected to be optimal in the infinite- and finite-dimensional cases, respectively.
\medskip

We begin with the infinite-dimensional case.  Let $F$ be a totally real field, and $E$ a quadratic extension of $F$ with exactly one complex place.  We suppose that $Y$ is constructed as an adelic quotient of a group $G$ that is a form of ${\rm PGL}_2/F$, and that $X$ is similarly constructed from the base change $G_E$.  We let $L(s, \text{As} \, \psi)$ denote the Asai $L$-function of $\psi$.  If $b = \phi$ is a Hecke--Maass form on $Y$ that generates an irreducible infinite-dimensional automorphic representation, then by \cite[Theorem 1.1]{ichino2008}, the bound $P_Y(\psi, \phi) \ll \lambda^{-1/2 + \varepsilon}$ follows from the convexity bound for $L(1/2, \text{As} \, \psi \otimes \phi)$, and $P_Y(\psi, \phi) \ll \lambda^{-1 + \varepsilon}$ follows from the Lindel\"of hypothesis.  More precisely, one proves this implication by showing that the local period integrals appearing in Ichino's formula are bounded depending only on the level of $\phi$ and $\psi$ (using bounds towards Ramanujan), and applying bounds for adjoint $L$-values from \cite[Lemma 3]{blomer2010twisted}.  Moreover, if the local period integrals are bounded away from zero then the reverse implications are true.  More generally, when $b$ lies in the span of the infinite-dimensional representations one expects that $P_Y(\psi, b) \ll \lambda^{-1/2 + \varepsilon}$, which should follow by spectrally expanding $b$ and applying the convexity bound, and one likewise expects $P_Y(\psi, b) \ll \lambda^{-1 + \varepsilon}$ assuming the Lindel\"of hypothesis.
\medskip

When $b$ lies in a one-dimensional representation, then it is expected that $P_Y(\psi,b)\neq0$ implies that $\psi$ arises from a quadratic base change. To give an example where this is known, we now let $E$ be an imaginary quadratic field and allow $X$ to be a noncompact manifold obtained as an adelic quotient of the group ${\rm PGL}(2,E)$.  Let $\psi$ be a Hecke--Maass cusp form on $X$, and let $\{ \psi_i \}$ be an orthonormal basis of such cusp forms.  If $Y$ is the surface in $X$ arising from the subgroup ${\rm PGL}(2,\BQ)$ of ${\rm PGL}(2,E)$, then $P_Y(\psi, 1)$ is the Flicker--Rallis period of $\psi$.  If we assume that $\psi$ lies in an irreducible cuspidal automorphic representation $\pi$, then it is known, by e.g. \cite[Theorem 1]{ye1989kloosterman}, that $P_Y(\psi, 1)$ can be nonzero only when $\pi$ is the base change of a cuspidal automorphic representation $\sigma$ of ${\rm GL}(2,\BQ)$ with central character $\eta$, where $\eta$ is the quadratic character associated to $E/\BQ$.  Moreover, in this case, there will be a vector in $\pi$ whose Flicker--Rallis period is nonzero.

Such a distinction principle is expected to imply the existence of infinitely many forms with $P_Y(\psi, 1) \gg \lambda^{-1/2}$.  To explain why this should hold in the case of ${\rm PGL}(2,E)$ under discussion, let us assume the asymptotic (\ref{Zelditch's sum formula}) for the noncompact quotient $X$, which implies that
\begin{equation}
\label{sum}
\sum_{ |\lambda_i - \lambda | < Q} |P_Y(\psi_i, 1)|^2 \asymp 1
\end{equation}
for $Q$ sufficiently large.  One expects there to be $\asymp \lambda$ cusp forms $\psi_i$ with $| \lambda_i - \lambda | < Q$ that are base changes from ${\rm GL}(2,\BQ)$ (where the lower bound holds after possibly increasing the level of $X$), and so if we combine (\ref{sum}) with distinction for the Flicker--Rallis period, it would show the existence of forms in any window $| \lambda_i - \lambda | < Q$ with periods satisfying $P_Y(\psi_i, 1) \gg \lambda_i^{-1/2}$.

Moreover, when $\psi$ is obtained via base change, one might hope for a special value formula that gives $P_Y(\psi, b)$ in terms of $L$-functions evaluated at 1.  For instance, when $\psi$ is a form on ${\rm PGL}(2,E)$ obtained by base change from a representation $\sigma$ of ${\rm GL}(2,\BQ)$ with central character $\eta$ as above, the results of \cite[Section 5.1]{humphries2024new} and \cite{jiang2006periods} lead us to expect a special value formula for $P_Y(\psi, 1)$ of the asymptotic form 
$$P_Y(\psi, 1) \asymp \lambda^{-1/2} \sqrt{ L(1, \Ad \, \sigma \otimes \eta) / L(1, \Ad \, \sigma) },$$
either assuming some conditions on the level of $\psi$, or with local factors inserted at the ramified places.  The papers \cite[Lemma 5.1]{humphries2024new} and \cite[Theorem 1.2.1]{jiang2006periods} give a formula of this type in the case of ${\rm PGL}(2)$ over a {\it real} quadratic field, assuming that $\psi$ has full level (although potential generalizations are discussed in \cite[Remark 1.2.2]{jiang2006periods}).  Moreover, an analogous formula for ${\rm GL}(2)$ over general fields, but with a quadratic twist inserted into the period over $Y$, is proved in \cite[Proposition 3.2]{zhang2014automorphic}.  In any case, such a period formula, when combined with the formula $L(s, \Ad \pi) = L(s, \Ad \sigma) L(s, \Ad \sigma \otimes \eta)$ and the bounds $L(1, \Ad \, \pi) = \lambda^{o(1)}$ and $L(1, \Ad \, \sigma) = \lambda^{o(1)}$ from \cite[Lemma 3]{blomer2010twisted}, would give $P_Y(\psi, 1) = \lambda^{-1/2 + o(1)}$.
\medskip

We prove Theorem \ref{main} by applying arithmetic amplification. This method was introduced by Iwaniec and Sarnak in \cite{iwaniec1995norms}, which they used to deduce a stronger bound for the sup-norms of Hecke-Maass forms on congruence arithmetic hyperbolic surfaces. 
After \cite{iwaniec1995norms}, the method of arithmetic amplification is applied to bound the sup-norms of Hecke-Maass forms in different cases.
For instance, Blomer, Harcos, Maga and Mili{\'c}evi{\'c} solved the sup-norm problem for the group $\GL(2)$ over a general number field   in \cite{blomer2019sup}, which improves upon the previous results in \cite{iwaniec1995norms,koyama1995norms,blomer2016bounds}. For higher rank groups, see e.g \cite{blomer2016subconvexity}.

Marshall applied the method of amplification to the restriction problems for compact congruence arithmetic hyperbolic surfaces $S$ in \cite{marshall2016geodesic}. For any geodesic segment $l$ of unit length, he improved the $L^2$-restriction norm of $\psi|_l$ over the local bound by Burq, G{\'e}rard and Tzvetkov \cite{burq2007restrictions}, and improved the bounds for Fourier coefficients of $\psi$ along $l$. 
Using Waldspurger’s formula and $L$-functions, Ali \cite{ali20222} proved a stronger $L^2$-bound for a Hecke-Maass cusp form on the modular surface $\SL(2,\BZ)\backslash\BH^2$ restricted to closed geodesics associated to a fundamental discriminant $D>0$.

It is hoped to apply the amplification method to restriction problems on other groups. For instance, Marshall dealt with the case of restricting an $\SL(3)$ Hecke-Maass form to a maximal flat subspace in \cite{marshall2015restrictions}.
In this paper, we consider the restriction problem of the group $\SL(2,\BC)$ to the subgroup $\SL(2,\BR)$, which can be seen as a first step towards applying the amplification method for bounding periods on more general groups.

\medskip

Finally, in this paragraph, we explain the distinction and relationship between the period bound problem considered in this paper and the closely related problem on the $L^2$-restriction norm. Let $\psi$ be an $L^2$-normalized Hecke-Maass form on the compact congruence arithmetic hyperbolic $3$-manifold $X$ with spectral parameter $\lambda>0$ as before. For simplicity, we assume $Y$ to be a closed totally geodesic surface in $X$. The local bound for the $L^2$-restriction norm is shown by Burq, G{\'e}rard and Tzvetkov \cite{burq2007restrictions}, that is $\|\psi|_Y\|_{L^2(Y)}\ll\lambda^{1/4}$. We have the spectral decomposition $L^2(Y)= \bigoplus_i \BC\phi_i$, where $\phi_i$'s are Maass forms on $Y$ with spectral parameters $\mu_i$, which form an orthonormal basis of $L^2(Y)$. By the Plancherel's Theorem, the $L^2$-restriction norm can be expressed in terms of the periods:
\begin{align*}
    \|\psi|_Y\|_{L^2(Y)}^2=\sum_i |P_Y(\psi,\phi_i)|^2.
\end{align*}
The main contribution to the $L^2$ restriction norm should come from those $P_Y(\psi,\phi_i)$ where $\mu_i$ is near $\lambda$. We let $\beta$ be any positive number satisfying $\lambda^\epsilon\leq\beta\leq\lambda$. When $|\lambda - |\mu_i||\geq\beta$, we have
\begin{align}\label{local bd for Fourier coefficient}
    P_Y(\psi,\phi_i)\ll_\varepsilon \lambda^{1/4}\beta^{-1/4}.
\end{align}
This is \eqref{trivial bound} when $\mu_i\asymp 1$. Therefore, the result in this paper, which only improves \eqref{local bd for Fourier coefficient} when $\mu_i\asymp 1$, can not be applied to improve the $L^2$-restriction norm. In \cite{hou2024restrictions} considering the $L^2$-restriction problem, we prove the bound \eqref{local bd for Fourier coefficient} and prove a power saving over the local bound $\|\psi|_Y\|_{L^2(Y)}\ll\lambda^{1/4}$.

\subsection{Outline of the proof}
We will prove Theorem \ref{main} in a similar way as the proof of \cite[Theorem 1.3]{marshall2016geodesic}. 
We shall rewrite the period integral as a pairing of the eigenfunction with the smooth cutoff function $b\in C_c^\infty(\BH^2)$ integrated along $\BH^2$,
\begin{align*}
    \langle \psi,b\rangle=\int_{\BH^2} b(z)\psi(g_0.z) dz,
\end{align*}
where $b\in C_c^\infty(\BH^2)$ and $g_0$ is the isometry mapping the standard $\BH^2$ to the surface $Y$, after passing to the quotient.

To estimate $\langle\psi,b\rangle$, we let $h_\lambda$ be an even real-variable function that is non-negative on the spectral parameters and concentrates near $\pm\lambda$.
We define $k_\lambda$ to be the inverse Harish-Chandra transform for $\BH^3$ of $h_\lambda$.
Then $k_\lambda$ is an $\SU(2)$-bi-invariant test function. Let $X=\Gamma\backslash \BH^3$ be the arithmetic congruence hyperbolic $3$-manifold. We sum the test function over the lattice $\Gamma$ to form the kernel function
\begin{align*}
    K(x,y) = \sum_{\gamma\in\Gamma} k_\lambda(x^{-1}\gamma y)
\end{align*}
on $X\times X$. By the spectral expansion for $K$, we obtain the following pretrace formula
\begin{align}\label{pretrace formula in intro}
    \sum_{i} h_\lambda(\lambda_i) \psi_i(x)\overline{\psi_i(y)}=\sum_{\gamma\in\Gamma} k_\lambda(x^{-1}\gamma y).
\end{align}
Here $\psi_i$'s are Hecke-Maass forms on $X$ with spectral parameters $\lambda_i$, which form an orthonormal basis of $L^2(X)$ and $\psi$ is one of them. 

Let us first outline how to get the local bound $\langle\psi,b\rangle\ll1$ by integrating the pretrace formula. If we integrate \eqref{pretrace formula in intro} against $b\times \bar{b}$ along  $g_0\BH^2\times g_0 \BH^2$, we obtain
\begin{align}\label{integrated pretrace formula in intro}
    \sum_{i} h_\lambda(\lambda_i) |\langle \psi_i,b\rangle|^2=\sum_{\gamma\in\Gamma} \iint_{\BH^2}b(z_1)\overline{b(z_2)} k_{\lambda}(z_1^{-1}g_0^{-1}\gamma g_0z_2) dz_1 dz_2.
\end{align}
By dropping all terms on the spectral side (left-hand side of \eqref{integrated pretrace formula in intro}) except the term $|\langle \psi,b\rangle|^2$, it suffices to bound the integrals
\begin{align*}
    I(\lambda,g) :=\iint_{\BH^2}b(z_1)\overline{b(z_2)} k_{\lambda}(z_1^{-1} g z_2) dz_1 dz_2,\quad\text{ for } g\in \SL(2,\BC).
\end{align*}
We may assume that $\Gamma$ is torsion-free and the supports of $b$ and $k_\lambda$ are sufficiently small so that only $I(\lambda,e)$ contributes to the geometric side (right-hand side of \eqref{integrated pretrace formula in intro}). Therefore, the bound $I(\lambda,e)\ll1$ from Proposition \ref{bound of I} will imply the local bound $\langle\psi,b\rangle\ll1$.

We now give an outline of the amplification method. An amplifier $\cT$ is a sum of Hecke operators, which will be constructed in \eqref{amplifier 1}.
The amplifier $\cT$ is chosen so that its eigenvalue on $\psi$ is large, and it is expected to have small eigenvalues on the remaining $\psi_i$, which helps us to prove a power saving.
We apply the amplifier $\cT$ and then integrate \eqref{pretrace formula in intro} against $b\times \bar{b}$ along  $g_0\BH^2\times g_0 \BH^2$ to obtain
\begin{align}\label{ampl ineq in intro}
    |\langle \cT\psi,b\rangle|^2\ll \sum_{i} h_\lambda(\lambda_i) |\langle \cT\psi_i,b\rangle|^2=\sum_{\gamma} C(\gamma)I(\lambda,g_0^{-1}\gamma g_0).
\end{align}
The geometric side is a weighted sum of $I(\lambda,g_0^{-1}\gamma g_0)$ for $\gamma$ running through the translations appearing in $\cT*\cT^*$ with coefficients $C(\gamma)$ determined by $\cT$. The amplification inequality \eqref{ampl ineq in intro} is proved in Proposition \ref{amplification inequality}.
To apply \eqref{ampl ineq in intro}, we must bound the geometric side, which requires solving a counting problem (Proposition \ref{point counting prop}) and a quantitative control on the decay of $I(\lambda,g)$ when $g$ is (in a certain sense) away from stabilizing $\BH^2$ (Proposition \ref{bound of I}). By combining these two ingredients, we prove Theorem \ref{main} in Section \ref{sec bd for period}.

The counting problem is estimating the number of times a Hecke operator maps $\BH^2$ back close to itself. We prove this Hecke return estimate in Section \ref{counting}. We consider a 4-dimensional representation of $\SL(2,\BC)$, coming from the standard representation of $\mathrm{SO}(3,1)$. It will be shown in Lemma \ref{fixing a vector} that if $\gamma$ maps $\BH^2$ back close enough to itself then $\gamma$ fixes a rational line in the representation. We can get a contradiction from our construction of the Hecke operators appearing in the amplifier to conclude that no such $\gamma$ exists.

The analytic problem for bounding $I(\lambda,g)$ in Proposition \ref{bound of I} is the main part of this paper. The proof occupies Section \ref{integral}. We give a more detailed outline in Section \ref{outline of the section}. We unfold the integral $I(\lambda,g)$ using the inverse Harish-Chandra transform and the integral representation \eqref{eq:HC K int formula} for the spherical function. This leads to bound an oscillatory integral over $\SU(2)\times\BH^2\times g\BH^2$. The estimate of the oscillatory integral is based on the method of stationary phase.



\subsection{Acknowledgements}
The author would like to thank his advisor Simon Marshall for suggesting this
problem and providing helpful advice about this paper.  He would like to thank Peter Humphries, Emmett Wyman, and Cheng Zhang for helpful comments. He would also like to thank the referee for a careful reading and many helpful comments. The author was supported by NSF grants DMS-1902173 and DMS-1954479.

\section{Notation}\label{notation}
Throughout the paper, the notation $A\ll B$ will mean that there is a positive constant $C$
such that $|A| \leq C B$, and $A\asymp B$ will mean that there are positive constants $C_1$ and $C_2$ such
that $C_1B \leq A \leq C_2B$.

\subsection{Quaternion algebras and adelic groups}
Let $F$ be a totally real number field with the ring of integers $\cO$. We denote the norm of an ideal $\fn$ of $\cO$ by $\operatorname{N}(\fn)$. If $v$ is a finite place of $F$, $F_v$ is the local field of $F$ at $v$ with the ring of integers $\cO_v$, $\varpi_v$ and $q_v$ denote a uniformizer and the order of the residue field. Let $E$ be a quadratic extension of $F$ with exactly one complex place $w_0$, and let $v_0$ be the place below $w_0$. Let $\cO_E$ be the ring of integers of $E$. We denote by $\operatorname{N}_E$ the norm of ideals in $\cO_E$. We let $\bar{\cdot}$ denote the conjugation of $E$ over $F$, and denote the trace and norm from $E$ to $F$ by $\operatorname{Tr}_{E/F}$ and $\operatorname{N}_{E/F}$, respectively. We denote the rings of adeles of $F$ and $E$ by $\BA$ and $\BA_E$, and the ring of finite adeles of $F$ by $\BA_f$.
Let $|\cdot|_v$ be the absolute value on $F_v$ for any place $v$ of $F$, and let $|\cdot| = \prod_v |\cdot|_v$ be the modulus on $\BA$.

Let $D$ be a quaternion division algebra over $F$ that is ramified at every real place except at $v_0$, and let $B = D\otimes_F E$. We assume that $B$ is also a division algebra. We denote the standard involution on $D$ and $B$ by $\iota$, and also use $\bar{\cdot}$ to denote the conjugation of $B$ over $D$. Let $\operatorname{trd}_B(x) = x+\iota(x)$ and $\operatorname{nrd}_B(x)=x\iota(x)$ denote the reduced trace and the reduced norm on $B$. We let $D_0$ denote the $-1$-eigenspace for $\iota$ acting on $D$. Choose a nonzero element $\eta\in E$ with $\operatorname{Tr}_{E/F}(\eta)=0$, and define the $F$-subspace $V$ of $B$ by
\begin{align*}
    V = \eta D_0+F= \{ x\in B|\bar{x}=\iota(x)\}.
\end{align*}
Then it may be checked that $V$ is stable under the action of $B$ given by $b\cdot x = bx\overline{\iota(b)}$. Hence, $V$ is an $F$-linear representation of $B^\times$ that we denote by $\rho$. We define a $F$-valued bilinear form $\langle\cdot,\cdot\rangle$ on $V$ by $\langle x,y\rangle = \operatorname{Tr}_{E/F}\left(\operatorname{trd}_B\left(x\iota(y)\right)\right)$, which is nondegenerate and satisfies $\langle\rho(b)x,\rho(b)y \rangle = \operatorname{N}_{E/F}(\operatorname{nrd}_B(b))\langle x,y\rangle$. Moreover, $\langle \cdot,\cdot\rangle$ has signature $(3,1)$ at the place $v_0$.

We denote the groups of reduced norm $1$ elements in $D$ and $B$ by $D^1$ and $B^1$, and denote the multiplicative group by $D^\times$ and $B^\times$. We let $\bG$ and $\bH$ be the algebraic groups over $F$ such that $\bG(F) = B^1$, $\bH(F) = D^1$.
We denote $D\otimes_F F_v$, $B\otimes_F F_v$,  $\bG(F_v)$ and $\bH(F_v)$  by $D_v$, $B_v$, $G_v$ and $H_v$. We still denote by $\rho$ the representations of $\bG$ and $\bH$ obtained by restricting the action of $B$ on $V$, which is algebraic over $F$.

If $\bv_1$ is the vector in $V$ corresponding to $1\in B$, it may be seen that $\bH$ is the stabilizer of $\bv_1$ in $\bG$, through the representation $\rho$.
At $v_0$, we have $G_{v_0} \simeq \SL(2,\BC)$, $H_{v_0}\simeq \SL(2,\BR)$, and $V_{v_0} = V \otimes_F F_{v_0}$, where
\begin{align*}
    V_{v_0} \simeq \left.\left\{ \begin{pmatrix}
        a&b\\c&d
    \end{pmatrix} \in M_2(\BC) \right|   d = \bar{a},\text{ and }b,c\in i\BR \right\}
\end{align*}
with
\begin{align}\label{representation rho}
    \rho(g) A = g A \bar{g}^{-1}, \quad g\in\SL(2,\BC)\text{ and }A\in V_{v_0}.
\end{align}
Hence, the complexification $V_{v_0}\otimes\BC$ is the representation of $\SL(2,\BC)$ acting on the vector space of $2$-by-$2$ matrices $M_2(\BC)$ given by the same formula as (\ref{representation rho}). We will implicitly make these identifications later.
Alternatively, the representation $(\rho,V_{v_0})$ is isomorphic to the representation of $\SL(2,\BC)$ obtained by composing the quotient map to $\SO(3,1)^\circ$ with the standard representation of $\SO(3,1)$.
Here $\SO(3,1)^\circ$ is the identity component of the Lie group $\SO(3,1)$. For simplicity, we denote $G_0=G_{v_0}$, $H_0 = H_{v_0}$,  $K_0=K_{v_0}$ and $V_0 =V_{v_0}$.

Let $H^\prime$ be the normalizer of $H_0$ in $G_0$, which has two connected components with the identity component $H_0$. In fact,
\begin{align*}
    H^\prime = H_0 \cup \begin{pmatrix}0&i\\i&0\end{pmatrix} H_0. 
\end{align*}
If we consider the representation $(\rho,V_{v_0})$ of $G_0$, then $H^\prime$ is the subgroup of $G_0$ fixing the line spanned by the vector $\bv_1$.
Moreover, the identity component $H_0$ of $H^\prime$ acts on $\bv_1$ trivially, and the action of the non-identity component on $\bv_1$ is by multiplying $-1$.

Let $\cO_B\subset B$ be a maximal $\cO$ order, and let $S$ be a finite set of places of $F$ containing all infinite places and all places where $D$ ramifies. We choose a compact subgroup $K = \prod_v K_v$ of $\bG(\BA)$ as follows.
We choose $K_{v_0} = \SU(2)$, and $K_v = B^1_v$ for all other real places $v$.
For finite places in $S$ we let $K_v\subset G_v$ be any subgroup that stabilizes $\cO_{B,v} = \cO_B \otimes _\cO \cO_v$, and for other finite places we let $K_v = B^1_v\cap \cO_{B,v}$. Let $\cL = V \cap \cO_B$ be a lattice in $V$, where we identify $V$ as a subspace of $B$.
After enlarging $S$ also to include all places of $F$ that are ramified in $E$, we may choose an isomorphism $\alpha$ from $B_v$ to the product of two $2$ by $2$ matrix algebras $M_2(F_v)\times M_2(F_v)$ for all $v\notin S$ that is split in $E$.  The isomorphism $\alpha$ satisfies: if $v$ splits,
\begin{itemize}
    \item $\alpha(D_v) = \{ (T,T)|T\in M_2(F_v) \}$, 
    \item $\alpha(V_v)=\{ (T,\iota(T))|T\in M_2(F_v) \}$,
    \item $\alpha(K_v) = \SL(2,\cO_v)\times\SL(2,\cO_v)$,
    \item $\alpha(\cL_v)=\{ (T,\iota(T))|T\in M_2(\cO_v) \}$,
\end{itemize}
and, moreover, here the conjugation $\bar{\cdot}$ is identified with the map switching the two factors.
Let $\sP$ be the set of primes $v\notin S$ that split in $E$. We shall implicitly make the identification $\alpha$ at places in $\sP$. For $v\in \sP$, we write $G_v = \SL(2,F_v)\times\SL(2,F_v)$, and  $K_v = K_{v,1}\times K_{v,2} =\SL(2,\cO_v)\times\SL(2,\cO_v) $.

\subsection{Lie groups and algebras}

Let $A$ be the connected component of the diagonal subgroup of $\mathrm{SL}(2,\BR)$ with parametrization
\begin{align*}
    a(t) = \begin{pmatrix}e^{t/2} &0 \\0 &e^{-t/2} \end{pmatrix}, \quad t\in\BR.
\end{align*}
Let
\begin{align*}
    N = \left\{ \begin{pmatrix}1 &z \\0 &1 \end{pmatrix} \Big| z\in\BC \right\}\quad \text{and}\quad N_0 = \left\{ \begin{pmatrix}1 &x \\0 &1 \end{pmatrix} \Big| x\in\BR \right\}
\end{align*}
be the unipotent subgroups.
We denote the Lie algebras of $K_{0}$, $A$, $N$ and $N_0$ by $\fk$, $\fa$, $\fn$ and $\fn_0$. We write the Iwasawa decomposition of $G_0 = NAK_0$ as
\begin{align*}
    g = n(g)\exp\left(A(g)\right) \kappa(g) = \exp\left( N(g)\right)\exp\left(A(g)\right) \kappa(g).
\end{align*}
We define
\begin{align*}
    H = \begin{pmatrix}1/2 &0 \\0 &-1/2 \end{pmatrix} \in \fa,\quad X = \begin{pmatrix}0 &1 \\0 &0 \end{pmatrix} \in\fn.
\end{align*}
We identity $\fa$ with the real line $\BR$ under the map $H\mapsto 1$ and consider $A(g)$ as a function $A:G_0\to\BR$ under this identification, and we obtain the identifications $\fn\simeq \BC$ and $\fn_0\simeq\BR$ by sending $X$ to $1$. We identify the dual space $\fa^*$ of $\fa$ as $\BR$ by sending the root $tH\mapsto t$ to $1$. Under these identifications, the pairing between $\fa$ and $\fa^*$ is the multiplication in $\BR$.
The parametrization of $A$ can be written as the homomorphism $a:\BR \rightarrow A$ by $a(t) = \exp(tH)$. We define $n:\BC \to N$ by $n(z) = \exp(zX)$. The restriction of $n$ to $\BR\to N_0$ will still be denoted by $n$. We denote by $\fa^+ \simeq \BR_{>0}$ the positive Weyl chamber, and $A^+$ the image $\exp(\fa^+) = a(\BR_{>0})$. We denote the diagonal subgroup of $K_0$ by $\mathrm{U}(1)$, i.e.,
\begin{align}\label{defn of U(1)}
     \mathrm{U}(1)=\left. \left\{ \begin{pmatrix}
    e^{it}&\\&e^{-it}
\end{pmatrix} \right|\,t\in\BR\right\}.
\end{align}
Moreover, $\mathrm{U}(1)$ is the centralizer of $A$ in $K_0$.

We equip $\mathfrak{sl}(2,\BC)$ with the norm 
\begin{align*}
    \|\cdot \|:\begin{pmatrix}Z_1&Z_2\\Z_3&-Z_1\end{pmatrix}\mapsto \left(\|Z_1\|^2+\|Z_2\|^2+\|Z_3\|^2 \right)^{1/2}.
\end{align*}
This norm induces a left-invariant metric on $G_0$, denoted by $d$.

We define a Haar measure $dg$ on $G_0$ through the Iwasawa decomposition $G_0=NAK_0$. Namely, if $g=n(z)a(t)k$ then $dg=e^{-2t}dzdtdk$. Here $dz,dt$ are the standard measures on $\BC$ and $\BR$ as Euclidean spaces, and $dk$ is the probability Haar measure on $K_0$.

\subsection{Hecke algebras}
For any continuous function $f$ on $\bG(\BA)$, we define $f^*(g) = \overline{f(g^{-1})}$. We define $\cH_f = \bigotimes_{v<\infty}^\prime \cH_v$ to be the convolution algebra of smooth functions on $\bG(\BA_f)$ that are compactly supported and bi-invariant under $K_f = \prod_{v<\infty}^\prime K_v$, and $\cH_v$ denote the space of smooth, compactly supported functions on $G_v$ that are bi-invariant under $K_v$. 
If $v\in\sP$ and $a_1,a_2\in\BZ$, we define $K_v(a_1,a_2)$ to be the double coset
\begin{align*}
    K_v(a_1,a_2) = K_{v,1}(a_1)\times K_{v,2}(a_2), 
\end{align*}
where $K_{v,i}(a_i) = K_{v,i}\begin{pmatrix}\varpi_v^{a_i}&\\&\varpi_v^{-a_i}\end{pmatrix}K_{v,i}$.
We let
$T_v(a)$ be the characteristic function of $K_v(a,0)$. 
Given an ideal $\fn\subset\cO$, suppose that $\fn$ is only divisible by prime ideals in $\sP$.
We define the double coset in $\bG(\BA_f)$
\begin{align*}
    K(\fn) = \prod_{v\in\sP} K_v(\operatorname{ord}_{v}(\fn),0) \times \prod_{v\notin\sP} K_v.
\end{align*}
In this paper the notation $K(\fn)$ is used non-standardly. We use this notation to denote the above Hecke double coset but not a principal congruence subgroup of $K$.
The action of $\phi\in\cH_f$ on an automorphic function $f$ on $\bG(F)\backslash \bG(\BA)$ is given by the right regular action
\begin{align*}
    [\phi f](x) = \int_{\bG(\BA_f)}\phi(g)f(xg) dg.
\end{align*}
Here, we use the Haar measures $dg_v$ on $G_v$, which are normalized so that $K_v$ has unit volume.

\subsection{Arithmetic manifolds and Hecke-Maass forms}

Define $X = \bG(F)\backslash\bG(\BA)/ K$, which is a compact connected hyperbolic $3$-manifold. We let $\Omega = \prod_{v}\Omega_v\subset\bG(\BA)$ be a compact set containing a fundamental domain for $\bG(F)\backslash \bG(\BA)$.
The universal cover of $X$ is the hyperbolic $3$-space $\BH^3$, which can be identified with the quotient $G_0/ K_{0} = \SL(2,\BC)/\SU(2)$.
We will use the upper half 3-space model of $\BH^3$ and the upper half plane model of $\BH^2$, i.e., 
\begin{align*}
    \BH^3 = \{(z,t) \,|\, z\in\BC,\, t\in\BR_{>0} \}\quad\text{and}\quad\BH^2 = \{(x,t)\,|\,x\in\BR,\,t\in\BR_{>0} \}.
\end{align*}
The embedding $\BH^2\subset\BH^3$ is identified with the natural embedding $\SL(2,\BR)/\SO(2)\subset\SL(2,\BC)/\SU(2)$. For $g =  \begin{pmatrix}a &b \\c &d \end{pmatrix} \in  \SL(2,\BC)$ and $(z,t)\in\BH^3$, we have the following formula:
\begin{align}\label{Poincare}
    g.(z,t) = \left(\frac{(az+b)\overline{(cz+d)}+a\overline{c}t^2}{|cz+d|^2+|c|^2t^2},\frac{t}{|cz+d|^2+|c|^2t^2} \right).
\end{align}
Note that $K_0$ is the stabilizer of $o=(0,1)$ in $G_0$ and $H^\prime$ is the stabilzer of $\BH^2$ in $G_0$.
We denote by 
\begin{align}\label{defn of l}
    l=A.o=\{(0,t)\in\BH^3\,|\, t>0 \}
\end{align}
the vertical geodesic through $o$ in $\BH^3$. When $g\in\SL(2,\BC)$ is acting on sets, e.g. $l$, $\BH^2$, we will simply write them as $gl$, $g\BH^2$.

By a \textit{Hecke-Maass form} on $X$ we mean an eigenfunction of the Laplacian $\Delta$ and the Hecke algebras $\cH_v$ for all $v\notin S$. In fact, the proof of Theorem \ref{main} only uses Hecke operators over places in $\sP$. We let $\psi\in L^2(X)$ be a Hecke-Maass form and let $\lambda$ be its spectral parameter, so that
\begin{align*}
    \Delta\psi+(1+\lambda^2)\psi=0.
\end{align*}
We assume that $\|\psi\|_2=1$ with respect to the hyperbolic volume on $X$ and $\lambda>0$. Note that because $\Delta$ and $T_v\in\cH_v$, $v\notin S$, are self-adjoint, we may assume that $\psi$ is real-valued.

\subsection{Harish-Chandra transforms}
For $s\in\BC$ and $x\in\BH^3$, we denote by $\varphi_s(x)$ the spherical function on $\BH^3$ with the spectral parameter $s$.
We shall also think of $\varphi_s$ as a $K_0$-bi-invariant function on $G_0$ by $\varphi_s(g)=\varphi_s(g.o)$ for $g\in G_0$.
From e.g. \cite[Ch. \RNum{4}, Theorem 4.3]{helgason1984groups}, we have the following integral formula for the spherical function:
\begin{align}\label{eq:HC K int formula}
    \varphi_s(g)=\int_{K_0}\exp({(1+is)A(kg)})dk,\quad g\in G_0.
\end{align}
Let $f\in C_c^\infty(\BH^3)$ be left $K_0$-invariant. Its Harish-Chandra transform $\widehat{f}$ is defined by the integral:
\begin{align*}
    \widehat{f}(s)=\int_{G_0} f(g.o)\varphi_{-s}(g) dg,\quad s\in\BC.
\end{align*}
We denote by $d\mu(s)$ the Plancherel measure for $\BH^3$ so that the inversion formula holds. Namely,
\begin{align}\label{eq: HC inversion}
    f(x)&=\frac{1}{2}\int_{-\infty}^\infty \widehat{f}(s)\varphi_s(x)d\mu(s)=\int_{0}^\infty \widehat{f}(s)\varphi_s(x)d\mu(s),\quad x\in\BH^3.
\end{align}
By the formula of Gindikin-Karpelevic \cite[Ch. \RNum{4}, Theorem 6.14]{helgason1984groups}, it may be seen that $d\mu(s)/ds=c\cdot s^2$ for some nonzero constant $c$.

\section{Amplification}\label{section of amplify}

This section gives an amplification inequality that we derive from the pretrace formula. We will obtain our bound on periods after stating the estimates of Hecke returns proven in Section \ref{counting} and the estimates of oscillatory integrals established in Section \ref{integral}.

Let $g_0\in \Omega_{v_0}$, and $b\in C_c^\infty(\BR^2)$. We shall study the integral
\begin{align*}
    \langle \psi,b\rangle=\int_{\BR^2} b(x,t)\psi(g_0n(x)a(t)) e^{-t}dxdt.
\end{align*}
Note that $e^{-t}dxdt$ is the measure associated to the hyperbolic metric on $\BH^2$ after the change of variables from $\BR^2$ to $\BH^2$ by $(x,t)\mapsto (x,e^t)$.
By using a partition of unity on $Y$, to prove Theorem \ref{main}, it suffices to prove
\begin{align}\label{main'}
    \langle \psi,b\rangle \ll_\varepsilon \lambda^{-1/74+\varepsilon},
\end{align}
with the implied constant depending on $X$, the support of $b$ and the $L^\infty$-norms of $b$ and finitely many of its derivatives.

\subsection{An amplification inequality and amplifiers}
We fix a real-valued function $h\in C^\infty(\BR)$ of Paley-Wiener type that is nonnegative and satisfies $h(0)=1$. Define $h_\lambda^0(s) = h(s-\lambda)+h(-s-\lambda)$, and let $k_\lambda^0$ be the $K_0$-bi-invariant function on $\BH^3$ with the Harish-Chandra transform $h^0_\lambda$. The Paley-Wiener theorem \cite[Ch. \RNum{4}, Theorem 7.1]{helgason1984groups} implies that $k_\lambda^0$ is of compact support that may be chosen arbitrarily small. Define $k_\lambda = k_\lambda^0*k_\lambda^0$, which has the Harish-Chandra  transform $h_\lambda = (h_\lambda^0)^2$. If $g\in G_{0}$, we define
\begin{align*}
    I(\lambda,g) = \int_{\BR^4}b(x_1,t_1)\overline{b(x_2,t_2)} k_{\lambda}(a(-t_1)n(-x_1)gn(x_2)a(t_2)) e^{-t_1-t_2}   dx_1 dt_1 dx_2  dt_2.
\end{align*}
We are also free to shrink the support of $b$ because we are only interested in upper bounds. Hence, we can assume that the supports of $b$ and $k_\lambda$ are small enough so that $I(\lambda,g)=0$ unless $d(g,e)\leq 1$, and denote this compact subset by $\cB\subset G_{0}$. 
The inequality that we shall use is the following.
\begin{proposition}\label{amplification inequality}
    Suppose $\cT\in\bigotimes_{v\notin S}^\prime \cH_v$. We have
    \begin{align}\label{amplification inequality eqn}
        \left| \langle \cT\psi,b\rangle\right|^2\ll \sum_{\gamma\in \bG(F)} \left| [\cT*\cT^*](\gamma)I(\lambda,g_0^{-1}\gamma g_0) \right|.
    \end{align}
\end{proposition}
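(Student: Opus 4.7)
The plan is to derive (\ref{amplification inequality eqn}) from the amplified pre-trace formula applied to the bi-$K$-invariant test function $\Phi = k_\lambda \otimes (\cT * \cT^*)$ on $\bG(\BA)$, with $k_\lambda$ at the archimedean place $v_0$ and $\cT * \cT^*$ at the finite places. Using that each $\psi_i$ is a joint Hecke eigenfunction, that $\cT^*$ acts as the adjoint of $\cT$, and that $k_\lambda$ acts on a spherical vector of spectral parameter $\lambda_i$ as the scalar $h_\lambda(\lambda_i)$, the pre-trace formula yields
\begin{align*}
\sum_{\gamma \in \bG(F)} \Phi(x^{-1}\gamma y) \;=\; \sum_i h_\lambda(\lambda_i)\,|\hat\cT(\psi_i)|^2\,\psi_i(x)\overline{\psi_i(y)},
\end{align*}
where $\{\psi_i\}$ is an orthonormal basis of Hecke--Maass forms on $X$ and $\hat\cT(\psi_i)$ is the Hecke eigenvalue. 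Every spectral coefficient $h_\lambda(\lambda_i)\,|\hat\cT(\psi_i)|^2$ is non-negative.

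Next I would specialize $x = g_0 z_1$, $y = g_0 z_2$ and integrate against $b(z_1)\overline{b(z_2)}$ with measure $e^{-t_1-t_2}dx_1\,dt_1\,dx_2\,dt_2$ over $\BH^2 \times \BH^2$. Using that $\psi_i$ is real-valued, the spectral side becomes
\begin{align*}
\sum_i h_\lambda(\lambda_i)\,|\hat\cT(\psi_i)|^2\,|\langle\psi_i, b\rangle|^2.
\end{align*}
Keeping only the $\psi_i = \psi$ term and noting that $h_\lambda(\lambda) = (h(0) + h(-2\lambda))^2 \geq 1$ (since $h \geq 0$ and $h(0) = 1$), together with $\hat\cT(\psi)\langle\psi, b\rangle = \langle\cT\psi, b\rangle$, produces the lower bound $|\langle\cT\psi, b\rangle|^2$ on the spectral side.

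On the geometric side, since $z_1$, $z_2$, $g_0$ are all supported only at $v_0$, the archimedean component of $z_1^{-1} g_0^{-1}\gamma g_0 z_2$ feeds into $k_\lambda$ while the finite-adelic component is simply $\gamma_f$, so the $\gamma$-summand of $\Phi$ factors as $(\cT * \cT^*)(\gamma)\cdot k_\lambda(z_1^{-1} g_0^{-1}\gamma g_0 z_2)$. Integrating against $b(z_1)\overline{b(z_2)}$ then produces precisely $(\cT * \cT^*)(\gamma)\,I(\lambda, g_0^{-1}\gamma g_0)$ by definition of $I$, and bounding the resulting sum above by the sum of absolute values yields (\ref{amplification inequality eqn}). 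The argument is largely bookkeeping: compact support of $\Phi$ legitimizes the interchange of summation and integration, and the only conceptual point is assembling the adelic pre-trace formula so that $\cT * \cT^*$ contributes the positive factor $|\hat\cT(\psi_i)|^2$ on the spectral side.
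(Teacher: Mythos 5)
Your proposal is correct and follows essentially the same route as the paper: build the kernel from the test function $k_\lambda$ at $v_0$ times $\cT*\cT^*$ at the finite places, expand spectrally with non-negative coefficients $h_\lambda(\lambda_i)$, integrate against $b\times\overline{b}$ over $g_0\BH^2\times g_0\BH^2$, drop all terms except $\psi$, and factor the geometric side into $(\cT*\cT^*)(\gamma)\,I(\lambda,g_0^{-1}\gamma g_0)$. The only cosmetic difference is that you carry the Hecke eigenvalue $|\hat\cT(\psi_i)|^2$ explicitly (and spell out $h_\lambda(\lambda)\geq 1$), whereas the paper keeps the amplified coefficient in the form $h_\lambda(\lambda_i)\,|\langle\cT\psi_i,b\rangle|^2$; these are equivalent since the $\psi_i$ are joint Hecke eigenforms.
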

\begin{proof}
    Consider the function 
    \begin{align*}
        K(x,y)= \sum_{\gamma\in\bG(F)} k_\infty [\cT*\cT^*](x^{-1}\gamma y)
    \end{align*}
    on $\bG(F)\backslash\bG(\BA)\times\bG(F)\backslash\bG(\BA)$, where $k_\infty$ is a compactly supported and $K_\infty$-bi-invariant function on $G_\infty$ defined by $k_\infty(x_\infty) = k_\lambda(x_{v_0})$. The spectral decomposition of $L^2(X)$ is
    \begin{align*}
        L^2(X) = \bigoplus_i \BC\psi_i,
    \end{align*}
    where $\psi_i$'s are Hecke-Maass forms on $X$ with spectral parameters $\lambda_i$, which form an orthonormal basis of $L^2(X)$ and $\psi$ is one of them. Then by \cite{selberg1956harmonic}, the integral operator acts on Hecke-Maass forms $\psi_i$ as
    \begin{align*}
        \int_{\bG(F)\backslash\bG(\BA)} K(x,y)\psi_i(y) dy &= \int_{\bG(\BA)} k_\infty[\cT*\cT^*](x^{-1}y)\psi_i(y)dy = h_\lambda(\lambda_i)[{\cT^*} \cT\psi_i](x).
    \end{align*}
    Hence, $K(x,y)$ has a spectral expansion
    \begin{align*}
        K(x,y) = \sum_i h_\lambda(\lambda_i)[\cT\psi_i](x)\overline{[\cT\psi_i](y)}.
    \end{align*}
    If we integrate it against the cutoff function on $g_0\SL(2,\BR) \times g_0\SL(2,\BR)$, we obtain
    \begin{align*}
         \sum_i h_\lambda(\lambda_i)\left| \langle \cT\psi,b\rangle\right|^2&=\int_{\SL(2,\BR)}\int_{\SL(2,\BR)} b(x)\overline{b(y)}K(g_0x,g_0y) dxdy \\ &=\sum_{\gamma\in \bG(F)} [\cT*\cT^*](\gamma)\int_{\SL(2,\BR)}\int_{\SL(2,\BR)} b(x)\overline{b(y)}k_\lambda(x^{-1}g_0^{-1}\gamma g_0y) dxdy. 
    \end{align*}
    Since we have $h_\lambda(\lambda_i)\geq 0 $ for all $i$, dropping all terms but $\psi$ completes the proof.
\end{proof}

To apply the above inequality, we need first to construct an element $\cT_v\in\cH_v$ that will form part of the amplifier $\cT$,
where $v\in\sP$ is a split finite place. 
Note that the Hecke operator $T_v(1)$ (resp. $T_v(2)$) corresponds to the operator summing over the set of nodes at distance 2 (resp. 4) from the given node in the Bruhat-Tits tree for $\mathrm{PGL}(2,F_v)$.
Then an elementary computation gives the relations in $\cH_v$:
\begin{align}\label{Hecke relation}
    T_v(1)*T_v(1) = q_v(q_v+1) +(q_v-1)T_v(1)+ T_v(2).
\end{align}
Note that we have identified $K_v$ with $\SL(2,\cO_v)\times\SL(2,\cO_v)$, and $T_v(1)$ only sees the first coordinate, so it may be seen that \eqref{Hecke relation} can also be obtained by applying the classical relation \cite[\S 1.4, (4.12)]{bump1998automorphic}.
If we define the real numbers $\tau_v(1)$ and $\tau_v(2)$ by
\begin{align*}
    T_{v}(1)\psi = \tau_v(1)q_v \psi,\quad T_{v}(2)\psi = \tau_v(2)q_v^2\psi,
\end{align*}
then (\ref{Hecke relation}) implies that we cannot have both $|\tau_v(1)|\leq 1/4$ and $|\tau_v(2)|\leq1/4$.
We define
\begin{align}\label{amplifier 1}
    T_v = \begin{cases} {T_{v}(1)}/\tau_{v}(1)q_v\quad\text{ if }|\tau_{v}(1)|>1/4,\\
    {T_{v}(2)}/\tau_{v}(2)q_v^2\quad\text{ otherwise. }
    \end{cases}
\end{align}
It follows that $T_v\psi = \psi$ for all $v\notin S$.
Note that $|K_v(1,0)/K_v|\asymp q_v^2$ and $|K_v(2,0)/K_v|\asymp q_v^4$, so $\|T_v(1)\|_{L^1}=\|T_v(1)\|_{L^2}^2\asymp q_v^2$ and $\|T_v(2)\|_{L^1}=\|T_v(2)\|_{L^2}^2\asymp q_v^4$. Hence,
\begin{align}\label{eq: L1 bd for Tv}
    \|T_v\|_{L^1}\ll q_v^2,
\end{align}
and
\begin{align}\label{eq: L2 bd for Tv}
    \|T_v\|_{L^2}\ll 1.
\end{align}

\subsection{Bounds for periods}\label{sec bd for period}

We let $g\in\Omega_{v_0}$,  $\fn\subset\cO$ and $\delta>0$, and suppose that $\fn$ is only divisible by prime ideals in $\sP$. 
We define the set 
\begin{align*}
    M(g,\delta,\fn) = \left\{ \gamma\in\bG(F)\cap K(\fn) \,|\, d(g^{-1}\gamma g,e)\leq 1, d(g^{-1}\gamma g,H^\prime)\leq \delta\right\}.
\end{align*}
The cardinality of $M(g,\delta,\fn)$ describes how many times the Hecke operators map $g\BH^2$ close to itself. We can control the size of $M(g,\delta,\fn)$ as follows.
\begin{proposition}\label{point counting prop}
    There exists a constant $C>0$ with the following property. Let $g\in\Omega_{v_0}$ and $X>0$. There exist a set $\sP_0\subset\sP$, depending on $g$ and with $\#(\sP\backslash\sP_0)\ll\log X$, and with the following property. If $\delta<CX^{-8}$, and the ideal $\fn$ is divisible only by primes in $\sP_0$, and satisfies $\operatorname{N}(\fn)<X$, $\fn\neq\cO$, then $M(g,\delta,\fn) = \emptyset$.
\end{proposition}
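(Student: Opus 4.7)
The proof will proceed by contradiction: suppose $\gamma\in M(g,\delta,\fn)$ with $\fn\neq\cO$. Since $H'=\{h\in G_0:\rho(h)\bv_1=\pm\bv_1\}$, the condition $d(g^{-1}\gamma g,H')\leq\delta$ becomes
\begin{align*}
\|\rho(\gamma)v_g-\epsilon v_g\|_{v_0}\ll\delta,\quad v_g:=\rho(g)\bv_1=g\bar g^{-1}\in V_{v_0},\ \epsilon\in\{\pm 1\},
\end{align*}
which in matrix form at $v_0$ reads $\gamma v_g=\epsilon v_g\bar\gamma+O(\delta)$. The bound $d(g^{-1}\gamma g,e)\leq 1$ controls $\gamma$ at $v_0$, and compactness of $\bG(F_v)\simeq\SU(2)\times\SU(2)$ at each real place $v\neq v_0$ (which holds because $D$ ramifies at all real places except $v_0$ and $E$ splits over each such place) controls $\gamma$ at every other archimedean place. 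The condition $\gamma\in K(\fn)$ bounds denominators of rational invariants of $\gamma$ at finite places by powers of $\operatorname{N}\fn<X$.

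The strategy is a cascade of Liouville/product-formula arguments, each producing an $F$- or $E$-rational invariant of $\gamma$ that must vanish. Each invariant is constructed to be small at $v_0$ (a suitable power of $\delta$), bounded at the other archimedean places, and of polynomial denominator in $\operatorname{N}\fn$ at finite places. The simplest comes from the reduced trace: elements of $H'$ have trace in $\BR\cup i\BR$, so for the identity component of $H'$ the anti-invariant combination $\theta:=\operatorname{tr}_B(\gamma)-\iota(\operatorname{tr}_B(\gamma))\in E$ has $|\theta|_{v_0}\ll\delta$ (and the $\iota$-invariant analogue handles the other component); the product formula then forces $\theta=0$ once $\delta\ll X^{-c_1}$ for a modest $c_1$, yielding $\operatorname{tr}_B(\gamma)\in F$. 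Further rational invariants are extracted by pairing the approximate relation $\gamma v_g\approx\epsilon v_g\bar\gamma$ against fixed $F$-rational vectors and tensors in $V$, exploiting the explicit decomposition $V=\eta D_0+F$; the non-rational $v_g$ is absorbed in each pairing at the cost of powers of $\operatorname{N}\fn$ and of $\|g\|_{v_0}$, and the cumulative accounting yields the exponent $8$ in $\delta<CX^{-8}$.

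Once the cascade forces $\gamma$ into an $F$-rational subvariety $Y_g\subset\bG$ cut out by the vanishing of these invariants, analyzing $Y_g\cap K(\fn)$ at a split prime $v\in\sP_0$ dividing $\fn$ yields the contradiction: the rational constraints impose a Galois-compatibility relation on the components $(b_1,b_2)=\gamma_v\in\SL(2,F_v)\times\SL(2,F_v)$ that is incompatible with $b_1\in K_{v,1}(\operatorname{ord}_v\fn,0)$ and $b_2\in K_{v,2}$ when $\operatorname{ord}_v\fn\geq 1$, since these two double cosets have distinct elementary divisor types. The exceptional set $\sP\setminus\sP_0$ consists of primes at which an intermediate algebraic quantity in the cascade has degenerate $v$-adic reduction; these are polynomial expressions in the coordinates of $g$ of height polynomial in $X$, so their number is $\ll\log X$.

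The principal obstacle is extracting enough $F$-rational invariants in the middle step, since the natural small-at-$v_0$ quantity $\gamma v_g-\epsilon v_g\bar\gamma$ involves the non-rational vector $v_g$. This requires a careful choice of rational pairings that absorb the non-rationality of $v_g$, and the precise exponent $8$ emerges from bookkeeping how $v_g$ propagates through the chain of Liouville arguments.
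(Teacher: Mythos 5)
Your proposal has the right general flavor (archimedean smallness at $v_0$ plus bounded denominators from $K(\fn)$, a product-formula/Liouville step forcing rational quantities to vanish, and a double-coset obstruction at a split prime dividing $\fn$), but it is missing the one idea the whole argument turns on, and as written the construction of $\sP_0$ does not make sense. The crucial difficulty you flag -- that the small quantity $\rho(\gamma)v_g - \epsilon v_g$ involves the non-rational vector $v_g = \rho(g)\bv_1$ -- is not resolved by "pairing against rational tensors": any such pairing still contains $v_g$ and so produces a small \emph{real} number, not an element of $F$ or $E$, and the product formula gives nothing. The paper's resolution is different in kind: it never produces scalar invariants of a single $\gamma$. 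Instead, for any three elements $\gamma_1,\gamma_2,\gamma_3$ of $M^+(g,\delta,X)$ it forms the $F$-rational operator $T=\bigoplus_i \alpha_i(\rho(\gamma_i)-I):V\to V^{\oplus 3}$ (with $\alpha_i$ normalizing the finite-place norms), observes that $T$ nearly annihilates $v_g$ at $v_0$ while $\|T\|\ll X$, and invokes a Liouville-type lemma on eigenvalues of $^tTT$ (Lemma \ref{existance of kernel}) to conclude $T$ has nontrivial kernel once $\delta\ll X^{-8}$ -- this is exactly where the exponent $8$ comes from, via $\delta X \gg \|T\|^{-7}(\|T\|^{v_0})^{-1}$. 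Combined with the fact that each $\ker(\rho(\gamma)\mp I)$ is two-dimensional (Lemma \ref{rep kernel lemma}), this yields a \emph{single} rational vector $\bv^\pm\in V$ of height $\ll X^3$ (Cramer's rule, Lemma \ref{norm bound for vector in kernel}) fixed, resp.\ anti-fixed, by \emph{every} $\gamma\in M^\pm(g,CX^{-8},X)$ simultaneously.

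This common fixed vector is also what makes the exceptional set well defined, and here your sketch breaks down structurally: you propose that $\sP\setminus\sP_0$ consist of primes where "intermediate algebraic quantities in the cascade have degenerate $v$-adic reduction", but those quantities are either built from $\gamma$ (in which case $\sP_0$ would depend on $\fn$, contradicting the statement, which requires one set $\sP_0$ depending only on $g$ and $X$ and valid for all admissible $\fn$) or built from the transcendental archimedean element $g$ (in which case they have no $v$-adic reductions at all). In the paper, $\sP_0$ is defined from $\bv^\pm$ alone: by the product formula applied to $\operatorname{Nm}(\operatorname{nr}_B(\bv^\pm))\in F^\times$, whose archimedean norms are $\ll X^{O(1)}$, at most $O(\log X)$ primes of $\sP$ fail to make it a unit, and at the remaining primes $\bv^\pm\in\GL(2,\cO_v)\times\GL(2,\cO_v)$. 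The endgame is then concrete rather than a vague Galois-compatibility: for $v\in\sP_0$ dividing $\fn$, $\gamma\in K(\fn)$ forces $\rho(\gamma)\bv^+=\gamma\bv^+\overline{\iota(\gamma)}$ into $\GL(2,\cO_v)t\GL(2,\cO_v)\times\GL(2,\cO_v)t\GL(2,\cO_v)$ with $t\notin K_{v,1}$, which cannot equal the unit $\bv^+$. Without the common-kernel construction, neither the exponent $8$, nor the bound $\#(\sP\setminus\sP_0)\ll\log X$, nor the uniformity in $\fn$ can be recovered from your outline.
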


To estimate the right-hand side of the amplification inequality in Proposition \ref{amplification inequality}, we shall estimate the integral $I(\lambda,g)$ as follows.
\begin{proposition}\label{bound of I}
    We have $I(\lambda,g)\ll \left(1+\lambda\,d(g,H^\prime)\right)^{-1}$ if $g\in G_0$ and $d(g,e)\leq 1$. Here the implied constant depends on the support of $b$ and the $L^\infty$-norms of $b$ and finitely many of its derivatives.
\end{proposition}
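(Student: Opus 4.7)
The plan is to unfold $I(\lambda,g)$ into an oscillatory integral via Harish-Chandra inversion and then estimate it using stationary- and non-stationary-phase analysis guided by a geometric description of the critical set.

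First, by Harish-Chandra's inversion formula and the integral representation of the elementary spherical function $\phi_\nu$ on $G_0/K_0 = \BH^3$, we can write
\begin{align*}
    k_\lambda(w) = \int_\BR h_\lambda(\nu) \left(\int_{K_0} e^{(i\nu + \rho)A(kw)}\,dk\right) |c(\nu)|^{-2}\,d\nu,
\end{align*}
where $|c(\nu)|^{-2}$ is a polynomial of degree $2$ in $\nu$. Substituting into the definition of $I(\lambda,g)$ and exchanging orders of integration leads to
\begin{align*}
    I(\lambda,g) = \int_\BR h_\lambda(\nu)\, |c(\nu)|^{-2}\int_{K_0} J(\nu,k,g)\,dk\,d\nu,
\end{align*}
where the inner oscillatory integral is
\begin{align*}
    J(\nu,k,g) = \iint_{\BH^2\times\BH^2} b(z_1)\,\overline{b(z_2)}\,e^{(i\nu+\rho)F(z_1,z_2,k,g)}\,dz_1\,dz_2,\quad F(z_1,z_2,k,g) = A(k\,z_1^{-1}gz_2).
\end{align*}

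The geometric content of the phase is that $e^{F}$ equals the height in the upper half-space model of the point $k\cdot z_1^{-1}gz_2\cdot o$, where $o$ is the basepoint fixed by $K_0$. Computing $\nabla_{k,z_1,z_2}F$, one finds that $k$-criticality requires $k$ to rotate $z_1^{-1}gz_2\cdot o$ onto the vertical geodesic through $o$, while $(z_1,z_2)$-criticality requires the tangent planes to $\BH^2$ at $z_1$ and to $g\BH^2$ at $gz_2$ to match along a common intersection point. Since $H^\prime$ is precisely the stabilizer of $\BH^2$ in $G_0$, all these conditions can be met simultaneously only when $g \in H^\prime$. For $g$ at distance $\delta := d(g,H^\prime) > 0$, the critical locus is empty, and transversally to the would-be critical set the gradient of $F$ has magnitude comparable to $\delta$.

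Combining stationary and non-stationary phase estimates for $J$ then gives the claim. When $g \in H^\prime$, standard stationary phase in the directions transverse to the critical submanifold yields $\int_{K_0} J\,dk \ll \nu^{-2}$; after multiplying by $|c(\nu)|^{-2} \asymp \nu^2$ and integrating against the $O(1)$-width function $h_\lambda$, we recover the local bound $I(\lambda,g) \ll 1$. For general $g$, one additional integration by parts in the transverse direction produces an extra factor $(1+|\nu|\delta)^{-1}$, yielding $I(\lambda,g) \ll (1+\lambda\,d(g,H^\prime))^{-1}$. The main obstacle will be to verify these stationary-phase estimates uniformly in $g$ on a fixed compact set: specifically, that the Hessian of $F$ transverse to its critical locus is non-degenerate with constants independent of $g$, and that the transverse gradient scales linearly in $d(g,H^\prime)$. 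This can be carried out by parametrizing a neighborhood of $H^\prime$ in $G_0$ by pairs $(h,X)$ with $h \in H^\prime$ and $X$ in a fixed complement of the Lie algebra of $H^\prime$ in $\fsl(2,\BC)$, and computing the $X$-derivative of $F$ via the standard infinitesimal formula for the Iwasawa $A$-function.
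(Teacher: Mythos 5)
Your overall skeleton (Harish--Chandra unfolding of $k_\lambda$, a geometric description of the critical configuration, then uniform phase analysis near $H^\prime$) is the same as the paper's, but the central analytic claim in your third paragraph is wrong, and the final estimate does not follow from it. You assert that for $g\notin H^\prime$ the critical locus of the phase is empty and that the transverse gradient is bounded below by $\sim d(g,H^\prime)$, so that one integration by parts yields the factor $\left(1+|\nu|\,d(g,H^\prime)\right)^{-1}$. In fact the critical set is nonempty for an open set of $g$ arbitrarily close to, but not in, $H^\prime$: criticality corresponds to a geodesic segment meeting both $\BH^2$ and $g\BH^2$ perpendicularly, i.e.\ to the common perpendicular realizing the distance between the two planes, and such a configuration exists precisely when $\BH^2$ and $g\BH^2$ are disjoint at positive distance (this is Corollary 5.8(a) in the paper). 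At these genuine critical points the gradient vanishes identically, so a non-stationary-phase/integration-by-parts argument cannot produce the factor $(1+s\,d(g,H^\prime))^{-1}$; what actually happens is that the critical points persist as $g\to H^\prime$ while the transverse Hessian degenerates, with determinant of size $\sim d(g,H^\prime)^2$ (in the paper, $\det D_0 = 4e^{-2(t_1^\prime+t_2^\prime)}(1-e^{2h})^2$ with $h$ the distance between the planes). So the difficulty is a degenerating stationary-phase problem, not a non-stationary one, and your proposal as written has no mechanism to control it uniformly as $d(g,H^\prime)\to 0$.

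The missing ingredient is a two-step reduction of exactly the kind the paper carries out: first apply stationary phase only in the four variables $(x_1,t_1,x_2,t_2)$, where the Hessian is $\mathrm{diag}(e^{-2\xi_1},1,-e^{-2\xi_2},-1)$ and hence uniformly nondegenerate, reducing $J(s,g)$ to $s^{-2}\int_{K_0}c_1(k,g)e^{is\psi(k,g)}\,dk+O(s^{-3})$ with $\psi(k,g)$ the value of the phase at the fibrewise critical point; then exploit that $\psi(\cdot,g)$ vanishes identically for $g\in H^\prime$ and analyze its first-order behaviour transverse to $H^\prime$. Writing $g=\exp(Y)\exp(X_0)g_0$ with $Y$ in a complement of the Lie algebra of $H^\prime$ (your last paragraph gestures at this), one computes $\partial_t\psi(k,\exp(tY))|_{t=0}$ explicitly via the infinitesimal Iwasawa formula and shows it is, as a function of $k$, Morse--Bott with transverse Hessian of determinant $\sim\|Y\|^2$; then, after polar coordinates $Y=rX(\gamma,\delta)$, the function $\psi/r$ extends smoothly across $r=0$ and remains Morse--Bott, so stationary phase applied to $\psi = r\cdot(\psi/r)$ gives the uniform bound $(1+s\|Y\|)^{-1}\sim(1+s\,d(g,H^\prime))^{-1}$. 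Without this rescaling step (or an equivalent uniform treatment of the degenerating critical points), the claimed bound $I(\lambda,g)\ll(1+\lambda\,d(g,H^\prime))^{-1}$ is not established.
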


We now can prove our main result.

\begin{proof}[Proof of Theorem \ref{main}]
Let $M\geq 1$ be a parameter to be chosen later and let $X= M^{4}$. Let $C>0$ be the constant appearing in Proposition \ref{point counting prop}. Applying Proposition \ref{point counting prop} to $g=g_0$ we get the set of primes $\sP_0$. We define $\sP_M = \{ v\in\sP_0 | M/2\leq q_v\leq M\}$, and define
\begin{align*}
    \cT_M = \sum_{v\in\sP_M}  T_v ,
\end{align*}
where $T_v$'s are defined by (\ref{amplifier 1}). This choice of $\cT_M$ satisfies
\begin{align*}
    |\langle\cT_M\psi,b \rangle| = \#\sP_M \cdot |\langle\psi,b \rangle| \gg M^{1-\varepsilon} |\langle\psi,b \rangle|.
\end{align*}
By Proposition \ref{amplification inequality}, it follows that
\begin{align}\label{amplified inequality 1}
     |\langle\psi,b \rangle|^2 \ll M^{-2+\varepsilon} \sum_{\substack{\gamma\in \bG(F)\\ \gamma\in g_0 \cB g_0^{-1}}} \left| [\cT_M*\cT_M^*](\gamma)I(\lambda,g_0^{-1}\gamma g_0) \right|.
\end{align}
Recall that we define $\cB$ to be the compact set in $G_0$ consisting of $g$ with $d(g,e)\leq 1$.
We choose $\delta = CX^{-8} = CM^{-32}$, and break the sum in \eqref{amplified inequality 1} into those terms with $d(g_0^{-1}\gamma g_0,H^\prime)\leq \delta $ and the complement, which we denote by $\cD$ so that
\begin{align*}
    \cD =\{\gamma\in \bG(F)|\, g_0^{-1}\gamma g_0\in \cB\text{ and }d(g_0^{-1}\gamma g_0,H^\prime)> \delta  \} .
\end{align*}
To estimate the sum over $\cD$, we first use the bound $|I(\lambda,g_0^{-1}\gamma g_0) |\ll (\lambda\delta)^{-1} \ll \lambda^{-1}M^{32}$ from Proposition \ref{bound of I}. This gives
\begin{align*}
    \sum_{\gamma\in \cD} \left| [\cT_M*\cT_M^*](\gamma)I(\lambda,g_0^{-1}\gamma g_0) \right| \ll  \lambda^{-1}M^{32}\sum_{\gamma\in \bG(F)\cap g_0 \cB g_0^{-1}}\left| [\cT_M*\cT_M^*](\gamma) \right|\ll \lambda^{-1}M^{32} \|\cT_M*\cT_M^*\|_{L^1}.
\end{align*}
Note that \eqref{eq: L1 bd for Tv} gives $\|T_v\|_{L^1} \ll M^2$ so $\|\cT_M\|_{L^1} \ll M^3$. Therefore,
\begin{align*}
    \sum_{\gamma\in \cD} \left| [\cT_M*\cT_M^*](\gamma)I(\lambda,g_0^{-1}\gamma g_0) \right| \ll \lambda^{-1}M^{38}.
\end{align*}
We next estimate the sum over the complement of $\cD$. We first use the bound $|I(\lambda,g_0^{-1}\gamma g_0) |\ll 1$ from Proposition \ref{bound of I} to obtain
\begin{align}\label{ine 3}
    \sum_{\substack{\gamma\in \bG(F)\\ \gamma\in g_0 \cB g_0^{-1}\\d(g_0^{-1}\gamma g_0,H^\prime)\leq \delta}}  \left| [\cT_M*\cT_M^*](\gamma)I(\lambda,g_0^{-1}\gamma g_0) \right| \ll \sum_{\substack{\gamma\in \bG(F)\\ \gamma\in g_0 \cB g_0^{-1}\\d(g_0^{-1}\gamma g_0,H^\prime)\leq \delta}} \left| [\cT_M*\cT_M^*](\gamma)\right|.
\end{align}
We next expand $\cT_M*\cT_M^*$ as a sum
\begin{align*}
    \cT_M*\cT_M^* = \sum_{\fn\subset\cO} a_\fn 1_{K(\fn)}
\end{align*}
for some constants $a_\fn$. Our choice of $X$ and $\delta$ means that we may apply Proposition \ref{point counting prop} to show that for any $\fn\neq \cO$ appearing in the expansion of $\cT_M*\cT_M^*$, the term $1_{K(\fn)}$ makes no contribution to the sum in the right-hand side of (\ref{ine 3}). Hence, we only need to consider the term $\fn=\cO$. By the bound \eqref{eq: L2 bd for Tv}, we have
\begin{align}\label{eq: bd for aO}
    a_\cO=[\cT_M*\cT_M^*] (e)=\|\cT_M\|_{L^2}^2= \sum_{v\in\sP_M}\|T_v \|^2_{L^2}\ll M.
\end{align}
Applying \eqref{eq: bd for aO} in \eqref{ine 3} gives
\begin{align*}
     \sum_{\substack{\gamma\in \bG(F)\\ \gamma\in g_0 \cB g_0^{-1}\\d(g_0^{-1}\gamma g_0,H^\prime)\leq \delta}} \left| [\cT_M*\cT_M^*](\gamma)I(\lambda,g_0^{-1}\gamma g_0) \right| \ll  M\sum_{\substack{\gamma\in \bG(F)\\ \gamma\in g_0 \cB g_0^{-1}}} 1_{K_f}(\gamma)\ll M.
\end{align*}
Adding our two bounds gives
\begin{align*}
    |\langle\psi,b \rangle|^2 \ll M^{-2+\varepsilon}(\lambda^{-1}M^{38}+M) = \lambda^{-1}M^{36+\varepsilon} + M^{-1+\varepsilon}.
\end{align*}
Choosing $M = \lambda^{1/37}$ gives  \eqref{main'}, which completes the proof.
\end{proof}

\section{Estimates of Hecke returns}\label{counting}

In this section, we prove Proposition \ref{point counting prop}. The following proof is taken from an unpublished note by Simon Marshall. We thank him for permission to write it here.

\subsection{Vector spaces over a number field}

We first need some definitions of adelic norms and normed vector spaces over a number field. Let $X$ be a finite-dimensional vector space over $F$.
If $v$ is a place of $F$, we let $X_v = X\otimes_F F_v$. We shall work with norms $\|\cdot\|_v$ on the spaces $X_v$, which are assumed to satisfy the following conditions. If $v$ is an infinite place, we assume that the norm comes from a positive definite quadratic or hermitian form. If $v$ is finite we assume that $\|x\|_v \in q_v^\BZ\cup\{0\}$ for all $x\in X_v$, which implies that the unit ball $L$ of $\|\cdot\|_v$ is a lattice in $X$ such that $\|x\|_v = \operatorname{min} \{|c|_v\,|\,x\in cL\}$.

By a norm on $X(\BA)$, we mean a choice of norm $\|\cdot\|_v$ on each $X_v$, with a compatibility condition that there is an $\cO$-lattice $\cL\subset X$ such that $\cL_v = \cL\otimes_\cO \cO_v$ is the unit ball of $\|\cdot\|_v$ for almost all $v$. This ensures that we  may define $\|x\|$ for $x\in X(\BA)$ by the formula $\|x\| = \prod_v\|\cdot\|_v$. We note that $\|\cdot\|$ does not satisfy the triangle inequality. We define a normed $F$-space to be a finite-dimensional vector space $X$ over $F$ with a norm on $X(\BA)$.

If $X$ and $Y$ are two normed $F$-spaces, and $T\in\operatorname{Hom}_F(X,Y)$, then we may define $\|T \|_v$ in the usual way
\begin{align*}
    \|T\|_v = \sup_{0\neq x\in X_v} \frac{\|Tx\|_v}{\|x\|_v}
\end{align*}
for all places $v$. As $\| T\|_v=1$ for almost all $v$, we may also define the adelic operator norm $\|T\| = \prod_v \|T\|_v$. For any finite set of places $S$, we define $\| T\|^S = \prod_{v\notin S}\|T\|_v$ and $\| T\|_S = \prod_{v\in S}\|T\|_v$.
If $S=\{v\}$ consists of a single place, we shall also write $\|T\|^S$ as $\|T\|^v$. 
As with vectors, these operator norms do not satisfy the triangle inequality, but they do satisfy $\|TU\|\leq\|T\|\|U\|$.

If we choose a basis $\{x_1,\dots,x_n\}$ for $X$, we can define a standard adelic norm in the following way. At infinite places, we define the norm by requiring the basis to be orthonormal, and at finite places, we require that the unit ball associated to the norm be the lattice spanned by the basis. We shall refer to these norms as the standard norm with respect to the basis. 

The following two lemmas help us to show that certain $F$-linear maps have nontrivial kernels. Lemma \ref{norm bound for vector in kernel} bounds the complexity of the kernel. Let $X$ and $Y$ be two  $F$-spaces, with bases  $\{x_1,\dots,x_n\}$ and $\{y_1,\dots,y_m\}$, and let $\|\cdot\|_X$ and $\|\cdot\|_Y$ be the corresponding standard norms. If $T\in\operatorname{Hom}_F(X,Y)$, we define $^t T\in\operatorname{Hom}_F(Y,X)$ the transpose of $T$ with repect to the chosen bases, and we have $\|T\|_v = \|^tT\|_v$ for all $v$.

\begin{lemma}\label{lemma 1.}
    Let $T\in\GL(X)$. Let $v$ be a place of $F$, and extend the norm $|\cdot|_v$ on $F_v$ to the algebraic closure $\overline{F_v}$. If $\lambda_v$ is an eignevalue of $T_v$, then
    \begin{align*}
        |\lambda_v|_v \geq \frac{1}{\| T\|^{n-1}\|T\|^v}.
    \end{align*}
\end{lemma}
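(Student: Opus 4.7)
The plan is to bound $|\lambda_v|_v$ from below by combining three ingredients: the standard fact that the spectral radius is dominated by the operator norm, the factorization of $\det T$ as a product of eigenvalues over $\overline{F_v}$, and the product formula applied to $\det T \in F^\times$.

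First I would observe that any eigenvalue $\mu$ of $T_v$ in $\overline{F_v}$ satisfies $|\mu|_v \leq \|T\|_v$. At an archimedean place this is the usual bound of the spectral radius by the operator norm for a positive definite form, and at a finite place it follows from the fact that if $Tx = \mu x$ for $x \in X_v \otimes_{F_v} \overline{F_v}$ (nonzero) then $|\mu|_v \|x\|_v = \|Tx\|_v \leq \|T\|_v \|x\|_v$, once the norm is extended to the algebraic closure in the standard way. Next, factoring the characteristic polynomial over $\overline{F_v}$ as $\prod_{i=1}^n (x - \lambda_i^{(v)})$ where $\lambda_v = \lambda_1^{(v)}$, I get $|\det T|_v = \prod_{i=1}^n |\lambda_i^{(v)}|_v$, and hence
\begin{equation*}
    |\lambda_v|_v \;\geq\; \frac{|\det T|_v}{\prod_{i \geq 2} |\lambda_i^{(v)}|_v} \;\geq\; \frac{|\det T|_v}{\|T\|_v^{n-1}}.
\end{equation*}

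Next I would bound $|\det T|_v$ from below using the product formula. Since $T \in \GL(X)$, we have $\det T \in F^\times$, so $\prod_w |\det T|_w = 1$, giving
\begin{equation*}
    |\det T|_v = \prod_{w \neq v} |\det T|_w^{-1}.
\end{equation*}
At each place $w$, the inequality $|\det T|_w \leq \|T\|_w^n$ holds (at archimedean places this is Hadamard's inequality; at finite places it follows from $\det T$ acting on $\bigwedge^n L$, which is contained in $\|T\|_w^n \bigwedge^n L$). Therefore $|\det T|_v \geq (\|T\|^v)^{-n}$.

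Combining the two estimates yields
\begin{equation*}
    |\lambda_v|_v \;\geq\; \frac{1}{\|T\|_v^{n-1} (\|T\|^v)^n} \;=\; \frac{1}{\|T\|^{n-1} \, \|T\|^v},
\end{equation*}
where the last equality uses $\|T\|^{n-1} = \|T\|_v^{n-1} (\|T\|^v)^{n-1}$. The only nontrivial point is confirming the spectral radius bound and the $|\det T|_w \leq \|T\|_w^n$ bound at finite places for the specific class of norms considered (those whose unit ball is an $\cO_w$-lattice), both of which are routine from the definitions.
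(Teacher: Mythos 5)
Your proposal is correct and follows essentially the same route as the paper: bound each eigenvalue by $\|T\|_v$, write $|\det T|_v$ as the product of eigenvalue absolute values, and use $|\det T|_w \le \|T\|_w^n$ together with the product formula to get $|\det T|_v \ge (\|T\|^v)^{-n}$. You even make explicit the final bookkeeping step $\|T\|_v^{n-1}(\|T\|^v)^{n} = \|T\|^{n-1}\|T\|^v$, which the paper leaves implicit.
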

\begin{proof}
    Suppose that $\lambda_{v,1} = \lambda_v$, $\lambda_{v,2},\dots,\lambda_{v,n}$ are the eigenvalues of $T_v$ repeated with multiplicity. For any place $w$ we have $| \det T|_w \leq \| T \|_w^n$, and so the product formula $\prod_w |\det T|_w = 1$ implies that $|\det T|_v\geq \left(\|T\|^v \right)^{-n}$. Since $|\lambda_{v,i}|\leq \|T\|_v$ for all $i$, we have
    \begin{align*}
        |\lambda_v| \|T \|_v^{n-1} \geq \prod_i |\lambda_{v,i}|_v = |\det T|_v\geq \left(\|T\|^v \right)^{-n},
    \end{align*}
    which completes the proof.
\end{proof}

\begin{lemma}\label{existance of kernel}
    Let $T\in\operatorname{Hom}_F(X,Y)$.   Let $v$ be a real place of $F$. If there exists $x_v\in X_v$ with
    \begin{align*}
        \frac{\| Tx_v\|_v}{\|x_v\|_v} <  \frac{1}{\|T \|^{2n-1}\|T\|^v},
    \end{align*}
    then $T$ has a nontrivial kernel.
\end{lemma}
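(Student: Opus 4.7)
The plan is to argue by contradiction: assume $T$ has trivial kernel and derive a violation of Lemma \ref{lemma 1.} applied to the endomorphism $S = {}^t T \circ T \in \End_F(X)$. This $S$ is invertible precisely when $T$ is injective, and it is the natural square object to feed into Lemma \ref{lemma 1.}; the upper bound on the smallest eigenvalue of $S_v$ implied by the hypothesis will clash with the general lower bound produced by that lemma.

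First, at the real place $v$ the norm comes from a positive definite quadratic form, so taking the chosen basis to be orthonormal at $v$ makes $S_v$ self-adjoint and positive semidefinite, with smallest eigenvalue $\mu_v = \inf_{y \ne 0} \|T_v y\|_v^2/\|y\|_v^2$. Evaluating at $y = x_v$ and using the hypothesis gives
\begin{equation*}
\mu_v \;\le\; \frac{\|T x_v\|_v^2}{\|x_v\|_v^2} \;<\; \frac{1}{(\|T\|^{2n-1}\|T\|^v)^2}.
\end{equation*}

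Next, applying Lemma \ref{lemma 1.} to $S$ yields the lower bound $\mu_v \ge 1/(\|S\|^{n-1}\|S\|^v)$. Submultiplicativity of the operator norm together with the identity $\|{}^tT\|_w = \|T\|_w$ at every place $w$ gives $\|S\|_w \le \|T\|_w^2$, hence $\|S\|^{n-1}\|S\|^v \le \|T\|^{2n-2}(\|T\|^v)^2$. Comparing this with the upper bound produces a contradiction provided
\begin{equation*}
\|T\|^{2n-2}(\|T\|^v)^2 \;\le\; (\|T\|^{2n-1}\|T\|^v)^2,
\end{equation*}
which is equivalent to $\|T\|^{2n} \ge 1$. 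This is exactly where the extra power of $\|T\|$ built into the exponent $2n-1$ (rather than the $2n-2$ that would arise naively) is needed.

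The remaining point, $\|T\| \ge 1$, is where I expect the main subtlety to lie, though it should follow from the product formula. Since $T$ is assumed injective, the Cauchy--Binet identity expresses $\det S$ as a sum of squares of $n \times n$ minors of $T$, and since $F$ is totally real these squares are totally non-negative, forcing $\det S \in F^\times$ and hence $\prod_w |\det S|_w = 1$. At every place $w$, the eigenvalue argument used in the proof of Lemma \ref{lemma 1.} gives $|\det S|_w \le \|S\|_w^n \le \|T\|_w^{2n}$, and taking the product over $w$ yields $\|T\|^{2n} \ge 1$. Combining this with the inequality chain above closes the contradiction and produces the desired nonzero element of $\ker T$.
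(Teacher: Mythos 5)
Your proof is correct, and it follows the paper's strategy in its essentials: assume $T$ injective, form $S = {}^tT\,T \in \GL(X)$, and contradict Lemma~\ref{lemma 1.}. The two places where you deviate are worth noting. First, you convert the hypothesis into the bound $\mu_v \le \|Tx_v\|_v^2/\|x_v\|_v^2$ on the smallest eigenvalue of $S_v$; squaring the hypothesis makes the exponents mismatch those in Lemma~\ref{lemma 1.}, which is why you need the extra input $\|T\|^{2n}\ge 1$, and your derivation of it (Cauchy--Binet plus the product formula for $\det S$, using that $F$ is formally real) is valid. The paper instead estimates $\|Sx_v\|_v \le \|{}^tT\|_v\,\|Tx_v\|_v < \|T\|_v\,\|x_v\|_v/\bigl(\|T\|^{2n-1}\|T\|^v\bigr) = \|x_v\|_v/\bigl(\|T\|^{2n-2}(\|T\|^v)^2\bigr)$ and uses $\mu_v \le \|Sx_v\|_v/\|x_v\|_v$ for the symmetric matrix $S_v$; with this choice the exponent $2n-1$ in the hypothesis is calibrated to cancel exactly against Lemma~\ref{lemma 1.}, so no lower bound on $\|T\|$ is needed. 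Second, you justify $S \in \GL(X)$ via Cauchy--Binet and formal reality, whereas the paper gets it more directly from $\langle {}^tTTx,x\rangle_{X,v} = \|Tx\|_{Y,v}^2 \ne 0$ at the real place $v$. So your argument buys nothing extra but costs an additional global step; the paper's intermediate bound is the leaner route, though your observation that $\|T\|\ge 1$ (for $T\ne 0$, by the product formula) explains why the seemingly wasteful exponent $2n-1$ still suffices in your version.
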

\begin{proof}
    We assume that $T$ is injective.  For any $0\neq x\in X$, at the real place $v$, we have
    \begin{align*}
        \langle ^tTTx,x\rangle_{X,v} = \langle Tx,Tx\rangle_{Y,v}\neq 0,
    \end{align*}
    where $\langle\cdot,\cdot\rangle_{X,v}$ and $\langle\cdot,\cdot\rangle_{Y,v}$ are the inner products associated to $\|\cdot\|_{X,v}$ and $\|\cdot\|_{Y,v}$. This implies that $^tTT$ is also injective, so $^tTT\in\GL(X)$. We have
    \begin{align*}
        \|^tTTx_v\|_v \leq \frac{\| Tx_v\|_v}{\|x_v\|_v} \|^tT \|_v \|x_v\|_v <  \frac{\|T \|_v }{\|T \|^{2n-1}\|T\|^v} \|x_v\|_v= \frac{1 }{\|T \|^{2n-2}(\|T\|^v)^2}\|x_v\|_v.
    \end{align*}
    Because $^tTT$ is symmetric and $v$ is real, if $\lambda_v$ is the eigenvalue of $^tT_vT_v$ with minimal absolute value, then
    \begin{align*}
        |\lambda_v|_v \leq \frac{ \|^tTTx_v\|_v}{\|x_v\|_v}<\frac{1 }{\|T \|^{2n-2}(\|T\|^v)^2},
    \end{align*}
    which contradicts the result from  Lemma \ref{lemma 1.} applied to ${}^tTT$.
\end{proof}

\begin{lemma}\label{norm bound for vector in kernel}
    If $T\in\operatorname{Hom}_F(X,Y)$ is nonzero and not injective, then $\ker T$ can be spanned by vectors $x\in X$ with  $\|x\|\ll\|T \|^{n-1}$, where the implied constant depends only on $X$ and $Y$.
\end{lemma}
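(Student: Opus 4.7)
The plan is to exhibit an explicit basis of $\ker T$ via Cramer's rule and to bound the adelic norm of each basis vector using the product formula.

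Set $r = \rank T$, so that $1 \leq r \leq n-1$. Writing $T$ as a matrix with respect to the chosen bases of $X$ and $Y$, one may permute these bases (which preserves the standard norms at every place) so that the $r \times r$ submatrix using the first $r$ rows and the first $r$ columns has nonzero determinant $\Delta \in F^\times$. Since the full matrix has column rank $r$, Cramer's rule applied to this submatrix produces coefficients $\Delta_{j,i} \in F$, each an $r \times r$ minor of the matrix of $T$, satisfying
\[
\Delta \cdot T(x_j) = \sum_{i=1}^r \Delta_{j,i}\, T(x_i) \qquad \text{for } j = r+1, \ldots, n.
\]
The vectors
\[
\xi_j := \Delta\, x_j - \sum_{i=1}^r \Delta_{j,i}\, x_i \in X, \qquad j = r+1, \ldots, n,
\]
therefore lie in $\ker T$ and are $F$-linearly independent (the coefficient of $x_j$ is $\Delta \neq 0$). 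Since their number equals $\dim \ker T = n-r$, they span $\ker T$.

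Next I bound $\|\xi_j\|_v$ placewise. Its coordinates in the basis $\{x_i\}$ are each either zero or an $r \times r$ minor of the matrix of $T$. At any finite place, the Leibniz expansion combined with the ultrametric inequality and the bound on matrix entries by $\|T\|_v$ shows that each such minor has $v$-adic absolute value at most $\|T\|_v^r$, giving $\|\xi_j\|_v \leq \|T\|_v^r$. At the finitely many infinite places the triangle inequality (or Hadamard's inequality) yields the same bound up to a constant depending only on $n$. Taking the product over all places produces $\|\xi_j\| \ll \|T\|^r$. Finally, choose any $j_0$ with $T(x_{j_0}) \neq 0$; applying the product formula to a nonzero coordinate of $T(x_{j_0})$ in the chosen basis of $Y$ gives $\|T(x_{j_0})\| \geq 1$, and combined with $\|T(x_{j_0})\| \leq \|T\|\cdot\|x_{j_0}\| = \|T\|$ (using that $\|x_{j_0}\| = 1$ in the standard norm) this forces $\|T\| \geq 1$. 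Therefore $\|T\|^r \leq \|T\|^{n-1}$ and the stated bound $\|\xi_j\| \ll \|T\|^{n-1}$ follows.

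The only real obstacle is routine bookkeeping, namely controlling the $n$-dependent constants from the triangle inequalities at the infinite places and checking that permuting the bases preserves the adelic norm structure. The one conceptual point worth highlighting is the lower bound $\|T\| \geq 1$ for nonzero $T$ coming from the product formula, without which the passage from the sharper bound $\|T\|^r$ to $\|T\|^{n-1}$ would fail when $\|T\| < 1$.
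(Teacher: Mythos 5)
Your proof is correct and is essentially the paper's argument: both construct kernel vectors via Cramer's rule whose coordinates are (signed) minors of $T$ — the paper first discards dependent rows and takes $(n-1)\times(n-1)$ minors of an augmented matrix, while you take $r\times r$ bordered minors around a maximal nonsingular block — and then bound the adelic norm place by place. The one genuinely useful addition in your write-up is the explicit product-formula observation that $\|T\|\geq 1$ for nonzero $T$, which is exactly what justifies passing from the exponent $r$ (or $m$ in the paper's version) up to $n-1$ and is left implicit in the paper's proof.
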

\begin{proof}
    With respect to the bases, we think of $T$ as an $m$-by-$n$ matrix. We may assume that the rows of $T$ are linearly independent, because removing a row that is a linear combination of other rows does not change the kernel, and decreases $\|T\|$, and so $m\leq n-1$. We can construct an element in $\ker T$ via Cramer's rule as follows. We augment $T$ to an $(n-1)$-by-$n$ matrix $\tilde{T}$, by adding in rows equal to arbitrary basis vectors. We then take a vector $v_{\tilde{T}}$ whose entries are the determinants of $(n-1)$-by-$(n-1)$ minors of $\tilde{T}$, times a suitable sign factor. The vectors $v_{\tilde{T}}$ span $\ker T$ as we vary over all possibilities for $\tilde{T}$. If the rows of $T$ are $v_1,\dots,v_m$, then we have $\|v_{\tilde{T}} \| \ll \prod_{i=1}^m\|v_i \| $ and $\|v_i\|\leq \|T\|$, which completes the proof.
\end{proof}

\subsection{Bounding $M(g,\delta, \fn)$}

As we shall use the results in the previous subsection, applied to the action of $\bG$ on $V$ via the representation $\rho$, we need to equip $V$ with an adelic norm. For infinite place $v$, we give $V_v$ an arbitrary norm; for finite place $v$, we give it the norm coming from $\cL_v$.
Since the group $H^\prime$ has two connected components, we may consider two subsets, for $g\in\Omega_{v_0}$  and $\delta>0$,
\begin{align*}
    M^+(g,\delta,\fn)=\left\{ \gamma\in\bG(F)\cap K(\fn) \,|\, d(g^{-1}\gamma g,e)\leq 1, d(g^{-1}\gamma g,H_0)\leq \delta\right\}
\end{align*}
and
\begin{align*}
    M^-(g,\delta,\fn)=\left\{ \gamma\in\bG(F)\cap K(\fn) \,|\, d(g^{-1}\gamma g,e)\leq 1, d(g^{-1}\gamma g,H^\prime\backslash H_0)\leq \delta\right\}.
\end{align*}
Consequently, $M(g,\delta,\fn) =  M^+(g,\delta,\fn) \cup  M^-(g,\delta,\fn)$.
For $X>0$, we define
\begin{align*}
    M^\pm(g,\delta,X) = \bigcup_{\substack{ \fn\text{ only divisible by primes in }\sP\\\operatorname{N}(\fn)<X }} M^\pm(g,\delta,\fn).
\end{align*}

\begin{lemma}\label{rep kernel lemma}
    Let $g\in\SL(2,\BC)$ so that $g\neq \pm I$. If $\rho(g)-I$ (resp. $\rho(g)+I$) has a nontrivial kernel, then the kernel $\ker(\rho(g)-I)$ (resp. $\ker(\rho(g)+I)$) must be two dimensional over $\BC$.
\end{lemma}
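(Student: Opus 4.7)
The plan is to pass to the complexification $V_{v_0} \otimes_{\BR} \BC \cong M_2(\BC)$, on which $\rho(g)$ still acts by the formula $A \mapsto g A \bar g^{-1}$. Since $\rho(g) \mp I$ is defined over $\BR$, the $\BR$-dimension of its kernel on $V_{v_0}$ equals the $\BC$-dimension of its kernel on $M_2(\BC)$, so it suffices to prove the analogous statement on $M_2(\BC)$. I would then split according to whether $g$ is diagonalizable in $\SL(2,\BC)$.

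In the diagonalizable case, after conjugation (which does not change kernel dimensions) we may assume $g = \diag(\mu, \mu^{-1})$ for some $\mu \in \BC^\times$. A direct computation shows that the standard matrix units $E_{11}, E_{22}, E_{12}, E_{21}$ are eigenvectors of $\rho(g)$ with eigenvalues $\mu/\bar\mu$, $\bar\mu/\mu$, $|\mu|^2$, $|\mu|^{-2}$ respectively. Thus the spectrum of $\rho(g)$ on $M_2(\BC)$ consists of a unimodular reciprocal pair $(\mu/\bar\mu)^{\pm 1}$ together with a positive real reciprocal pair $|\mu|^{\pm 2}$. The eigenvalue $1$ appears precisely when $\mu \in \BR$ or $|\mu| = 1$; each such condition collapses one pair and contributes a $2$-dimensional $1$-eigenspace, while both conditions simultaneously force $\mu = \pm 1$, i.e.\ $g = \pm I$, which is excluded. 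Similarly, $-1$ can arise only via $\mu/\bar\mu = -1$ (i.e.\ $\mu$ purely imaginary), which again yields a $2$-dimensional $(-1)$-eigenspace, since the real pair $|\mu|^{\pm 2}$ is never $-1$.

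For the non-diagonalizable case, $g$ is conjugate to $\pm u$ with $u = \begin{pmatrix} 1 & 1 \\ 0 & 1 \end{pmatrix}$. Using the easily-checked identity $\rho(-g) = \rho(g)$ (the two sign factors cancel in $(-g)A(-\bar g)^{-1}$), we may assume $g = u$. A short $2\times 2$ matrix computation then shows $\ker(\rho(u) - I)$ is spanned by $E_{12}$ and the identity $I$, hence is $2$-dimensional, while $\ker(\rho(u) + I)$ is trivial, consistent with the claim.

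There is no serious obstacle in this argument; the content is a direct case analysis based on the Jordan form of $g$. The guiding observation is that the eigenvalues of $\rho(g)$ on $M_2(\BC)$ come in two reciprocal pairs, one positive real and one unimodular, which forces the multiplicity of each eigenvalue $\pm 1$ of $\rho(g)$ to be even, and the only way to collapse both pairs at once is $g = \pm I$.
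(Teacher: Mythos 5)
Your proof is correct and follows essentially the same route as the paper: complexify to $M_2(\BC)$, reduce to the Jordan form of $g$, read off the eigenvalues $\mu/\bar\mu,\ \bar\mu/\mu,\ |\mu|^{2},\ |\mu|^{-2}$ on the matrix units, and treat the unipotent case by direct computation. The only cosmetic difference is your use of $\rho(-g)=\rho(g)$ to fold the two non-diagonalizable cases into one, which the paper lists separately.
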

\begin{proof}
     It suffices to consider $V_{v_0}\otimes \BC$.
     Recall that this representation is given by $\rho(g) A = g A \bar{g}^{-1}$ for $g\in\SL(2,\BC)$  and $A\in M_2(\BC)$. Since $\rho(hgh^{-1})-I = \rho(h)(\rho(g)-I)\rho(h)^{-1}$, we can first reduce arbitrary $g\in \SL(2,\BC)$ to its Jordan form by taking the matrix conjugate in $\SL(2,\BC)$. Hence, we only need to deal with the following two cases.
    
    Suppose $g = \operatorname{diag}( \lambda,\lambda^{-1})$ with $\lambda\in \BC^*$ and $\lambda\neq \pm 1$. By acting on matrix units $E_{ij}$'s, it may be seen that $\rho(g)$ is diagonalizable with eigenvalues $|\lambda|^2, |\lambda|^{-2}, \lambda/\bar{\lambda}$ and $\bar{\lambda}/\lambda$. Then the result follows.

    The second case is that $g=\begin{pmatrix} 1&1\\&1\end{pmatrix}$, or $g=\begin{pmatrix} -1&1\\&-1\end{pmatrix}$. The lemma follows from a direct computation.
\end{proof}

\begin{lemma}\label{fixing a vector}
    There is a constant $C>0$ such that if $g\in\Omega_{v_0}$ and $X>0$, then there exist $\bv^\pm\in V$ with $\|\bv^\pm\|\ll X^3$ such that $\rho(\gamma)\bv = \pm\bv$ for all  $\gamma\in M^\pm(g,CX^{-8},X)$.
\end{lemma}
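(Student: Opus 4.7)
The plan is to apply the linear-algebra lemmas \ref{lemma 1.}, \ref{existance of kernel}, \ref{norm bound for vector in kernel}, and \ref{rep kernel lemma} to the operator $T_1=\rho(\gamma_1)-I\in\Hom_F(V,V)$ for a single $\gamma_1\in M^+$, producing a small-norm kernel vector, and then to use a product-formula argument at $v_0$ to promote such a vector to a common fixed vector for all of $M^+(g,CX^{-8},X)$. The case of $\bv^-$ is identical after replacing $T_1$ by $\rho(\gamma_1)+I$ and using that the non-identity coset of $H^\prime/H_0$ acts on $\bv_1$ with eigenvalue $-1$.

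First, assuming $M^+:=M^+(g,CX^{-8},X)\neq\emptyset$, I pick $\gamma_1\in M^+$ and check $\|T_1\|\ll X$: at $v_0$ the constraints $g\in\Omega_{v_0}$ and $d(g^{-1}\gamma_1 g,e)\leq 1$ confine $\gamma_1$ to a bounded subset of $G_0$, giving $\|T_1\|_{v_0}\ll 1$; at the other archimedean places $\bG(F_v)$ is compact; and at the finite places $\gamma_1\in K(\fn_1)$ for some $\fn_1$ with $\operatorname{N}\fn_1\leq X$, so the finite-part product is $\ll\operatorname{N}\fn_1\leq X$. I then apply Lemma \ref{existance of kernel} with $T=T_1$, real place $v_0$, and test vector $x=\rho(g)\bv_1\in V_{v_0}$. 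Because $\bv_1$ is $\rho(H_0)$-invariant and $d(g^{-1}\gamma_1 g,h_1)\leq\delta:=CX^{-8}$ for some $h_1\in H_0$, one has $\|T_1 x\|_{v_0}\ll\delta$ against $\|x\|_{v_0}\sim 1$. With $n=\dim V=4$ the hypothesis $\delta\ll 1/(\|T_1\|^{2n-1}\|T_1\|^{v_0})\sim X^{-8}$ holds for $C$ small. This yields a nonzero $F$-rational kernel $K_1\subset V$, of dimension exactly two by Lemma \ref{rep kernel lemma}, spanned (by Lemma \ref{norm bound for vector in kernel}) by $F$-rational vectors of adelic norm $\ll\|T_1\|^{n-1}\ll X^3$.

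Second, I would exhibit an $F$-rational $\bv^+\in K_1$ with $\|\bv^+\|\ll X^3$ whose component at $v_0$ is aligned with $\rho(g)\bv_1$ up to an error of size $O(\delta\|\bv^+\|_{v_0})$. The geometric input is that $K_1\otimes F_{v_0}$ is $O(\delta)$-close to the two-dimensional $\rho(\gamma_1)$-fixed subspace $\rho(g)V_{v_0}^{h_1}=\rho(g)(\BR\bv_1\oplus\BR\cdot iH_1)$, and $\rho(g)\bv_1$ is distinguished inside $\rho(g)V_{v_0}^{h_1}$ as the unique line fixed pointwise by all of $\rho(H_0)$. For such a $\bv^+$, the product formula closes the argument: for any $\gamma\in M^+$, using $d(g^{-1}\gamma g,h)\leq\delta$ for some $h\in H_0$ together with $\rho(h)\bv_1=\bv_1$ gives $\|(\rho(\gamma)-I)\bv^+\|_{v_0}\ll\delta\|\bv^+\|_{v_0}$, while at the other places $\|(\rho(\gamma)-I)\bv^+\|^{v_0}\leq\|\rho(\gamma)-I\|^{v_0}\|\bv^+\|^{v_0}\ll X\|\bv^+\|^{v_0}$. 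Multiplying,
\[
\|(\rho(\gamma)-I)\bv^+\|\ll \delta X\|\bv^+\|\ll \delta X^4\ll X^{-4}<1,
\]
and since $(\rho(\gamma)-I)\bv^+\in V$ is $F$-rational, the product formula forces it to vanish, so $\rho(\gamma)\bv^+=\bv^+$.

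The hard part is the archimedean extraction step: converting the abstract small spanning set of $K_1$ provided by Lemma \ref{norm bound for vector in kernel} into an $F$-rational vector of adelic norm $\ll X^3$ whose transverse $\rho(g)\cdot iH_1$-component is of size $\ll\delta\|\bv^+\|_{v_0}$. Generically, when $M^+$ contains elements whose associated $h\in H_0$ do not all lie in one common one-parameter subgroup, the common kernel $V^{M^+}$ is forced to be the one-dimensional $\rho(g)\bv_1$-direction, and the required vector is the generator of $V^{M^+}$. In the degenerate case when all such $h$ share a common one-parameter subgroup, $V^{M^+}$ is two-dimensional and coincides with $K_1$, and one may take $\bv^+$ to be any of the small spanning vectors produced above.
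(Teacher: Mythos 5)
Your step with a single $\gamma_1$ (applying Lemma \ref{existance of kernel} to $T_1=\rho(\gamma_1)-I$ with test vector $\rho(g)\bv_1$, and invoking Lemmas \ref{rep kernel lemma} and \ref{norm bound for vector in kernel}) is fine, and the product-formula closing step would indeed work \emph{if} you had the vector you describe; but the argument has a genuine gap exactly at what you call the hard part. An $F$-rational $\bv^+\in K_1$ of adelic norm $\ll X^3$ whose $v_0$-component lies within $O(\delta\|\bv^+\|_{v_0})=O(X^{-8}\|\bv^+\|_{v_0})$ of the line through $\rho(g)\bv_1$ does not exist in general: inside the two-dimensional real space $K_1\otimes F_{v_0}$, rational vectors of height $\ll X^3$ can only approximate a generic (non-rational) line to precision roughly $X^{-6}$, so the required $X^{-8}$-alignment is out of reach unless the target line is essentially rational. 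The fallback dichotomy in your last paragraph does not repair this: in the ``generic'' case it simply asserts that $W=\bigcap_{\gamma\in M^+}\ker(\rho(\gamma)-I)$ is nonzero, one-dimensional and equal to the $\rho(g)\bv_1$-direction (which cannot literally hold, since $W$ is $F$-rational and $\rho(g)\bv_1$ in general is not), and that it has a generator of norm $\ll X^3$ --- but these are precisely the statements that need proof, and once you have them the product-formula step is superfluous, since any vector of $W$ is already fixed by every $\gamma\in M^+$.

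The missing idea is the paper's multi-element argument. The paper applies Lemma \ref{existance of kernel} not to a single $\rho(\gamma_1)-I$ but to $T=\oplus_{i=1}^{3}\alpha_i(\rho(\gamma_i)-I):V\to V^{\oplus 3}$ for an \emph{arbitrary} triple $\gamma_1,\gamma_2,\gamma_3\in M^+(g,\delta,X)$, with scalars $\alpha_i\in F^\times$ chosen so that the finite-place contributions to the adelic norm are transferred to $v_0$; this keeps $\|T\|\ll X$ and $\|T\|^{v_0}\ll 1$ (without the $\alpha_i$ the adelic norm of the triple could be as large as $X^3$ and the exponent $8$ in $\delta=CX^{-8}$ would be lost). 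This shows that any three elements of $M^+$ share a nonzero kernel vector; combined with the two-dimensionality of each kernel from Lemma \ref{rep kernel lemma}, a short Helly-type argument gives $W\neq 0$, and Lemma \ref{norm bound for vector in kernel}, applied to an operator built from at most two of the $\gamma_i$ (again with the $\alpha_i$ normalization), produces a generator of $W$ of norm $\ll X^3$. Without this step your proposal only produces a vector fixed by the single element $\gamma_1$, not by all of $M^\pm(g,CX^{-8},X)$, so the lemma is not proved.
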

\begin{proof}
    Let $\delta = CX^{-8}$ for $C>0$ to be chosen later.
    We first show that for arbitrary three elements $\gamma_1,\gamma_2,\gamma_3\in M^+(g,\delta,X)$, 
    we have $\bigcap_{i=1}^3\ker(\rho(\gamma_i)-I)\neq 0$. We consider the operator $T:V\to V^{\oplus 3}$ given by $T = \oplus_i \alpha_i (\rho(\gamma_i)-I)$ for suitably chosen $\alpha_i\in F^\times$. For $v\in S$, we have $\| \rho(\gamma_i)-I\|_v\ll 1$ because $\gamma_i$'s lie in a fixed bounded set at places in $S$. If $\gamma_i \in M(g,\delta,\fn_i)$, an elementary computation gives $\| \rho(\gamma_i)\|_v = \operatorname{N} (\fn_{i,v})  $  for $v\notin S$, and hence $\| \rho(\gamma_i)-I\|_v \ll \operatorname{N} (\fn_{i,v})  $.
    We choose $\alpha_i$ such that $|\alpha_i|_v = \operatorname{N} (\fn_{i,v}^{-1})$ for $v\notin S$,  $|\alpha_i|_v \asymp 1$ for $v\in S$, $v\neq v_0$, and $|\alpha_i|_{v_0}\asymp \operatorname{N} (\fn_i)$. Such a choice can be made after possibly enlarging $S$ so that it generates the ideal class group of $F$. It follows that $\| \alpha_i (\rho(\gamma_i)-I)\|_v \leq 1$ for $v\notin S$, $\| \alpha_i (\rho(\gamma_i)-I)\|_v \ll 1$ for $v_0\neq v\in S$, and $\| \alpha_i (\rho(\gamma_i)-I)\|_{v_0} \ll\operatorname{N}(\fn_i) < X$. The same bounds therefore hold for $T$, so that $\|T \|\ll X$ and $\| T\|^{v_0}\ll 1$. The condition $d(g^{-1}\gamma_i g,H_{0})\leq \delta$ implies that $\|\rho(g^{-1}\gamma_ig)\bv_1-\bv_1 \|_{v_0} \ll \delta$, and as $g$ is bounded we also have $\|\rho(\gamma_i)\rho(g)\bv_1-\rho(g)\bv_1 \|_{v_0} \ll \delta$, so we also have $\|T\rho(g)\bv_1\|_{v_0}\ll \delta X$ and $\|\rho(g)\bv_1\|_{v_0}\asymp 1$. Therefore, by applying Lemma \ref{existance of kernel}, we see that $T$ has a nontrivial kernel unless
    \begin{align*}
        \delta X \gg \frac{\|T\rho(g)\bv_1\|_{v_0}}{\|\rho(g)\bv_1\|_{v_0}} \geq \frac{1}{\|T \|^{7}\|T\|^{v_0}} \gg X^{-7}.
    \end{align*}
    It follows that if we take $C$ small enough, then $T$ has a nontrivial kernel, which implies $\bigcap_{i=1}^3\ker(\rho(\gamma_i)-I)\neq 0$.
    
    By Lemma \ref{rep kernel lemma}, if $\pm I\neq \gamma\in\bG(F)$ is such that $\rho(\gamma)-I$ has a nontrivial kernel, then the kernel must be two dimensional. Moreover, by what we have proved above, if we take any three $\gamma_i\in M^+(g,\delta,X)\backslash\{\pm I\}$, then $\bigcap_{i}\ker(\rho(\gamma_i)-I)\neq 0$. This implies that
    \begin{align*}
        W  = \bigcap_{\gamma\in M^+(g,\delta,X)}\ker(\rho(\gamma)-I) \neq 0.
    \end{align*}
    If $\dim W=2$, then $W$ contains a vector with norm $\ll X^3$ by Lemma \ref{norm bound for vector in kernel}. If $\dim W =1$, then we may choose $\gamma_1,\gamma_2\in M(g,\delta,X)$ such that $W = \ker\left( \alpha_1(\rho(\gamma_1)-I)\oplus  \alpha_2(\rho(\gamma_2)-I)\right)$, where $\alpha_1,\alpha_2$ are chosen as above. 
    Lemma \ref{norm bound for vector in kernel} then gives $\bv\in W$ with $\| \bv\|\ll X^3$.
    
    The proof for $M^-(g,\delta,X)$ is the same except by taking $\rho(\gamma)+I$ instead of $\rho(\gamma)-I$.
\end{proof}

\begin{proof}[Proof of Proposition \ref{point counting prop}]
We let $C>0$ be as in Lemma \ref{fixing a vector}. We apply Lemma \ref{fixing a vector} to produce vectors $\bv^\pm\in V$ such that $\| \bv^\pm\|\ll X^3$ and $ \rho(\gamma)\bv^\pm=\pm\bv^\pm$ for all $\gamma\in M^\pm(g,CX^{-8},X)$. After scaling $\bv^\pm$, we assume that $\|\bv^\pm\|_v\leq 1$ for $v\neq v_0$ and that $\|\bv^\pm\|_{v_0}\ll X^3$. This implies that $\bv^\pm\in \cL_v=\{ (T,\iota(T))|T\in M_2(\cO_v) \}$ for $v\in\sP$. 

Let us construct the set $\sP_0$.
We have $\prod_v |\operatorname{N}_{E/F}(\operatorname{nrd}_B(\bv^\pm))|_v = 1$. We also have $|\operatorname{N}_{E/F}(\operatorname{nrd}_B(\bv^\pm))|_v\leq 1$ for $v\notin S$, $|\operatorname{N}_{E/F}(\operatorname{nrd}_B(\bv^\pm))|_v\ll 1$ for $v_0\neq v\in S$, and $|\operatorname{N}_{E/F}(\operatorname{nrd}_B(\bv^\pm))|_{v_0}\ll X^6$. 
It follows that if we let $\sP^\prime\subset\sP$ be the set of places at which $|\operatorname{N}_{E/F}(\operatorname{nrd}_B(\bv^\pm))|_v <1$, then we have $\sum_{v\in \sP^\prime}\log q_v\ll \log X$, and hence that $\#(\sP^\prime)\ll \log X$. We set $\sP_0 = \sP\backslash\sP^\prime$.
If $v\in\sP$, and if we know $|\operatorname{N}_{E/F}(\operatorname{nrd}_B(\bv^\pm))|_v=1$,  then $\bv^\pm\in\cL_v$ will imply $\bv^\pm = (T^\pm,\iota(T^\pm))$ for some $T^\pm \in \GL(2,\cO_v)$.
So the set $\sP_0$ is chosen to make sure that $\bv^\pm\in \GL(2,\cO_v)\times \GL(2,\cO_v)$ for $v\in \sP_0$. 

It remains to show that if $\fn$ is divisible only be primes in $\sP_0$ and satisfies $\operatorname{N}(\fn)<X$, $\fn\neq \cO$, then $M(g,\delta,\fn)= M^+(g,\delta,\fn) \cup  M^-(g,\delta,\fn)=\emptyset$.
Let $\gamma\in M^+(g,\delta,\fn)$, so that $\rho(\gamma)\bv^+=\bv^+$ by Lemma \ref{fixing a vector}. Let $v\in \sP_0$ dividing $\fn$. The condition $\gamma\in K(\fn)$ implies that $\gamma\in K_{v,1}tK_{v,1}\times K_{v,2} $ for some diagonal matrices $t$ so that $t\notin K_{v,1}$. It follows that $\overline{\iota(\gamma)}\in K_{v,1}\times K_{v,2}tK_{v,2} $. As $v\in\sP_0$, we have $\bv^+\in \GL(2,\cO_v)\times\GL(2,\cO_v)$. It follows that $\rho(\gamma)\bv^+ = \gamma\bv^+\overline{\iota(\gamma)} \in \GL(2,\cO_v)t\GL(2,\cO_v)\times \GL(2,\cO_v)t\GL(2,\cO_v)$, which contradicts that $\rho(\gamma)\bv^+=\bv^+$. The argument is the same if $\gamma\in M^-(g,\delta,\fn)$, which completes the proof.
\end{proof}

\section{Estimates of \texorpdfstring{$I(\lambda,g)$}{I(lambda,g)}}\label{integral}

This section aims to prove Proposition \ref{bound of I}. Recall that
\begin{align*}
     I(\lambda,g) = \iiiint_\BR b(x_1,t_1)\overline{b(x_2,t_2)} k_{\lambda}(a(-t_1)n(-x_1)gn(x_2)a(t_2))  e^{-t_1-t_2}   dx_1 dt_1 dx_2  dt_2.
\end{align*}
Applying the inverse Harish-Chandra transform \eqref{eq: HC inversion} to $k_\lambda$, we have
\begin{align*}
    I(\lambda,g) = \int_0^\infty h_\lambda(s) J(s,g) d\mu(s),
\end{align*}
where
\begin{align}
    J(s,g) &= \iiiint_\BR b(x_1,t_1)\overline{b(x_2,t_2)} \varphi_s(a(-t_1)n(-x_1)gn(x_2)a(t_2)) e^{-t_1-t_2}    dx_1 dt_1 dx_2  dt_2\notag\\
    &=\int_{\BR^4}\int_{K_0} b_0(x_1,t_1,x_2,t_2,u) \exp(isA(ua(-t_1)n(-x_1)g(n(x_2)a(t_2))  du   dx_1 dt_1 dx_2  dt_2.\label{eq:defn of J(s,g)}
\end{align}
The last identity above is obtained by applying \eqref{eq:HC K int formula} and writing $$b_0(x_1,t_1,x_2,t_2,u) = b(x_1,t_1)\overline{b(x_2,t_2)}\exp(A(ua(-t_1)n(-x_1)gn(x_2)a(t_2))-t_1-t_2) .$$
To bound $I(\lambda,g)$, it suffices to bound $J(s,g)$. Since $d\mu(s)/ds\asymp s^2$, we shall show the following proposition, which implies the desired bound on $I(\lambda,g)$.
\begin{proposition}\label{bound for J}
    If $s\geq1$ and $g\in G_0$ satisfies $d(g,e)\leq 1$, we have
    \begin{align*}
         J(s,g)\ll s^{-2}\left(1+s\,d(g,H^\prime)\right)^{-1}.
    \end{align*}
   Here the implied constant only depends on the support of $b$ and the $L^\infty$-norms of $b$ and finitely many of its derivatives.
\end{proposition}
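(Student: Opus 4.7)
The plan is to view $J(s, g)$ as a $7$-dimensional oscillatory integral in the variables $(u, x_1, t_1, x_2, t_2) \in K_0 \times \BR^4$, with large parameter $s$ and phase
\[
\Phi(u, x_1, t_1, x_2, t_2; g) = A\bigl(u\,a(-t_1)n(-x_1)\,g\,n(x_2)a(t_2)\bigr),
\]
and to apply the method of stationary phase guided by a geometric description of the critical set. Parametrizing by $z_1 = n(x_1)a(t_1)\cdot o \in \BH^2$ and $z_2 = gn(x_2)a(t_2)\cdot o \in g\BH^2$, and using that $A(g)$ equals the logarithm of the height coordinate of $g\cdot o$ in the upper half-space model of $\BH^3$, the phase becomes the log-height of $u\cdot p$, where $p = a(-t_1)n(-x_1)\cdot z_2 \in \BH^3$ satisfies $d_{\BH^3}(o, p) = d_{\BH^3}(z_1, z_2)$.

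The first step is to identify the critical set of $\Phi$ geometrically. Stationarity in $u \in K_0$ requires $u$ to carry $p$ onto the geodesic through $o$ along the height axis, which picks out a $1$-parameter family in $K_0$ (a coset of the torus in $K_0$ stabilizing this axis). Given such a critical $u$, stationarity in the remaining four variables further requires that the $\BH^3$-geodesic from $z_1$ to $z_2$ meet $\BH^2$ at $z_1$ and $g\BH^2$ at $z_2$ orthogonally.

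In the base case $g \in H^\prime$, we have $\BH^2 = g\BH^2$. Since $\BH^2$ is totally geodesic in $\BH^3$, any non-degenerate geodesic perpendicular to $\BH^2$ at $z_1$ leaves $\BH^2$ and never returns, so perpendicularity at both endpoints forces $z_1 = z_2$. The full critical set is therefore the diagonal $\{(z, z) : z \in \BH^2\}$ together with the $1$-parameter family of critical $u$'s, a $3$-dimensional submanifold of $K_0 \times \BR^4$. I would compute the $\Phi$-Hessian on the $4$-dimensional normal bundle in coordinates adapted to the Iwasawa/Cartan decomposition $\fg_0 = \fn \oplus \fa \oplus \fk$ and verify nondegeneracy, so that the stationary-phase formula gives $J(s, g) \ll s^{-2}$.

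For general $g$, I would argue by perturbation and integration by parts. When $d(g, H^\prime) \lesssim s^{-1}$, a perturbation of the above analysis still yields $J(s, g) \ll s^{-2}$. When $d(g, H^\prime) \gtrsim s^{-1}$, the surfaces $\BH^2$ and $g\BH^2$ are genuinely distinct, so simultaneous perpendicularity at both endpoints can hold only on a lower-dimensional set; on a neighborhood of the would-be critical set one has a quantitative lower bound $|\nabla_{(z_1, z_2)}\Phi| \gtrsim d(g, H^\prime)$. Repeated integration by parts in a direction of non-vanishing gradient then produces the extra factor $(s\,d(g, H^\prime))^{-1}$, completing the bound $s^{-2}(1 + s\,d(g, H^\prime))^{-1}$. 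The main obstacle will be the Hessian computation on the $4$-dimensional normal bundle of the critical manifold, which requires a carefully chosen coordinate system adapted to $\fh \oplus \fh^{\perp}$; a secondary difficulty will be executing the integration-by-parts step uniformly in $g$, for which a partition of unity localizing near the would-be critical set combined with the quantitative lower bound on $|\nabla\Phi|$ will be essential.
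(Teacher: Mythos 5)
Your setup — viewing $J(s,g)$ as a $7$-dimensional oscillatory integral over $K_0\times\BR^4$ and identifying the critical set geometrically (stationarity in $u$ forces the point onto the vertical axis, stationarity in the remaining variables forces the connecting geodesic to meet $\BH^2$ and $g\BH^2$ orthogonally) — is exactly the paper's description of the critical points, and your identification of the $3$-dimensional critical manifold with nondegenerate transverse Hessian when $g\in H^\prime$ is correct and would indeed give $s^{-2}$ at such $g$.

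The gap is in the regime $0<d(g,H^\prime)\ll 1$. Your claim that "simultaneous perpendicularity can hold only on a lower-dimensional set" and that $|\nabla\Phi|\gtrsim d(g,H^\prime)$ near the would-be critical set is false: for $g$ arbitrarily close to $H^\prime$ but off it, the planes $\BH^2$ and $g\BH^2$ are typically ultraparallel (e.g.\ $g$ a short hyperbolic translation along a geodesic perpendicular to $\BH^2$), and then a common perpendicular exists, so the phase has \emph{genuine} critical points at which the gradient vanishes exactly; no integration-by-parts argument can produce the factor $(1+s\,d(g,H^\prime))^{-1}$ from a gradient lower bound that does not hold. What actually happens at these critical points is that the Hessian transverse to the $\operatorname{U}(1)$-orbit of critical $u$'s has determinant of size $d(g,H^\prime)^2$ (the paper computes it as $4e^{-2(t_1^\prime+t_2^\prime)}(1-e^{2h})^2$), reflecting the jump of the critical manifold from dimension $1$ to dimension $3$ as $g$ enters $H^\prime$; the whole difficulty of the proposition is obtaining stationary-phase bounds \emph{uniformly} through this degeneration, and neither your "perturbation" claim for $d(g,H^\prime)\lesssim s^{-1}$ nor the integration-by-parts step addresses that uniformity. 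The paper's route around this is structural: it first applies stationary phase only in the four variables $(x_1,t_1,x_2,t_2)$, whose Hessian is uniformly nondegenerate (diagonal with entries $\pm e^{-2\xi_i},\pm1$), gaining the factor $s^{-2}$ and reducing to an integral over $K_0$ with phase $\psi(k,g)$; it then observes that $\psi(\cdot,g)$ vanishes identically for $g\in H^\prime$, writes $g$ in terms of a transverse coordinate $Y\in\fsl(2,\BR)^\perp$ with $\|Y\|\sim d(g,H^\prime)$, and shows that $\psi/\|Y\|$ is uniformly Morse--Bott in $k$ (Proposition 5.13), so the $K_0$-integral is $\ll(1+s\,d(g,H^\prime))^{-1}$ by Morse--Bott stationary phase. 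Some device of this kind — factoring out the vanishing of the phase along $H^\prime$ before applying stationary phase in $k$, or a stationary-phase lemma with explicit uniform dependence on a degenerating Hessian — is needed to close your argument.
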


We shall use the method of stationary phase to estimate the oscillatory integral $J(s,g)$ as $s\to \infty$.
    The following is a known generalization of
    the stationary phase approximation to the case of Morse–Bott functions.  See e.g., \cite[\S4]{deVerdiere}. We say a function $F\in C^\infty(\BR^d)$ is Morse-Bott if $F$ is a smooth function whose critical set is a closed submanifold and whose Hessian in the directions transverse to the critical set is non-degenerate.
    
    \begin{proposition}\label{stationary phase}
        Let $\alpha,F\in C^\infty(\BR^d)$ with $\alpha$ of compact support.
        Suppose that $F$ is Morse-Bott and that the set of critical points of $F$ contained in the support of $\alpha$ form a connected submanifold $W\subset\BR^d$ with  $\dim W=e$. Then the oscillatory integral
        \begin{align*}
            \int_{\BR^d} \alpha(x) e^{itF(x)} dx
        \end{align*}
        is asymptotic as $t\rightarrow\infty$ to 
        \begin{align*}
            \left(\frac{2\pi}{t}\right)^{(d-e)/2}e^{itF(W)-\frac{i\pi}{4}\sigma} \int_W \alpha(x) \left| \operatorname{det}_W F^{\prime\prime}(x)\right|^{-1/2} dx + O(t^{-(d-e)/2 -1}).
        \end{align*}
        Here $F(W)$ is the value of $F(x)$ at any point $x\in W$, and $\sigma$ (resp. $\det_WF^{\prime\prime}$) is the signature (resp. determinant) of the Hessian of $F$ in the directions transverse to $W$. The implicit constant depends on $F$, $\supp(\alpha)$, the $L^\infty$-norms of $\alpha$ and finitely many of its derivatives.
    \end{proposition}
\subsection{Outline of the section}\label{outline of the section} 
Note that, as in \eqref{eq:defn of J(s,g)}, $J(s,g)$ is an oscillatory integral in the variables $x_1,t_1,x_2,t_2,u$, with the phase function $A(ua(-t_1)n(-x_1)g(n(x_2)a(t_2)) $. In 
Section \ref{sec cal for A}, we build up some preliminary calculations for the Iwasawa projection $A$.

We shall study the critical points of the phase function in Section \ref{sec cri pt}.
After making an appropriate change of variables, we write the phase function as in \eqref{defn of phi}:
\begin{align*}
    - A\left( kn(x_1)a(t_1) \right) + A\left( kgn(x_2)a(t_2) \right)
\end{align*}
with variables $x_1,t_1,x_2,t_2\in\BR$ and $k\in K_0$.
In Proposition \ref{critical points} and Corollary \ref{corollary of critical point description}, we give a geometric description of the set of critical points. 
More precisely, if $g\in G_0\backslash H^\prime$ is fixed, then a critical point exists exactly when the distance between $\BH^2$ and $g\BH^2$ is positive in $\BH^3$.
Moreover, $(x_1^\prime,t_1^\prime,x_2^\prime,t_2^\prime,k^\prime)$ is a critical point exactly when the geodesic segment $v_0$ joining the pair $ (x_1^\prime,e^{t_1^\prime})$ and $g.(x_2^\prime,e^{t_2^\prime})$ realizes the distance between $\BH^2$ and $ g\BH^2 $, and $k^\prime v_0$ is a vertical geodesic. In particular, the set of all critical points will form two pairs of $\mathrm{U}(1)$-orbits in $\BR^4\times K_0$.

Now we suppose that $g\in G_0\backslash H^\prime$ and $(x_1^\prime,t_1^\prime,x_2^\prime,t_2^\prime,k^\prime)$ is a critical point. In Section \ref{subsection hessians}, we study the Hessian in the directions transversal to the critical set. We first choose an appropriate local chart for $K_0$ through the exponential map as in \eqref{phi tilde}. Let $h$ be the signed distance from $\BH^2$ to $g\BH^2$. By direct computations, Proposition \ref{Hessian} shows that the determinant of the Hessian in the directions transverse to the critical set is $\asymp (1-e^{2h})^2$. Therefore, if $|h|\geq \delta$ for some fixed $\delta>0$, then Proposition \ref{stationary phase} implies that $J(s,g)\ll_\delta s^{-3}$. However, this method fails when $h$ tends to 0.

We treat the situation where $h$ is close to 0 in Sections \ref{sec degenerate case} and \ref{sec degenerate int est}. We introduce a new phase function $\psi$ defined on $K_0\times G_0$ except for a measure zero subset. Lemma \ref{integral product lemma} explains how an oscillatory integral as in \eqref{eq:defn of J(s,g)} can be reduced to an oscillatory integral over $K_0$ with the phase function $\psi(\cdot,g)$. Furthermore, Lemma \ref{lem critical pt of psi} and Corollary \ref{Hessian of psi} suggest that the set of critical points of $\psi(\cdot,g)$ becomes degenerate when $h\to 0$ as well. More generally, we may assume that $d(g,H^\prime)$ tends to 0. We can write $g=g_0 \exp(Y)$ with $g_0\in H^\prime$ and $Y\in\fsl(2,\BR)^\perp$. Here $\fsl(2,\BR)^\perp$ is the subspace perpendicular to the Lie algebra of $H^\prime$ in $\fsl(2,\BC)$, which is defined in \eqref{eq: sl2 perp}, and $\|Y\|$ tends to 0. We blow up the origin in $\fsl(2,\BR)^\perp$ by introducing polar coordinates for $Y$ and we construct a Morse-Bott phase function from $\psi(\cdot,g_0\exp(Y))$. Therefore, we can apply Proposition \ref{stationary phase} to bound the oscillatory integral over $K_0$ with the phase $\psi(\cdot,g_0\exp(Y))$. This technique is explained in Proposition \ref{eliminating degeneracy} and Lemma \ref{lemma of estimate in Lie algebra} in detail. 

Finally, in Section \ref{sec bound for J}, we use the results proved in Sections \ref{sec cri pt}--\ref{sec degenerate int est} to complete the proof of Proposition \ref{bound for J}.

\subsection{Calculations for $A$}\label{sec cal for A}
We begin with some calculations for the Iwasawa projection $A$. The relation \eqref{Poincare} implies the following formulas on $A$.
\begin{lemma}\label{lemma A}
    Given $g\in \SL(2,\BC)$ with the Iwasawa decomposition $g=nak$ where $a=a(t_0)\in A$ and $k =\begin{pmatrix} \alpha&\beta\\-\bar{\beta}&\bar{\alpha}\end{pmatrix}\in K_0$, we have
    \begin{align}\label{iwasawa height}
        A(gn(z)a(t)) = t_0 + t - \log\left(|\alpha|^2+|\beta|^2(|z|^2+e^{2t}) - \alpha\bar{\beta}z-\bar{\alpha}\beta \bar{z}  \right).
    \end{align}
    By taking derivatives with respect to $x,t\in\BR$, we obtain
    \begin{align}\label{derivative of A about n}
        \frac{\partial}{\partial x} A(gn(x))|_{x=0} = \alpha\bar{\beta} + \bar{\alpha}\beta
    \end{align}
    and
    \begin{align}\label{derivative of A about a}
        \frac{\partial}{\partial t} A(ga(t))|_{t=0} = |\alpha|^2-|\beta|^2.
    \end{align}
\end{lemma}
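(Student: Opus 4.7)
The plan is to reduce (\ref{iwasawa height}) to the evaluation of $g\cdot j$ where $j=(0,1)\in\BH^3$ is the base point, using that the $A$-component of the Iwasawa decomposition equals the logarithm of the second coordinate of this orbit point. First I would exploit the transformation properties of the function $A:G_0\to\BR$: since $A$ is left $N$-invariant (because $N(ng)=N(g)+N(n)$, $A(ng)=A(g)$, $\kappa(ng)=\kappa(g)$) and satisfies $A(a(t_0)g)=t_0+A(g)$, the Iwasawa decomposition $g=na(t_0)k$ gives immediately
\begin{align*}
A(gn(z)a(t))=t_0+A(kn(z)a(t)).
\end{align*}
So everything reduces to computing $A(kn(z)a(t))$.

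For that, I would use the Poincar\'e formula (\ref{Poincare}) directly. Applying $a(t)$ to $j=(0,1)$ gives $(0,e^{t})$; then $n(z)$ acts by horizontal translation to produce $(z,e^{t})$; finally applying $k=\begin{pmatrix}\alpha&\beta\\-\bar\beta&\bar\alpha\end{pmatrix}$ via (\ref{Poincare}) with $c=-\bar\beta$, $d=\bar\alpha$ gives for the denominator
\begin{align*}
|{-}\bar\beta z+\bar\alpha|^{2}+|\beta|^{2}e^{2t}=|\alpha|^{2}+|\beta|^{2}|z|^{2}-\alpha\bar\beta z-\bar\alpha\beta\bar z+|\beta|^{2}e^{2t},
\end{align*}
after expanding $(\bar\alpha-\bar\beta z)(\alpha-\beta\bar z)$. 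The second coordinate of $kn(z)a(t)\cdot j$ is therefore $e^{t}$ divided by this quantity, and taking $\log$ yields $A(kn(z)a(t))=t-\log(\cdots)$. Combined with the previous step this is exactly (\ref{iwasawa height}).

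The two derivative identities then follow by a direct differentiation of (\ref{iwasawa height}). Setting $z=x$ real and $t=0$, the argument of $\log$ becomes $|\alpha|^{2}+|\beta|^{2}(1+x^{2})-(\alpha\bar\beta+\bar\alpha\beta)x$, which at $x=0$ equals $|\alpha|^{2}+|\beta|^{2}=1$ because $k\in\SU(2)$, and whose $x$-derivative at $x=0$ is $-(\alpha\bar\beta+\bar\alpha\beta)$; dividing by $1$ and flipping the sign coming from the $-\log$ gives (\ref{derivative of A about n}). Similarly, with $z=0$ and variable $t$, the argument is $|\alpha|^{2}+|\beta|^{2}e^{2t}$, and the $t$-derivative of $t-\log(|\alpha|^{2}+|\beta|^{2}e^{2t})$ at $t=0$ is $1-2|\beta|^{2}=|\alpha|^{2}-|\beta|^{2}$, again using $|\alpha|^{2}+|\beta|^{2}=1$; this is (\ref{derivative of A about a}).

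There is no real obstacle here; the main substantive input is the explicit Poincar\'e action formula (\ref{Poincare}) and the $\SU(2)$ relation $|\alpha|^{2}+|\beta|^{2}=1$, and the only thing one must be careful about is the order of operations in the Iwasawa decomposition so that the left $N$-invariance and left $A$-covariance of $A$ strip off $n$ and $a(t_0)$ cleanly before the Poincar\'e calculation is carried out.
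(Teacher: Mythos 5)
Your proposal is correct and follows essentially the same route the paper intends: the lemma is derived directly from the Poincar\'e action formula (\ref{Poincare}) by tracking the height coordinate of the base point $(0,1)\in\BH^3$ (equivalently, stripping off $n$ and $a(t_0)$ by the left $N$-invariance and $A$-equivariance of $A$, then computing $kn(z)a(t)\cdot(0,1)$), and the two derivative identities follow by direct differentiation using $|\alpha|^2+|\beta|^2=1$. No gaps.
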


For simplicity, we define $\Psi$ and $\Theta$ to be the functions on $K_0$ given by sending $k =\begin{pmatrix} \alpha&\beta\\-\bar{\beta}&\bar{\alpha}\end{pmatrix}\in K_0$ to $\alpha\bar{\beta} + \bar{\alpha}\beta$ and $|\alpha|^2-|\beta|^2$ respectively. Therefore, \eqref{derivative of A about n} and \eqref{derivative of A about a} can be read as
\begin{align*}
    \frac{\partial}{\partial x} A(gn(x))|_{x=0} = \Psi(\kappa(g))\quad\text{ and }\quad \frac{\partial}{\partial t} A(ga(t))|_{t=0} = \Theta(\kappa(g)).
\end{align*}
Recall that $\kappa:G_0\to K_0$ is the projection given by the Iwasawa decomposition $G_0=NAK_0$ and $l\subset \BH^3$ is the geodesic as in \eqref{defn of l}.

\begin{proposition}\label{crtical for x}
    Suppose $k\in K_0$. The following conditions are equivalent
    \begin{enumerate}
        \item $\frac{\partial}{\partial x}A(kn(x))|_{x=0} = \frac{\partial}{\partial t}A(ka(t))|_{t=0} =0$;
        \item $\Psi(k) = \Theta(k) = 0$;
        \item Either $k = \begin{pmatrix}e^{it}&0\\0&e^{-it}\end{pmatrix}\begin{pmatrix}1/\sqrt{2}& i/\sqrt{2}\\i/\sqrt{2}&1/\sqrt{2}\end{pmatrix}$, or $k = \begin{pmatrix}e^{it}&0\\0&e^{-it}\end{pmatrix}\begin{pmatrix}1/\sqrt{2}& -i/\sqrt{2}\\-i/\sqrt{2}&1/\sqrt{2}\end{pmatrix}$ where $t \in \BR/2\pi\BZ$;
        \item $l$ is perpendicular to $k\BH^2$ at $(0,1)$.
    \end{enumerate}
\end{proposition}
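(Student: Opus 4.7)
The plan is to establish the equivalences (1)$\iff$(2), (2)$\iff$(3), and (2)$\iff$(4) one at a time, each of which is a direct consequence of either Lemma \ref{lemma A}, an $\SU(2)$ normalization, or a geometric interpretation at the basepoint.

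For (1)$\iff$(2), I would observe that when $k\in K_0$ its own Iwasawa decomposition is $k=n(0)a(0)k$, so $\kappa(k)=k$ and $A(k)=0$. Substituting $g=k$ in the formulas (\ref{derivative of A about n}) and (\ref{derivative of A about a}) of Lemma \ref{lemma A} then gives $\frac{\partial}{\partial x}A(kn(x))|_{x=0}=\Psi(k)$ and $\frac{\partial}{\partial t}A(ka(t))|_{t=0}=\Theta(k)$, which settles the equivalence immediately.

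Next, for (2)$\iff$(3), I would perform an explicit computation in $\SU(2)$. Writing $k=\begin{pmatrix}\alpha&\beta\\-\bar\beta&\bar\alpha\end{pmatrix}$ with $|\alpha|^2+|\beta|^2=1$, the equation $\Theta(k)=|\alpha|^2-|\beta|^2=0$ forces $|\alpha|=|\beta|=1/\sqrt{2}$, and then $\Psi(k)=2\operatorname{Re}(\alpha\bar\beta)=0$ says $\alpha\bar\beta$ is purely imaginary. Writing $\alpha=e^{it}/\sqrt{2}$ and factoring out the diagonal $\operatorname{diag}(e^{it},e^{-it})$ on the left reduces $k$ to the form $\begin{pmatrix}1/\sqrt{2}&\pm i/\sqrt{2}\\\pm i/\sqrt{2}&1/\sqrt{2}\end{pmatrix}$, which is exactly the normal form listed in (3); conversely the two matrices displayed in (3) visibly satisfy (2).

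The main step will be the geometric interpretation (2)$\iff$(4), and this is where the real content lies. The plan is to recognize $\Psi(k)$ and $\Theta(k)$ as the vertical components of the tangent plane of $k\BH^2$ at $(0,1)$. Since $k$ fixes $(0,1)$ and $\BH^2=N_0 A\cdot(0,1)$, that tangent plane is spanned by the velocities of the curves $x\mapsto kn(x)\cdot(0,1)$ and $t\mapsto ka(t)\cdot(0,1)$. The hyperbolic metric at the basepoint $(0,1)$ is Euclidean in the coordinates $(x,y,t)$, and the tangent to $l$ at $(0,1)$ is the vertical direction $\partial_t$, so $l\perp k\BH^2$ at $(0,1)$ iff both of these velocities have vanishing $t$-component. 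From the formula (\ref{Poincare}), the $t$-coordinate of $g\cdot(0,1)$ equals $\exp A(g)$, so combined with $A(k)=0$ the two vertical components are precisely $\frac{\partial}{\partial x}A(kn(x))|_{x=0}=\Psi(k)$ and $\frac{\partial}{\partial t}A(ka(t))|_{t=0}=\Theta(k)$. Everything else is bookkeeping; the only care needed is to notice that the hyperbolic inner product at $(0,1)$ agrees with the Euclidean one, so that ``perpendicular'' really does translate to vanishing of the $\partial_t$-component.
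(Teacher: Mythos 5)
Your proposal is correct. The equivalences (1)$\iff$(2) (via Lemma \ref{lemma A} applied with $g=k$, using $\kappa(k)=k$) and (2)$\iff$(3) (solving $|\alpha|^2+|\beta|^2=1$, $|\alpha|^2-|\beta|^2=0$, $\alpha\bar\beta+\bar\alpha\beta=0$) are exactly the paper's arguments. The only divergence is in the geometric step: the paper connects (3) to (4), observing that the matrices $\begin{pmatrix}1/\sqrt2&\pm i/\sqrt2\\ \pm i/\sqrt2&1/\sqrt2\end{pmatrix}$ rotate the normal vector of $\BH^2$ at $(0,1)$ to the vertical direction while $\operatorname{diag}(e^{it},e^{-it})$ rotates $\BH^3$ about $l$, so the explicit normal form of (3) is used as the bridge to perpendicularity. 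You instead connect (2) directly to (4): since $k$ fixes $(0,1)$, the plane $T_{(0,1)}k\BH^2$ is spanned by the velocities of $x\mapsto kn(x)\cdot(0,1)$ and $t\mapsto ka(t)\cdot(0,1)$; the $t$-coordinate of $g\cdot(0,1)$ is $e^{A(g)}$, so (with $A(k)=0$) the vertical components of these two velocities are exactly $\Psi(k)$ and $\Theta(k)$, and since the hyperbolic metric at $(0,1)$ coincides with the Euclidean one, perpendicularity of $l$ (tangent $\partial_t$) to $k\BH^2$ is precisely their vanishing. Both routes are valid; yours bypasses the explicit matrices in the geometric step and makes (1)$\iff$(4) transparent without passing through (3), at the cost of spelling out the identification of $e^{A(\cdot)}$ with the height coordinate, while the paper's version is shorter once (3) is in hand.
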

\begin{proof}
    The equivalence of $(a)$ and $(b)$ follows from Lemma \ref{lemma A}.
    The equivalence of $(b)$ and $(c)$ is obtained by solving $|\alpha|^2+|\beta|^2=1$, $|\alpha|^2-|\beta|^2=0$, and $\alpha\bar{\beta} +\bar{\alpha}\beta=0$. The equivalence of $(c)$ and $(d)$ can be seen from looking at the tangent space of $(0,1)$ in $\BH^2$.
    The matrices $\begin{pmatrix}1/\sqrt{2}& \pm i/\sqrt{2}\\\pm i/\sqrt{2}&1/\sqrt{2}\end{pmatrix}$ rotate the normal vector to the vertical directions and notice that $\begin{pmatrix}e^{it}&0\\0&e^{-it}\end{pmatrix}$ acts on $\BH^3$ by rotating around $l$ by angle $2t$.
\end{proof}
We will fix a basis for $\fk$
\begin{align*}
    X_1 = \begin{pmatrix}0&i\\i&0\end{pmatrix}, \, X_2 = \begin{pmatrix}0&-1\\1&0\end{pmatrix}, \, X_3 = \begin{pmatrix}i&0\\0&-i\end{pmatrix},
\end{align*}
and use these notations in the rest of this paper.
\begin{proposition}\label{critical for k}
    An element $g\in G_0$ lies in $AK_0$ if and only if $\frac{\partial}{\partial t} A(\exp(tX)g)|_{t=0} = 0$ for arbitrary $X\in\fk$.
\end{proposition}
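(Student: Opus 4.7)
The plan is to reinterpret $A$ geometrically as a height on $\BH^3$, turning the vanishing condition into a critical-point condition on a $K_0$-orbit, and then identifying the critical locus using the vertical geometry of the upper half-space model.

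First I would observe that from the Iwasawa decomposition $g = n(z)a(s)k$ and the action formula (\ref{Poincare}), a direct computation yields $g\cdot(0,1) = (z,e^s)$, so $A(g) = \log h(g\cdot(0,1))$, where $h(z,t)=t$ is the height function on $\BH^3$. Setting $p := g\cdot(0,1)$, this gives
\begin{align*}
    \frac{\partial}{\partial t}A(\exp(tX)g)\Big|_{t=0} = \frac{1}{h(p)}\frac{\partial}{\partial t}h(\exp(tX)\cdot p)\Big|_{t=0},
\end{align*}
so the vanishing condition for all $X\in\fk$ is equivalent to $p$ being a critical point of $h$ restricted to the orbit $K_0\cdot p$.

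Next I would identify that orbit and its critical points. Since $K_0 = \SU(2)$ fixes $(0,1)$ and acts on $\BH^3$ by isometries, $K_0\cdot p$ lies on the hyperbolic sphere $S$ centered at $(0,1)$ of radius $d(p,(0,1))$; by transitivity of $K_0$ on unit tangent directions at $(0,1)$, the orbit equals $S$. In the upper half-space model, $S$ is a round Euclidean sphere centered on the vertical axis $l$, and the height $h = t$ restricted to $S$ has exactly two critical points, namely the top and bottom of $S$, which comprise $S\cap l$. Hence the vanishing condition forces $p\in l$.

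Finally, $g\cdot(0,1)\in l$ means $g\cdot(0,1) = (0,e^s) = a(s)\cdot(0,1)$ for some $s\in\BR$, equivalently $a(s)^{-1}g$ lies in the stabilizer $K_0$ of $(0,1)$, i.e.\ $g\in AK_0$; the converse direction follows by reading the chain of equivalences backward. The main obstacle is the critical-point analysis of $h|_S$: though geometrically transparent, it should be verified carefully using the explicit fact that hyperbolic spheres around $(0,1)$ appear as round Euclidean spheres centered on $l$. If a purely algebraic argument is preferred, one can instead use the identity $\exp(tX)g = g\exp(t\,\Ad(g^{-1})X)$, decompose $\Ad(g^{-1})X$ with respect to $\fg=\fn\oplus\fa\oplus\fk$, and apply Lemma \ref{lemma A} to reduce the condition to the vanishing of the $\fa$-component of $\Ad(g^{-1})X$ for every $X\in\fk$.
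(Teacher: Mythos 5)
Your argument is correct, and it takes a genuinely different route from the paper's. The paper proves this proposition by direct computation: using the right $K_0$-invariance of $A$ it reduces to $g=a(t_0)$ for the forward direction and to $g=n(z_0)a(t_0)$ with $z_0\neq 0$ for the converse, and then evaluates $\frac{\partial}{\partial t}A(\exp(tX_i)g)|_{t=0}$ for the basis $X_1,X_2,X_3$ of $\fk$ via the explicit height formula of Lemma \ref{lemma A}, the $X_2$- and $X_1$-derivatives forcing $z_0+\bar z_0=0$ and then $2y_0=0$, a contradiction. You instead interpret $A(g)$ as $\log h(g\cdot(0,1))$, so that the hypothesis says $p=g\cdot(0,1)$ is a critical point of the height on the orbit $K_0\cdot p$, which is the hyperbolic sphere about $(0,1)$, i.e.\ a Euclidean sphere centered on the vertical axis $l$; its height-critical points are exactly its top and bottom, forcing $p\in l$, which is equivalent to $g\in AK_0$ because $K_0$ is the stabilizer of $(0,1)$. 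This is sound (the degenerate case $p=(0,1)$, where the orbit is a single point, is trivially consistent), and it has the merit of making explicit the geometric content that the paper only uses implicitly later: in the proof of Proposition \ref{critical points}, membership in $AK_0$ is immediately translated into the statement that a point lies on the vertical geodesic $l$, which is precisely your equivalence. The paper's computation is shorter and needs nothing beyond Lemma \ref{lemma A}, while yours relies on the standard facts that the stabilizer of $(0,1)$ is exactly $K_0$ and that hyperbolic spheres centered at points of $l$ are Euclidean spheres centered on $l$. One caution about your optional algebraic aside: since $A(na\,x)=A(na)+A(x)$ but $\Ad(\kappa(g))$ does not respect the splitting $\fg=\fn\oplus\fa\oplus\fk$, the derivative $\frac{\partial}{\partial t}A(\exp(tX)g)|_{t=0}$ equals the $\fa$-component of $\Ad(\kappa(g)g^{-1})X$, equivalently of $\Ad\bigl((n(g)\exp(A(g)))^{-1}\bigr)X$, not of $\Ad(g^{-1})X$ as stated; that sketch would need this correction, but your main geometric argument does not depend on it.
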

\begin{proof}
    Without loss of generality, assume $g = a(t_0)\in A$. We only have to check $\frac{\partial}{\partial t} A(\exp(tX)g)|_{t=0} = 0$ holds for a basis of $\fk$.  By (\ref{iwasawa height}), we have $A(\exp(tX_1)a(t_0)) =A(\exp(tX_2)a(t_0)) = t_0 -\log(\cos^2t + e^{2t}\sin^2t)$ and $A(\exp(tX_3)a(t_0))=t_0 $, so $\frac{\partial}{\partial t} A(\exp(tX_i)g)|_{t=0} = 0$ with $i=1,2,3$.
    
    Conversely, we suppose $g\notin AK$ but $\frac{\partial}{\partial t} A(\exp(tX)g)|_{t=0} = 0$ for arbitrary $X\in\fk$. We can assume that $g = n(z_0)a(t_0)$ with $z_0\neq 0$. By (\ref{iwasawa height}), we have
    \begin{align*}
        \frac{\partial}{\partial t} A(\exp(tX_2)g)|_{t=0} &=  \frac{\partial}{\partial t}\left[ t_0  - \log\left(\cos^2(t)+\sin^2(t)(|z_0|^2+e^{2t_0}) + \cos (t)\sin (t)(z_0+\bar{z}_0) \right)\right]|_{t=0}\\
        &=z_0 + \bar{z}_0 = 0,
    \end{align*}
    implying $z_0 = iy_0\neq 0$ is purely imaginary. But
    \begin{align*}
        \frac{\partial}{\partial t} A(\exp(tX_1)g)|_{t=0} &=  \frac{\partial}{\partial t}\left[ t_0  - \log\left(\cos^2(t)+\sin^2(t)(y_0^2+e^{2t_0}) + 2\cos (t)\sin (t)y_0 \right)\right]|_{t=0}\\
        &=2y_0 = 0
    \end{align*}
    gives a contradiction.
\end{proof}

We show that the right translation by $G_0$ will induce an action of $G_0$ on $K_0$. For $g\in G_0$, let $\Phi_g: K_0\to K_0$ be the map sending $k$ to $\kappa(kg)$, i.e $kg\in NA\Phi_g(k)$. 
\begin{lemma}\label{change of variable}
    $g\mapsto\Phi_g$ is a smooth group action of $G_0$ on $K_0$ from right.
\end{lemma}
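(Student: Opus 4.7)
The plan is a routine verification in three steps: identity, composition, and smoothness. The key structural fact to exploit is that $NA$ is a subgroup of $G_0$ (since $A$ normalizes $N$), so $NA \cdot NA = NA$, which makes the Iwasawa projection $\kappa$ compatible with right multiplication inside $NA$-cosets.

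First, I would check the identity axiom: for any $k \in K_0$ we have $ke = k \in NAK_0$ with trivial $N$- and $A$-parts, so $\Phi_e(k) = \kappa(k) = k$. Next, for composition, take $g_1, g_2 \in G_0$ and $k \in K_0$. By definition, $kg_1 = n_1 a_1 \Phi_{g_1}(k)$ for some $n_1 \in N$, $a_1 \in A$, and similarly $\Phi_{g_1}(k)\, g_2 = n_2 a_2 \Phi_{g_2}(\Phi_{g_1}(k))$. Then
\begin{align*}
kg_1 g_2 \;=\; n_1 a_1 n_2 a_2\, \Phi_{g_2}(\Phi_{g_1}(k)) \;=\; \bigl(n_1\, (a_1 n_2 a_1^{-1})\bigr)\,(a_1 a_2)\, \Phi_{g_2}(\Phi_{g_1}(k)),
\end{align*}
where $a_1 n_2 a_1^{-1} \in N$ since $A$ normalizes $N$. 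Thus the right-hand side exhibits an Iwasawa decomposition of $kg_1 g_2$, forcing $\Phi_{g_1 g_2}(k) = \kappa(kg_1 g_2) = \Phi_{g_2}(\Phi_{g_1}(k))$. This is exactly the condition for a right action.

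Finally, smoothness follows from the standard fact that the Iwasawa decomposition $G_0 = NAK_0$ yields a diffeomorphism $N \times A \times K_0 \to G_0$, so the projection $\kappa : G_0 \to K_0$ is smooth. Composing with the smooth multiplication map $(k,g) \mapsto kg$ shows that $(k,g) \mapsto \Phi_g(k)$ is smooth as a map $K_0 \times G_0 \to K_0$.

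There is no real obstacle here; the only point worth flagging is the convention that the action is from the right, which is why the normalization identity reads $\Phi_{g_1 g_2} = \Phi_{g_2} \circ \Phi_{g_1}$ rather than the other order, and this is forced by applying $\kappa$ to the product $kg_1 g_2$ read left-to-right.
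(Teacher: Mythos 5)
Your proof is correct and follows essentially the same route as the paper: both rest on the smoothness of the Iwasawa decomposition for smoothness of $\kappa$, and both verify $\Phi_{g_1g_2}=\Phi_{g_2}\circ\Phi_{g_1}$ using that $NA$ is a subgroup. The only cosmetic difference is that you write the composition law out elementwise (using that $A$ normalizes $N$), whereas the paper phrases it through the identification $K_0\simeq NA\backslash G_0$ and manipulation of $NA$-cosets.
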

\begin{proof}
    The smoothness of the Iwasawa decomposition implies that $\Phi_g$ is smooth and depends smoothly on $g$. 
     We identify $K_0$ with the quotient $NA\backslash G_0$ via the Iwasawa decomposition. Then $\Phi_g$ is obtained by composing the diffeomorhpism $K_0\to NA\backslash G_0$, the right multiplication by $\cdot g:NA\backslash G_0\to NA\backslash G_0$ and the diffeomorphism $ NA\backslash G_0\to K_0$. It remains to show $\Phi_{gh} = \Phi_h\circ\Phi_g$ for $g,h\in G_0$. By definition
     \begin{align*}
         kgh\in NA\Phi_{gh}(k)\\
         kg\in NA\Phi_g(k),
     \end{align*}
     which implies
     \begin{align*}
         NA\Phi_g(k) h = NA\Phi_{gh}(k),\\
         \Phi_g(k) h \in NA\Phi_{gh}(k),
     \end{align*}
     that is $\Phi_h\left(  \Phi_g(k) \right) = \Phi_{gh}(k)$.
\end{proof}

The following lemma will help us to rewrite the phase function of $J(s,g)$  so that it is easier to find its critical points.
\begin{lemma}\label{spliting A}
    Let $y,z\in G_0$ and let $k\in K_0$. Then we have
    \begin{align*}
        A(ky^{-1}z) = A\left(\Phi_{y^{-1}}(k)z\right)- A\left(\Phi_{y^{-1}}(k)y\right).
    \end{align*}
\end{lemma}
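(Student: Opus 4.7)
The plan is to derive the identity from two elementary cocycle-type properties of the Iwasawa height function $A:G_0\to\BR$, combined with the very definition of the action $\Phi$. Specifically, for any $g,h\in G_0$, the Iwasawa decomposition gives $g=n(g)\exp(A(g)H)\kappa(g)$, and since $N$ is normalized by $A$, commuting the $A$-factor past any $N$-factor contributes nothing to the height. Applying this observation twice yields the cocycle
\begin{equation*}
A(gh)=A(g)+A(\kappa(g)h),
\end{equation*}
which is the first tool I need.

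Applying the cocycle with $g=ky^{-1}$ and $h=z$, and using that $\kappa(ky^{-1})=\Phi_{y^{-1}}(k)$ by the definition of $\Phi$, immediately gives
\begin{equation*}
A(ky^{-1}z)=A(ky^{-1})+A(\Phi_{y^{-1}}(k)z).
\end{equation*}
Thus the lemma will follow once I establish the auxiliary identity
\begin{equation*}
A(\Phi_{y^{-1}}(k)y)=-A(ky^{-1}).
\end{equation*}

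To prove this auxiliary identity, I would start from the Iwasawa decomposition $ky^{-1}=n_0\exp(A(ky^{-1})H)\Phi_{y^{-1}}(k)$, solve for $\Phi_{y^{-1}}(k)$, and right-multiply by $y$ to obtain
\begin{equation*}
\Phi_{y^{-1}}(k)\,y=\exp(-A(ky^{-1})H)\,n_0^{-1}\,k.
\end{equation*}
Conjugating $n_0^{-1}$ past the $A$-factor (which only rescales it inside $N$) places this in the form $n'\exp(-A(ky^{-1})H)k$, which is already an Iwasawa decomposition since $k\in K_0$. Reading off the $A$-component gives the claim, and combining the two displayed equations completes the proof.

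I expect no serious obstacle; the only subtle point is tracking signs and being careful that $\Phi_{y^{-1}}(k)y$ ends up \emph{without} further action of $\Phi$, which is precisely what happens because $k\in K_0$ means the final factor already lies in $K_0$ after the commutation step.
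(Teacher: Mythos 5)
Your proof is correct and follows essentially the same route as the paper: both start from the Iwasawa decomposition $ky^{-1}\in NA\,\Phi_{y^{-1}}(k)$, use the cocycle/left-$NA$ behavior of the height $A$ to split off $A\left(\Phi_{y^{-1}}(k)z\right)$, and then identify the remaining term as $-A\left(\Phi_{y^{-1}}(k)y\right)$ by inverting the $NA$-factor. Your auxiliary identity $A\left(\Phi_{y^{-1}}(k)y\right)=-A(ky^{-1})$ is exactly the paper's step $A(na)=-A\left((na)^{-1}k\right)$ written out explicitly, so the two arguments coincide.
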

\begin{proof}
    Let $ky^{-1} = na\Phi_{y^{-1}}(k)$. We have
    \begin{align*}
        A(ky^{-1}z) &= A\left( na\Phi_{y^{-1}}(k)z\right)\\
        &=A(na) + A\left( \Phi_{y^{-1}}(k)z\right)\\
        &=A\left( \Phi_{y^{-1}}(k)z\right) - A\left((na)^{-1}k\right)\\
        &=A\left( \Phi_{y^{-1}}(k)z\right) - A\left(\Phi_{y^{-1}}(k)y\right).
    \end{align*}
\end{proof}

\subsection{Critical Points}\label{sec cri pt}
Applying Lemma \ref{change of variable} and Lemma \ref{spliting A}, and writing $k = \Phi_{a(-t_1)n(-x_1)}(u)$, we can rewrite the integral $J(s,g)$ as 
\begin{align*}
&J(s,g)= \int_{\BR^4}\int_{K_0} b_0(x_1,t_1,x_2,t_2,u) \exp(isA(ua(-t_1)n(-x_1)gn(x_2)a(t_2)))  du   dx_1 dt_1 dx_2  dt_2\\
    =&\int_{\BR^4}\int_{K_0} b_0(x_1,t_1,x_2,t_2,u) \exp\left(is\left(-A(kn(x_1)a(t_1))+A(kgn(x_2)a(t_2))\right)\right)  du   dx_1 dt_1 dx_2 dt_2\\
    =&\int_{\BR^4}\int_{K_0} b_0(x_1,t_1,x_2,t_2,u)\left|\det\mathbf{J}\Phi_{n(x_1)a(t_1)}(k)\right|\exp\left(is\left(-A(kn(x_1)a(t_1))+A(kgn(x_2)a(t_2))\right)\right) dk   dx_1 dt_1 dx_2 dt_2. 
\end{align*}
To simplify the notation, we let $b(x_1,t_1,x_2,t_2,k)=b_0(x_1,t_1,x_2,t_2,\Phi_{n(x_1)a(t_1)}(k))\left|\det\mathbf{J}\Phi_{n(x_1)a(t_1)}(k)\right|$, and define $\phi$ as a function on $\BR^4 \times K_0 \times G_0$ by
\begin{align}\label{defn of phi}
    \phi(x_1,t_1,x_2,t_2,k,g) = - A\left( kn(x_1)a(t_1) \right) + A\left( kgn(x_2)a(t_2) \right).
\end{align}
We will omit the variable $g$ when it is fixed. Hence,
\begin{align*}
    J(s,g) = \int_{\BR^4}\int_{K_0} b(x_1,t_1,x_2,t_2,k)\exp(is\phi(x_1,t_1,x_2,t_2,k,g))  dk   dx_1 dt_1 dx_2 dt_2.
\end{align*}
Combining Proposition \ref{crtical for x} and Proposition \ref{critical for k}, we can describe the set of critical points of $\phi$.
\begin{proposition}\label{critical points}
    Suppose $g$ is fixed.
    The phase function $\phi$ has a critical point at $(x_1,t_1,x_2,t_2,k)$ exactly when $k.(x_1 ,e^{t_1})$ and $kg.(x_2,e^{t_2})$ lie on the same vertical geodesic $v$, and $v$ is perpendicular to $k\BH^2$ at $k.(x_1 ,e^{t_1})$ and to $kg\BH^2$ at $kg.(x_2,e^{t_2})$.
\end{proposition}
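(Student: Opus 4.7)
The plan is to check the vanishing of the partial derivatives of $\phi$ coordinate group by coordinate group, and then translate each condition into geometric language using the fact that $A(g)$ is the logarithmic height of $g\cdot(0,1)$ in $\BH^3$. In particular, writing $P_1 := k(x_1,e^{t_1})$ and $P_2 := kg(x_2,e^{t_2})$, one has
\begin{align*}
    \phi \;=\; -\log\mathrm{ht}(P_1) + \log\mathrm{ht}(P_2),
\end{align*}
so vanishing of the $(x_1,t_1)$-partials will say that the log-height is stationary along $k\BH^2$ at $P_1$, vanishing of $(x_2,t_2)$-partials similarly at $P_2$ on $kg\BH^2$, and the $K_0$-partial will link the two points.

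For the $(x_1,t_1)$-block, I would use the chain-rule identities $\partial_{x_1}(kn(x_1)a(t_1)) = kn(x_1)a(t_1)\cdot e^{-t_1}X$ and $\partial_{t_1}(kn(x_1)a(t_1)) = kn(x_1)a(t_1)\cdot H$ together with Lemma \ref{lemma A} to obtain
\begin{align*}
    \partial_{x_1} A(kn(x_1)a(t_1)) = e^{-t_1}\,\Psi\!\left(\kappa(kn(x_1)a(t_1))\right),\qquad \partial_{t_1} A(kn(x_1)a(t_1)) = \Theta\!\left(\kappa(kn(x_1)a(t_1))\right).
\end{align*}
By Proposition \ref{crtical for x}, these both vanish iff the vertical geodesic $l$ is perpendicular to $\kappa(kn(x_1)a(t_1))\BH^2$ at $(0,1)$. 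Conjugating by the $NA$-part of the Iwasawa decomposition of $kn(x_1)a(t_1)$ — which sends $(0,1)$ to $P_1$, sends $l$ to the vertical geodesic through $P_1$, and sends $\kappa(kn(x_1)a(t_1))\BH^2$ to $kn(x_1)a(t_1)\BH^2 = k\BH^2$ (the last equality because $n(x_1)a(t_1)\in H_0$ preserves $\BH^2$) — this becomes perpendicularity of the vertical geodesic through $P_1$ and $k\BH^2$ at $P_1$. An identical argument for $(x_2,t_2)$ gives the perpendicularity condition at $P_2$ with $kg\BH^2$.

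For the $K_0$-variable, I would vary $k \mapsto \exp(\epsilon X)k$ for $X\in\fk$, which gives
\begin{align*}
    \partial_\epsilon \phi\big|_{\epsilon=0} \;=\; -\,d\log\mathrm{ht}\big|_{P_1}(\tilde X) + d\log\mathrm{ht}\big|_{P_2}(\tilde X),
\end{align*}
where $\tilde X$ is the Killing field on $\BH^3$ induced by $X$. Using the basis $\{X_1,X_2,X_3\}$ of $\fk$ and the formula (\ref{Poincare}), a direct calculation at $P=(z,t)$ yields
\begin{align*}
    d\log\mathrm{ht}|_P(\tilde X_1) = 2\,\Im(z), \qquad d\log\mathrm{ht}|_P(\tilde X_2) = -2\,\Re(z),\qquad d\log\mathrm{ht}|_P(\tilde X_3) = 0,
\end{align*}
which depends only on the $z$-coordinate of $P$. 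Consequently the $K_0$-partial vanishes in every direction iff $z_1=z_2$, i.e.\ iff $P_1$ and $P_2$ lie on a common vertical geodesic $v$.

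Combining the three conditions gives both implications of the proposition. The main obstacle is the $K_0$-variation: Proposition \ref{critical for k} only identifies when each of the two $A$-derivatives is \emph{individually} zero (i.e.\ when $P_1,P_2 \in l$), which is strictly stronger than their cancellation; so the step must be carried out by the explicit infinitesimal computation above rather than by a clean structural appeal.
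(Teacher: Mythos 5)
Your proof is correct, and the $(x_i,t_i)$-blocks are handled exactly as in the paper: reduce to $\Psi(\kappa(\cdot))=\Theta(\kappa(\cdot))=0$ via Lemma \ref{lemma A}, invoke Proposition \ref{crtical for x}, and transport the perpendicularity statement from $(0,1)$ to $k(x_1,e^{t_1})$ (resp.\ $kg(x_2,e^{t_2})$) by the $NA$-part of the Iwasawa decomposition. Where you genuinely diverge is the $K_0$-variation, and there your closing remark slightly misjudges the paper: the cancellation problem you flag is real for a naive term-by-term application of Proposition \ref{critical for k}, but the paper sidesteps it structurally. It changes variables $k\mapsto u=\Phi_{n(x_1)a(t_1)}(k)$ (a diffeomorphism by Lemma \ref{change of variable}) and uses the splitting identity of Lemma \ref{spliting A} to rewrite the phase as the single height function $A\left(u\,a(-t_1)n(-x_1)gn(x_2)a(t_2)\right)$, to which Proposition \ref{critical for k} applies directly; criticality in $u$ becomes $u_1'a(-t_1')n(-x_1')gn(x_2')a(t_2')\in AK_0$, which translates back into $kg(x_2,e^{t_2})$ lying on the vertical geodesic through $k(x_1,e^{t_1})$ --- the same conclusion you reach. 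Your substitute is an explicit infinitesimal computation: for $X\in\fk$ with lower row $(c',d')$, formula (\ref{Poincare}) gives $d\log\mathrm{ht}|_{(z,t)}(\tilde X)=-2\Re(c'z+d')$, so the three basis directions yield $2\Im z$, $-2\Re z$, $0$, which depend only on the horizontal coordinate; hence the difference of the contributions at the two points vanishes for all of $\fk$ exactly when $z_1=z_2$. I checked this computation and it is right, and it has the virtue of being self-contained and of making transparent why only the horizontal coordinates of the two points matter. What the paper's route buys instead is reuse of infrastructure: the $\Phi$-change of variable and the reduction to a single Iwasawa height are precisely the tools needed again in the Hessian computations of Section \ref{subsection hessians}, so the paper's proof of this proposition doubles as setup for what follows, whereas your argument, while cleaner in isolation, would still leave that machinery to be developed separately.
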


\begin{proof}
    Suppose that $(x_1^\prime,t_1^\prime,x_2^\prime,t_2^\prime,k^\prime)\in\BR^4\times K_0$ is a critical point of $\phi$. We define $n_1^\prime\in N,a_1^\prime\in A$ and $u_1^\prime \in K_0$ by
    \begin{align*}
        k^\prime n(x_1^\prime)a(t_1^\prime) = n_1^\prime a_1^\prime u_1^\prime .
    \end{align*}
    It may be seen that $v_1:=n_1^\prime l$ is the vertical geodesic through $k^\prime .(x_1^\prime,e^{t_1^\prime })$.  Recall that here $l$ is the vertical geodesic as in \eqref{defn of l}.   By Proposition \ref{crtical for x},
    \begin{align*}
        \frac{\partial\phi}{\partial x_1}(x_1^\prime,t_1^\prime,x_2^\prime,t_2^\prime,k^\prime)= \frac{\partial\phi}{\partial t_1}(x_1^\prime,t_1^\prime,x_2^\prime,t_2^\prime,k^\prime)=0
    \end{align*}
    implies that $l$ is perpendicular to $u_1^\prime \BH^2$ at $(0,1)$, and so $v_1 = n_1^\prime a_1^\prime  l$ is perpendicular to  $n_1^\prime a_1^\prime u_1^\prime\BH^2 = k^\prime\BH^2$ at
    $n_1^\prime a_1^\prime u_1^\prime .(0,1) =k^\prime .(x_1^\prime,e^{t_1^\prime })$.
    The calculation of $\partial/\partial x_2, \partial/\partial t_2$ is identical. If we define 
    $n_2^\prime\in N,a_2^\prime\in A$ and $u_2^\prime \in K_0$ by
    \begin{align*}
        k^\prime g n(x_2^\prime)a(t_2^\prime) = n_2^\prime a_2^\prime u_2^\prime ,
    \end{align*}
    then $v_2 := n_2^\prime l$ is the vertical geodesic perpendicular to $k^\prime g\BH^2$  at $k^\prime g.(x_2^\prime,e^{t_2^\prime })$.
    It remains to show $v_1 = v_2$.
    We define $u = \Phi_{n(x_1^\prime)a(t_1^\prime)}:K_0\to K_0$ and then $u_1^\prime = u(k^\prime)$.
    Lemma \ref{spliting A} gives
    \begin{align*}
        - A\left( kn(x_1^\prime)a(t_1^\prime) \right) + A\left( kgn(x_2^\prime)a(t_2^\prime) \right) = A\left( u(k)a(-t_1^\prime)n(-x_1^\prime)gn(x_2^\prime)a(t_2^\prime)\right).
    \end{align*}
    By Lemma \ref{change of variable}, $u(\cdot)$ is a diffeomorphism, so $k^\prime$ is a critical point exactly when $u_1^\prime$ is a critical point of the function $A\left( ua(-t_1^\prime)n(-x_1^\prime)gn(x_2^\prime)a(t_2^\prime)\right)$ in the variable $u$. This holds if and only if for any $X\in\fk$
    \begin{align*}
        \frac{\partial}{\partial t}A\left( \exp(tX)u^\prime a(-t_1^\prime)n(-x_1^\prime)gn(x_2^\prime)a(t_2^\prime)\right)|_{t=0} = 0.
    \end{align*}
    By Proposition \ref{critical for k}, this is equivalent to 
    \begin{align}\label{eq 5.5}
        u_1^\prime a(-t_1^\prime)n(-x_1^\prime)gn(x_2^\prime)a(t_2^\prime)\in AK_0; 
    \end{align}
    that is the point 
    \begin{align*}
        (u_1^\prime a(-t_1^\prime)n(-x_1^\prime)g).(x_2^\prime,e^{t_2^\prime}) = \left( u_1^\prime a(-t_1^\prime)n(-x_1^\prime)gn(x_2^\prime)a(t_2^\prime)\right). (0,1)
    \end{align*}
    lies on the vertical geodesic $l$. Since
    \begin{align*}
        u_1^\prime a(-t_1^\prime)n(-x_1^\prime) = a_1^{\prime-1}n_1^{\prime-1} k^\prime,
    \end{align*}
    $u_1^\prime$ being a critical point is equivalent to that
    \begin{align*}
        (a_1^{\prime-1}n_1^{\prime-1} k^\prime g).(x_2^\prime,e^{t_2^\prime})\in l,
    \end{align*}
    which is equivalent to that
    \begin{align*}
        k^\prime g.(x_2^\prime,e^{t_2^\prime}) \in n_1^\prime l =v_1.
    \end{align*}
    Hence, $k^\prime g.(x_2^\prime,e^{t_2^\prime})$ lies on both $v_1$ and $v_2$. We conclude $v_1 = v_2$ because they are both vertical.

     Conversely, if there is a vertical geodesic $v$ perpendicular to $k^\prime\BH^2$ at $k^\prime.(x_1^\prime,e^{t_1^\prime})$  and to $k^\prime g\BH^2$ at $k^\prime g.(x^\prime_2,e^{t^\prime_2})$, then we may conclude that
     \begin{align*}
         \frac{\partial\phi}{\partial x_i}(x_1^\prime,t_1^\prime,x_2^\prime,t_2^\prime,k^\prime) = \frac{\partial\phi}{\partial t_i}(x_1^\prime,t_1^\prime,x_2^\prime,t_2^\prime,k^\prime) = 0\quad\text{ for }i=1,2,
     \end{align*}
    by Proposition \ref{crtical for x}. We have already shown that points $k^\prime .(x_1^\prime,e^{t_1^\prime})$ and $k^\prime g.(x_2^\prime,e^{t_2^\prime})$ lie on the same vertical geodesic if and only if $u_1^\prime $ is critical. Hence, we conclude that $k^\prime $ is also critical.
\end{proof}

\begin{corollary}\label{corollary of critical point description}
Suppose $g\in G_0\backslash H^\prime$. Then
\begin{enumerate}
    \item The set of critical points of $\phi$ is nonempty if and only if the distance between $\BH^2$ and $ g\BH^2 $ is nonzero. 
    In this case, $(x_1^\prime,t_1^\prime,x_2^\prime,t_2^\prime,k^\prime)$ is a critical point exactly when the geodesic segment $v_0$ joining the pair $ (x_1^\prime,e^{t_1^\prime})$ and $g.(x_2^\prime,e^{t_2^\prime})$ realizes the distance between $\BH^2$ and $ g\BH^2 $, and $k^\prime v_0$ is vertical. 
    \item Moreover, if $(x_1^\prime,t_1^\prime,x_2^\prime,t_2^\prime,k^\prime)$ is a critical point of $\phi$, then all critical points of $\phi$ are of the form
    \begin{align*}
        \left\{(x_1^\prime,t_1^\prime,x_2^\prime,t_2^\prime,k)\in \BR^4\times K_0 \,|\, k\in \operatorname{U}(1)k^\prime \cup \operatorname{U}(1) \Phi_{n(x_1^\prime)a(t_1^\prime)}^{-1}(w_0 u^\prime) \right\}
    \end{align*}
    which form two pairs of $\operatorname{U}(1)$-orbits. Here $w_0 = \begin{pmatrix}
            &1\\-1&
        \end{pmatrix}$ is the Weyl element, and $u^\prime = \Phi_{n(x_1^\prime)a(t_1^\prime)}(k^\prime)$.
\end{enumerate}

\end{corollary}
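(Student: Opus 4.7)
The plan is to translate the analytic critical point condition of Proposition \ref{critical points} into a geometric statement about common perpendiculars between the totally geodesic planes $\BH^2$ and $g\BH^2$, and then use the stabilizer of a vertical geodesic in $K_0$ to enumerate the finitely many solutions up to $\operatorname{U}(1)$-symmetry.

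First, applying the isometry $k^{-1}$ to the description in Proposition \ref{critical points}, the tuple $(x_1,t_1,x_2,t_2,k)$ is a critical point if and only if the geodesic $v$ joining $(x_1,e^{t_1})\in\BH^2$ and $g(x_2,e^{t_2})\in g\BH^2$ is perpendicular to $\BH^2$ at the first point and to $g\BH^2$ at the second, and $kv$ is vertical. In particular $v$ must be a common perpendicular segment for the two planes. Since $g\notin H^\prime$, the plane $g\BH^2$ is distinct from $\BH^2$, and the standard trichotomy for pairs of totally geodesic planes in $\BH^3$ (intersecting, asymptotic, or ultraparallel) shows that a common perpendicular segment exists and is unique exactly in the ultraparallel case $d(\BH^2,g\BH^2)>0$. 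This gives part (1) and pins down $(x_1^\prime,t_1^\prime,x_2^\prime,t_2^\prime)$ uniquely as the coordinates of the two feet of $v_0$.

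For part (2), with the feet now fixed, I enumerate the $k\in K_0$ with $kv_0$ vertical. Following the substitution in the proof of Proposition \ref{critical points}, set $h^\prime=a(-t_1^\prime)n(-x_1^\prime)g\,n(x_2^\prime)a(t_2^\prime)$ and $u=\Phi_{n(x_1^\prime)a(t_1^\prime)}(k)$; the critical condition then reduces to $u h^\prime\in AK_0$, which is equivalent to $u h^\prime(0,0,1)\in l$ because $AK_0\cdot(0,0,1)=l$. By construction $h^\prime(0,0,1)=(0,0,e^{\pm\rho})\in l\setminus\{(0,0,1)\}$ where $\rho=d(\BH^2,g\BH^2)>0$. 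A direct calculation via (\ref{Poincare}) with $u=\begin{pmatrix}\alpha&\beta\\-\bar\beta&\bar\alpha\end{pmatrix}\in K_0$ shows that $u(0,0,e^{s_0})\in l$ with $s_0\neq 0$ forces $\alpha\beta=0$, hence $u\in\operatorname{U}(1)\cup\operatorname{U}(1)w_0$, which is precisely the setwise stabilizer of $l$ in $K_0$.

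Finally, I transfer this back to $k$ using that $\Phi_g$ is left $\operatorname{U}(1)$-equivariant: conjugation by any element of $\operatorname{U}(1)$ preserves both $N$ and $A$, so $\Phi_g(uk)=u\Phi_g(k)$ for $u\in\operatorname{U}(1)$. Combined with the fact that $w_0 u^\prime w_0^{-1}=(u^\prime)^{-1}\in\operatorname{U}(1)$, this yields
$\Phi_{n(x_1^\prime)a(t_1^\prime)}^{-1}\!\big(\operatorname{U}(1)\big)=\operatorname{U}(1)k^\prime$ and $\Phi_{n(x_1^\prime)a(t_1^\prime)}^{-1}\!\big(\operatorname{U}(1)w_0\big)=\operatorname{U}(1)\,\Phi_{n(x_1^\prime)a(t_1^\prime)}^{-1}(w_0 u^\prime)$, giving the two $\operatorname{U}(1)$-orbits in the statement. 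I expect the main technical hurdle to be the clean identification of the stabilizer of $l$ in $K_0$ and tracking the $\operatorname{U}(1)$-equivariance through $\Phi_g$; both ultimately reduce to short explicit $2\times 2$ matrix computations.
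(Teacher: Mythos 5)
Your part (1) is fine and is essentially the paper's argument: Proposition \ref{critical points} plus the elementary hyperbolic geometry of two ultraparallel planes and their unique common perpendicular. The problem is in part (2), at the step ``by construction $h^\prime(0,0,1)=(0,0,e^{\pm\rho})\in l$.'' This is false. Writing $h^\prime=a(-t_1^\prime)n(-x_1^\prime)g\,n(x_2^\prime)a(t_2^\prime)$, the point $h^\prime\cdot(0,1)$ is the image of the second foot $g(x_2^\prime,e^{t_2^\prime})$ under the real matrix $a(-t_1^\prime)n(-x_1^\prime)$, which preserves $\BH^2$ and sends the first foot to $(0,1)$; hence $h^\prime\cdot(0,1)$ lies on the geodesic through $(0,1)$ \emph{perpendicular to} $\BH^2$ (the unit semicircle in the plane $\{\operatorname{Re}z=0\}$, by (\ref{Poincare})), not on the vertical geodesic $l\subset\BH^2$. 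One only lands on $l$ after applying the critical rotation $u_1^\prime$: this is precisely the content of (\ref{definition of h}), $u_1^\prime h^\prime\in a(h)K_0$. Because of this, your enumeration outputs $u\in\operatorname{U}(1)\cup\operatorname{U}(1)w_0$, i.e.\ the stabilizer of $l$ in $K_0$, which cannot be the critical set: a critical $u=\Phi_{n(x_1^\prime)a(t_1^\prime)}(k)$ must satisfy $\Psi(u)=\Theta(u)=0$ by Proposition \ref{crtical for x}, whereas $\Theta\equiv 1$ on $\operatorname{U}(1)$. The same error propagates into your closing claim ``$w_0u^\prime w_0^{-1}=(u^\prime)^{-1}\in\operatorname{U}(1)$'': for the actual critical $u^\prime$, which by Proposition \ref{crtical for x}(c) has $|\alpha|=|\beta|=1/\sqrt{2}$, this is false, and it is only plausible under your incorrect conclusion that $u^\prime\in\operatorname{U}(1)\cup\operatorname{U}(1)w_0$.

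The repair is short, and there are two ways to do it. The paper's route avoids the computation entirely: once part (1) forces $(x_1,t_1,x_2,t_2)=(x_1^\prime,t_1^\prime,x_2^\prime,t_2^\prime)$, both $\Phi_{n(x_1^\prime)a(t_1^\prime)}(k)$ and $u^\prime$ satisfy the conditions of Proposition \ref{crtical for x}, whose solution set (item (c)) is exactly the union of the two cosets $\operatorname{U}(1)u^\prime\cup\operatorname{U}(1)w_0u^\prime$; pulling back by $\Phi_{n(x_1^\prime)a(t_1^\prime)}^{-1}$ and using the left $\operatorname{U}(1)$-equivariance you correctly established gives the statement. Alternatively, your computation can be salvaged: use (\ref{definition of h}) to write $h^\prime\in u^{\prime-1}a(h)K_0$ with $h\neq 0$, so the condition $uh^\prime\in AK_0$ becomes $uu^{\prime-1}(0,e^{h})\in l$, and your ``$\alpha\beta=0$'' argument, applied to $uu^{\prime-1}$ rather than to $u$, yields $u\in\operatorname{U}(1)u^\prime\cup\operatorname{U}(1)w_0u^\prime$ as required. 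Note also that your reduction of full criticality to the single condition $uh^\prime\in AK_0$ silently uses that, with the feet fixed and the two image points distinct, the vertical geodesic through them must be $k v_0$, so the perpendicularity conditions in the $(x_i,t_i)$ directions are automatic; this is true but worth a sentence.
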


Recall that the group $\mathrm{U}(1)$ is defined by \eqref{defn of U(1)}.

\begin{proof}

    By Proposition \ref{critical points}, if $(x_1^\prime,t_1^\prime,x_2^\prime,t_2^\prime,k^\prime)$ is a critical point of $\phi$, then there exists a vertical geodesic $v$ perpendicular to $k^\prime\BH^2$ at $k^\prime.(x_1^\prime ,e^{t_1^\prime})$ and to $k^\prime g\BH^2$ at $k^\prime g.(x_2^\prime,e^{t_2^\prime})$. Hence, the geodesic $k^{\prime -1}v$ is perpendicular to $\BH^2$ at $(x_1^\prime ,e^{t_1^\prime})$ and to $g\BH^2$ at $g.(x_2^\prime ,e^{t_2^\prime})$. Since $g\notin H^\prime$, the distance between $\BH^2$ and $g\BH^2$ must be positive and is realized by the segment of $k^{\prime -1}v$ joining $(x_1^\prime ,e^{t_1^\prime})$ and $g.(x_2^\prime ,e^{t_2^\prime})$.
    Conversely, we assume that the distance between $\BH^2$ and $ g\BH^2 $ is positive and the geodesic segment $v_0$ joining $ (x_1^\prime,e^{t_1^\prime})$ and $g.(x_2^\prime,e^{t_2^\prime})$ realizes the distance. If $k^\prime v_0$ is vertical, then the vertical geodesic $k^\prime v_0$ is perpendicular to $k^\prime\BH^2$ and $ k^\prime g\BH^2 $ at $ k^\prime.(x_1^\prime,e^{t_1^\prime})$ and $k^\prime g.(x_2^\prime,e^{t_2^\prime})$. We conclude that $(x_1^\prime,t_1^\prime,x_2^\prime,t_2^\prime,k^\prime)$ is a critical point of $\phi$ by Proposition \ref{critical points}, which completes the proof for part (a). 

    Let us prove the part (b).  Suppose $(x_1^\prime,t_1^\prime,x_2^\prime,t_2^\prime,k^\prime)$ is a given critical point of $\phi$, and $(x_1,t_1,x_2,t_2,k)$ is another one. 
    From part (a), we know that $(x_1,t_1,x_2,t_2) = (x_1^\prime,t_1^\prime,x_2^\prime,t_2^\prime)$ and $k^\prime v_0,kv_0$ are vertical. Hence, the vertical geodesic $l$ is perpendicular to both $ \Phi_{n(x_1^\prime)a(t_1^\prime)}(k^\prime) \BH^2$ and $ \Phi_{n(x_1^\prime)a(t_1^\prime)}(k) \BH^2$. From the equivalence of (c) and (d) in Proposition \ref{crtical for x}, we have
    \begin{align}\label{critical set for k using orbit U(1)}
        \Phi_{n(x_1^\prime)a(t_1^\prime)}(k) \in \operatorname{U}(1)u^\prime \cup \operatorname{U}(1) (w_0 u^\prime).
    \end{align}
    We can conclude that
    \begin{align*}
        k\in \operatorname{U}(1)k^\prime \cup \operatorname{U}(1) \Phi_{n(x_1^\prime)a(t_1^\prime)}^{-1}(w_0 u^\prime) ,
    \end{align*}
    by acting $\Phi_{n(x_1^\prime)a(t_1^\prime)}^{-1}$ on both sides of (\ref{critical set for k using orbit U(1)}), and applying the property that the diffeomorphism $\Phi_{n(x_1^\prime)a(t_1^\prime)}$ preserve the subgroup $\operatorname{U}(1)$.
\end{proof}

\subsection{The Hessians}\label{subsection hessians}

In this subsection, we assume $g\in G_0\backslash H^\prime$ satisfying that the distance between $\BH^2$ and $g\BH^2$ is positive.
Therefore, Corollary \ref{corollary of critical point description} implies that the set of critical points is of dimension one and $\operatorname{U}(1)$-invariant.
Since the set of critical points is of positive dimension, we shall care about the Hessian in the directions transversal to the critical set. Later, we will make this more precise using the local coordinates given by the exponential map to $K_0$.

Let $(x_1^\prime,t_1^\prime,x_2^\prime,t_2^\prime,k^\prime)\in\BR^4\times K_0$ be a fixed critical point of $\phi$, and define maps $u_1 = \Phi_{n(x_1^\prime)a(t_1^\prime)}$ and $u_2=\Phi_{gn(x_2^\prime)a(t_2^\prime)}$ from $K_0$ to $K_0$, that is
\begin{align}\label{u1 u2}
    kn(x_1^\prime)a(t_1^\prime) \in NA u_1(k),\quad\text{ and } \quad kgn(x_2^\prime)a(t_2^\prime) \in NA u_2(k).
\end{align}
We let $u_1^\prime=u_1(k^\prime)$ and $u_2^\prime = u_2(k^\prime)$. Lemma \ref{crtical for x} $(b)$ and $(c)$ implies that $\Psi(u_1^\prime)= \Psi(u_2^\prime)=0$, $\Theta(u_1^\prime) = \Theta(u_2^\prime)=0$, and they must be of the form
\begin{align}\label{definition of ujprime}
    u_j^\prime = \begin{pmatrix}e^{i\theta_j}&0\\0&e^{-i\theta_j}\end{pmatrix}\begin{pmatrix}1/\sqrt{2}& \epsilon_j i/\sqrt{2}\\ \epsilon_ji/\sqrt{2}&1/\sqrt{2}\end{pmatrix},
\end{align}
where $j=1,2$, $\theta_j\in\BR/2\pi\BZ$ and $\epsilon_j\in\{\pm 1\}$.
From \eqref{eq 5.5} in the proof of Proposition \ref{critical points}, we have seen that $$u_1^\prime a(-t_1^\prime)n(-x_1^\prime)gn(x_2^\prime)a(t_2^\prime)\in AK_0.$$
We let $h\in\BR$ be the number that
\begin{align}\label{definition of h}
    u_1^\prime a(-t_1^\prime)n(-x_1^\prime)gn(x_2^\prime)a(t_2^\prime)\in a(h)K_0.
\end{align}
It may be seen that $h$ is the signed distance from $k^\prime n(x_1^\prime)a(t_1^\prime)$ to $k^\prime gn(x_2^\prime)a(t_2^\prime)$ along the geodesic $v$.

To bound the oscillatory integral
\begin{align*}
    J(s,g) = \int_{\BR^4}\int_{K_0} b(x_1,t_1,x_2,t_2,k)\exp(is\phi(x_1,t_1,x_2,t_2,k,g))  dk   dx_1 dt_1 dx_2 dt_2,
\end{align*}
it suffices to bound the integral localized near the critical points. We define
\begin{align}\label{J tilde}
    \begin{split}
        &J(s,g;x_1^\prime,t_1^\prime,x_2^\prime,t_2^\prime,k^\prime)\\=& \int_{\BR^7} B(x_1,t_1,x_2,t_2,s_1,s_2,s_3)\exp(is\tilde{\phi}(x_1,t_1,x_2,t_2,s_1,s_2,s_3))  ds_1 ds_2 ds_3   dx_1 dt_1 dx_2 dt_2,
    \end{split}
\end{align}
where
\begin{align}\label{phi tilde}
\begin{split}
     &\tilde{\phi}(x_1,t_1,x_2,t_2,s_1,s_2,s_3)\\
    =&- A\left( u_1^{-1}(\exp(s_1X_1+s_2X_2+s_3X_3) u_1^\prime)n(x_1)a(t_1) \right)\\
    &+ A\left( u_1^{-1}(\exp(s_1X_1+s_2X_2+s_3X_3) u_1^\prime)gn(x_2)a(t_2) \right)
\end{split}
\end{align}
has a critical point at  $(x_1^\prime,t_1^\prime,x_2^\prime,t_2^\prime,0,0,0)$ and $B(x_1,t_1,x_2,t_2,s_1,s_2,s_3)$ is a smooth function on $\BR^7$ with a compact support $\supp B =\overline{S_1} \times \overline{S_2} \subset \BR^4\times\BR^3$. Here $S_1$ and $S_2$ are both bounded open, $\overline{S_1}$ and $ \overline{S_2} $ are their closures.
By the proof of Proposition \ref{critical points} or $(c)$ in Lemma \ref{crtical for x}, we know that $(x_1^\prime,t_1^\prime,x_2^\prime,t_2^\prime,0,0,t)$ is still a critical point of $\tilde{\phi}$. We may assume that $S_1$ and $S_2$ are small enough so that they satisfy the following two properties:
\begin{itemize}
    \item (A1) $(x_1^\prime,t_1^\prime,x_2^\prime,t_2^\prime,0,0,s_3)$ with $(0,0,s_3)\in S_2$ are the only critical points of $\tilde{\phi}$ in $S_1\times S_2$.
\end{itemize}
 We can identify $S_2$ with a neighborhood of $0$ in $\fk$ by sending $(s_1,s_2,s_3)$ to $s_1 X_1 + s_2 X_2 + s_3 X_3$. Then
\begin{itemize}
    \item (A2) The exponential map restricted to $S_2$ is a diffeomorphism onto its image in $K_0$.
\end{itemize}

We can decompose $J(s,g)$ into a finite sum of $ J(s,g;x_1^\prime,t_1^\prime,x_2^\prime,t_2^\prime,k^\prime)$'s by change of variables via the exponential map and the diffeomorphism $u_1$. Therefore, we only need to bound  $ J(s,g;x_1^\prime,t_1^\prime,x_2^\prime,t_2^\prime,k^\prime)$. The goal of this section is to compute the Hessian of $\tilde{\phi}$ at the critical point $(x_1^\prime,t_1^\prime,x_2^\prime,t_2^\prime,0,0,0)$.

When we calculate $\partial^2 \tilde{\phi}/\partial s_i\partial x_2$ and $\partial^2 \tilde{\phi}/\partial s_i\partial t_2$, the local coordinate for the $k$-variable is written under $u_1$, but it is easier to do so under $u_2$ when differentiating $x_2,t_2$, so we need to explore the relation between the maps $u_1$ and $u_2$ first.
Since $u_2\circ u_1^{-1}$ is a diffeomorphism, it is a local diffeomorphism sending $u_1^\prime$ to $u_2^\prime$. More precisely, we suppose $U_1$ is an open neighborhood of $0$ in $\fk$ so that the restriction of $\exp$ to $U_1$ is a diffeomorphism from $U_1$ to its image in $K_0$. Let $V_1 =\exp(U_1) u_1^\prime$ and $V_2 = u_2\circ u_1^{-1}(V_1)$. By assuming that $U_1$ is small enough, we may assume that $V_2$ is also diffeomorphic to some open neighborhood $U_2$ of $0$ in $\fk$ under $\exp(\cdot)u_2^\prime$. We define a map $\alpha_{12}:U_1 \to U_2$ by lifting the diffeomorphism $u_2\circ u_1^{-1}:V_1\to V_2$, that is, $\alpha_{12}$ fits into the commutative diagram
\begin{align}\label{commutative diagram}
\begin{CD}
U_1 @>\alpha_{12}>> U_2\\
@V\exp(\cdot)u_1^\prime VV @VV\exp(\cdot)u_2^\prime V\\
V_1 @>u_2\circ u_1^{-1}>> V_2.
\end{CD}
\end{align}
We identify $U_1$ and $U_2$ as open subsets of $\BR^3$ by fixing the basis $X_1,X_2,X_3\in\fk$, and will still use $s_1,s_2,s_3$ as coordinates of $U_1$ and will use $r_1,r_2,r_3$ as coordinates of $U_2$. More precisely, $(s_1,s_2,s_3)\in U_1$ corresponds to the vector $s_1X_1 + s_2 X_2 + s_3 X_3 \in \fk$, and $(r_1,r_2,r_3)\in U_1$ corresponds to the vector $r_1X_1 + r_2 X_2 + r_3 X_3 \in \fk$. So to allow the above local diffeomorphism $\alpha_{12}$, we may put the last assumption on $S_1\times S_2$:
\begin{itemize}
    \item (A3) $S_2\subset U_1$.
\end{itemize}

\begin{lemma}\label{Iwasawa k}
    For $g= \begin{pmatrix}a &b \\c &d \end{pmatrix}\in G_0$, its Iwasawa projection to $K_0$ is given by the formula
    \begin{align*}
        \kappa(g) = \frac{1}{\sqrt{|c|^2+|d|^2}}\begin{pmatrix}{\bar{d}} &-{\bar{c}} \\{c}&{d} \end{pmatrix},
    \end{align*}
    that is, $g\in NA\kappa(g)$.
\end{lemma}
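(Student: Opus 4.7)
The plan is to verify the formula by direct computation, checking that $g\kappa(g)^{-1} \in NA$, where $\kappa(g)$ is the proposed element of $K_0 = \mathrm{SU}(2)$.

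First, I would observe that the proposed $\kappa(g)$ does lie in $K_0$: writing
$\alpha = \bar{d}/\sqrt{|c|^2+|d|^2}$ and $\beta = -\bar{c}/\sqrt{|c|^2+|d|^2}$,
the matrix has the form $\begin{pmatrix}\alpha & \beta \\ -\bar{\beta} & \bar{\alpha}\end{pmatrix}$ with $|\alpha|^2 + |\beta|^2 = 1$, so it sits in $\mathrm{SU}(2)$. Hence its inverse is
\begin{align*}
\kappa(g)^{-1} = \frac{1}{\sqrt{|c|^2+|d|^2}}\begin{pmatrix} d & \bar{c} \\ -c & \bar{d}\end{pmatrix}.
\end{align*}

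Next I would compute the product
\begin{align*}
g\,\kappa(g)^{-1} = \frac{1}{\sqrt{|c|^2+|d|^2}}\begin{pmatrix} a & b \\ c & d\end{pmatrix}\begin{pmatrix} d & \bar{c} \\ -c & \bar{d}\end{pmatrix} = \frac{1}{\sqrt{|c|^2+|d|^2}}\begin{pmatrix} ad-bc & a\bar{c}+b\bar{d} \\ 0 & |c|^2+|d|^2\end{pmatrix},
\end{align*}
where the $(2,1)$-entry $cd-dc = 0$ is immediate and the $(1,1)$-entry uses $\det g = ad-bc = 1$. Thus $g\kappa(g)^{-1}$ is upper triangular with positive real diagonal entries $1/\sqrt{|c|^2+|d|^2}$ and $\sqrt{|c|^2+|d|^2}$, so it lies in $NA$. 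Indeed it equals $n(z)a(t)$ with $t = -\log(|c|^2+|d|^2)$ and $z = (a\bar{c}+b\bar{d})/(|c|^2+|d|^2)$, which confirms $g \in NA\kappa(g)$.

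This is the entire content of the lemma: once one knows $\kappa(g) \in K_0$ (trivial) and that $g\kappa(g)^{-1}$ has vanishing $(2,1)$-entry (which is precisely where $\det g = 1$ gets used), there is essentially no obstacle. The only point requiring mild care is matching conventions: the parametrization $a(t) = \exp(tH)$ with $H = \mathrm{diag}(1/2,-1/2)$ from the preceding subsection means the diagonal entries above do correspond to an element of $A$, so no sign or factor of $2$ is lost.
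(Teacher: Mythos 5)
Your proof is correct, and every step checks out: the proposed $\kappa(g)$ is indeed in $\SU(2)$, its inverse is as you wrote, and $g\kappa(g)^{-1}$ comes out upper triangular with diagonal $\bigl(1/\sqrt{|c|^2+|d|^2},\,\sqrt{|c|^2+|d|^2}\bigr)$, hence lies in $NA$ with $t=-\log(|c|^2+|d|^2)$; your remark about matching the convention $a(t)=\exp(tH)$ is also the right thing to check. The paper disposes of the lemma in one line, saying it ``directly follows from computing the action of $g$ on $\BH^3$,'' i.e.\ the intended route is geometric: since $K_0$ is the stabilizer of the base point $(0,1)$, one reads off the $N$- and $A$-components from where $g$ sends $(0,1)$ via the explicit formula (\ref{Poincare}) (note the height $1/(|c|^2+|d|^2)$ there matches your $e^t$), and then recovers $\kappa(g)=a^{-1}n^{-1}g$. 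Your purely algebraic verification that $g\kappa(g)^{-1}\in NA$ is an equally valid and fully self-contained alternative; it avoids invoking the action on $\BH^3$ altogether, at the cost of not explaining where the formula comes from, while the paper's route makes the formula natural but leaves the matrix bookkeeping to the reader. Either way the content is the same elementary computation, and there is no gap in what you wrote.
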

\begin{proof}
    Directly follows from computing the action of $g$ on $\BH^3$.
\end{proof}

\begin{proposition}\label{jacobian u1 u2}
    For $X\in U_1$, we have
    \begin{align}\label{exp of alpha12}
        \exp(\alpha_{12}(X)) = \kappa (\exp(X)a(h)),
    \end{align}
    where $h$ is as in (\ref{definition of h}), and
    the Jacobian of $\alpha_{12}$ at $0$ is
    \begin{align*}
        \bJ\alpha_{12}(0,0,0) =\frac{\partial(r_1,r_2,r_3)}{\partial(s_1,s_2,s_3)}(0,0,0) = \begin{pmatrix}e^h& & \\&e^h&\\&&1 \end{pmatrix}.
    \end{align*}
\end{proposition}

\begin{proof}
    We recall that from (\ref{definition of h}) we have
    \begin{align*}
        u_1^\prime a(-t_1^\prime)n(-x_1^\prime)gn(x_2^\prime)a(t_2^\prime)= a(h)k_0
    \end{align*}
    for some $k_0\in K$, and so
    \begin{align*}
        kgn(x_2^\prime)a(t_2^\prime)=kn(x_1^\prime)a(t_1^\prime)u_1^{\prime-1}a(h)k_0,
    \end{align*}
    for all $k\in K$. Substituting both parts of (\ref{u1 u2}) into this gives
    \begin{align*}
        NAu_1(k)u_1^{\prime-1}a(h)k_0&= NAu_2(k) ,
    \end{align*}
    so that
    \begin{align*}
            u_1(k)u_1^{\prime-1}a(h)&\in NAu_2(k)k_0^{-1}.
        \end{align*}
        By setting $k=k^\prime$ we see that $k_0 = u_2(k^\prime)=u_2^\prime$, so
        \begin{align}\label{610}
            u_1(k)u_1^{\prime-1}a(h)&\in NAu_2(k)u_2^{\prime-1}.
        \end{align}
        Then, for $X\in U_1$, if we let $k = u_1^{-1}\left(\exp(X) u_1^\prime  \right) = u_2^{-1}(\exp(\alpha_{12}(X))u_2^\prime) $, we have
        \begin{align*}
            \exp(X)a(h) \in NA\exp(\alpha_{12}(X)),
        \end{align*}
        so that
        \begin{align*}
            \exp(\alpha_{12}(X)) = \kappa\left(\exp(X)a(h) \right),
        \end{align*}
        which proves (\ref{exp of alpha12}).
        
        By applying Lemma \ref{Iwasawa k} and (\ref{exp of alpha12}), we have, for a real number $t$ near $0$,
        \begin{align*}
            \exp(\alpha_{12}(tX_1)) &=\kappa\left( \exp(tX_1)a(h) \right)\\
            &=\kappa\left( \begin{pmatrix}e^{h/2}\cos t &ie^{-h/2}\sin t\\ie^{h/2}\sin t&e^{-h/2}\cos t\end{pmatrix} \right)\\
            &=\frac{1}{\sqrt{e^h\sin^2t+e^{-h}\cos^2t}}\begin{pmatrix}e^{-h/2}\cos t &ie^{h/2}\sin t\\ie^{h/2}\sin t&e^{-h/2}\cos t\end{pmatrix},
        \end{align*}
        so this gives
        \begin{align*}
            &\frac{\partial}{\partial t}\exp(\alpha_{12}(tX_1)) |_{t=0}\\
            =&\left.\frac{\partial}{\partial t}\left( \frac{1}{\sqrt{e^h\sin^2t+e^{-h}\cos^2t}}\begin{pmatrix}e^{-h/2}\cos t &ie^{h/2}\sin t\\ie^{h/2}\sin t&e^{-h/2}\cos t\end{pmatrix} \right)\right|_{t=0}\\
            =& e^h \begin{pmatrix}0&i\\i&0\end{pmatrix} \\
            =& e^h X_1.
        \end{align*}
        Since
        \begin{align*}
            \frac{\partial}{\partial t}\exp(\alpha_{12}(tX_1)) |_{t=0}=\exp(\alpha_{12}(0))  \frac{\partial\alpha_{12}}{\partial s_1}(0,0,0)=\frac{\partial\alpha_{12}}{\partial s_1}(0,0,0),
        \end{align*}
        we have
        \begin{align*}
            \frac{\partial\alpha_{12}}{\partial s_1}(0,0,0) = e^h X_1.
        \end{align*}
        Similarly, we have
        \begin{align*}
            \exp(\alpha_{12}(tX_2)) &=\frac{1}{\sqrt{e^h\sin^2t+e^{-h}\cos^2t}}\begin{pmatrix}e^{-h/2}\cos t &-e^{h/2}\sin t\\e^{h/2}\sin t&e^{-h/2}\cos t\end{pmatrix},
        \end{align*}
        and
        \begin{align*}
            \exp(\alpha_{12}(tX_3)) &=\begin{pmatrix}e^{it} &0\\0&e^{-it}\end{pmatrix}.
        \end{align*}
        We conclude that
        \begin{align*}
            \frac{\partial\alpha_{12}}{\partial s_2}(0,0,0)= e^h X_2,\quad\text{ and }\quad\frac{\partial\alpha_{12}}{\partial s_3}(0,0,0)=  X_3.
        \end{align*}
    \end{proof}
    
    \begin{lemma}\label{gradient of psi theta}
        Given $k=\begin{pmatrix}\alpha&{\beta}\\-\bar{\beta}&\bar{\alpha}\end{pmatrix}\in K_0$, the gradients of $\Psi$ and $\Theta$ at $k$ are
        \begin{align*}
            &\nabla_{K_0}\Psi(k):=\left. \left(   \frac{\partial}{\partial t}\Psi(\exp(tX_1)k) ,\frac{\partial}{\partial t}\Psi(\exp(tX_2)k) ,\frac{\partial}{\partial t}\Psi(\exp(tX_3)k)    \right)\right|_{t=0}\\
            =&\left(-i(\alpha^2-\bar{\alpha}^2-\beta^2+\bar{\beta}^2), -(\alpha^2+\bar{\alpha}^2-\beta^2-\bar{\beta}^2),0 \right),
        \end{align*}
        and
        \begin{align*}
            \nabla_{K_0}\Theta(k)=\left( 2i(\alpha\beta-\bar{\alpha}\bar{\beta}), 2(\alpha\beta+\bar{\alpha}\bar{\beta}), 0\right).
        \end{align*}
        In particular, if $k=  \begin{pmatrix}e^{i\theta}&0\\0&e^{-i\theta}\end{pmatrix}\begin{pmatrix}1/\sqrt{2}& \epsilon i/\sqrt{2}\\ \epsilon i/\sqrt{2}&1/\sqrt{2}\end{pmatrix}$ with $\theta\in\BR/2\pi\BZ$ and $\epsilon\in\{\pm1\}$, then
        \begin{align*}
            &\nabla\Psi(k)
            =\left(2\sin(2\theta), -2\cos(2\theta),0 \right),
        \end{align*}
        and
        \begin{align*}
            \nabla\Theta(k)=\left( -2\epsilon \cos(2\theta), -2\epsilon \sin(2\theta), 0\right).
        \end{align*}
    \end{lemma}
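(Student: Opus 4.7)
The plan is a direct computation: left multiplication by $\exp(tX_i)$ is a linear operation on the matrix entries, so I can write down $\alpha(t)$ and $\beta(t)$ explicitly, differentiate $\Psi$ and $\Theta$ via the chain rule, and then specialize.

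First, for each $i \in \{1,2,3\}$, I expand $\exp(tX_i) = I + tX_i + O(t^2)$ and multiply to obtain the top row of $\exp(tX_i)k$. Writing $k = \begin{pmatrix}\alpha & \beta \\ -\bar\beta & \bar\alpha\end{pmatrix}$ and letting $\alpha(t), \beta(t)$ denote the new $(1,1)$ and $(1,2)$ entries, I read off
$(\alpha'(0), \beta'(0)) = (-i\bar\beta, i\bar\alpha)$ for $i=1$, $(\bar\beta, -\bar\alpha)$ for $i=2$, and $(i\alpha, i\beta)$ for $i=3$. (For $i=3$ one can alternatively observe that left multiplication by $\exp(tX_3)$ multiplies the top row by $e^{it}$, which immediately makes both $\Psi$ and $\Theta$ $t$-invariant, yielding the third coordinate $0$ in both gradients.)

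Next, I apply the product rule to $\Psi(k) = \alpha\bar\beta + \bar\alpha\beta$ and $\Theta(k) = \alpha\bar\alpha - \beta\bar\beta$, giving
\begin{align*}
\tfrac{d}{dt}\Psi\bigr|_{t=0} &= \alpha'(0)\bar\beta + \alpha\,\overline{\beta'(0)} + \overline{\alpha'(0)}\,\beta + \bar\alpha\,\beta'(0),\\
\tfrac{d}{dt}\Theta\bigr|_{t=0} &= \alpha'(0)\bar\alpha + \alpha\,\overline{\alpha'(0)} - \beta'(0)\bar\beta - \beta\,\overline{\beta'(0)}.
\end{align*}
Substituting the three pairs $(\alpha'(0), \beta'(0))$ above and collecting terms yields the two formulas for $\nabla_{K_0}\Psi(k)$ and $\nabla_{K_0}\Theta(k)$ stated in the lemma.

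Finally, for the distinguished $k$ I read off $\alpha = e^{i\theta}/\sqrt{2}$ and $\beta = \epsilon i\, e^{i\theta}/\sqrt{2}$ (using $\epsilon^2 = 1$), which gives $\alpha^2 = e^{2i\theta}/2$, $\beta^2 = -e^{2i\theta}/2$, and $\alpha\beta = \epsilon i\, e^{2i\theta}/2$, with analogous conjugates. Plugging these into the general formulas and simplifying via $e^{2i\theta} \pm e^{-2i\theta} = 2\cos(2\theta)$ or $2i\sin(2\theta)$ produces the claimed expressions in $\theta$ and $\epsilon$. There is no conceptual obstacle — the entire lemma is a bookkeeping exercise in matrix multiplication and Euler's formula.
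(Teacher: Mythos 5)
Your proposal is correct and is essentially the paper's argument: the paper computes the entries of $\exp(tX_i)k$ exactly (with $\cos t$, $\sin t$) and differentiates $\Psi,\Theta$ at $t=0$, while you differentiate directly via the first-order expansion $\exp(tX_i)=I+tX_i+O(t^2)$ and the product rule, which is the same computation in slightly streamlined form. Your specialization at the distinguished $k$ also checks out, so nothing is missing.
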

    \begin{proof}
        Since
        \begin{align*}
            \exp(tX_1)k =  \begin{pmatrix}\alpha \cos t-i\bar{\beta}\sin t& {\beta}\cos t +i\bar{\alpha}\sin t \\  -\bar{\beta}\cos t +i\alpha\sin t& \bar{\alpha}\cos t+i\beta\sin t\end{pmatrix},
        \end{align*}
        we have
        \begin{align*}
            \Psi(\exp(tX_1)k) &= (\alpha \cos t-i\bar{\beta}\sin t)(\bar{\beta}\cos t -i\alpha\sin t)+(\bar{\alpha}\cos t+i\beta\sin t)({\beta}\cos t +i\bar{\alpha}\sin t) \\
            &=(\alpha\bar{\beta}+\bar{\alpha}\beta)(\cos^2t-\sin^2 t)-i(\alpha^2-\bar{\alpha}^2-\beta^2+\bar{\beta}^2)\sin t \cos t,
        \end{align*}
        and
        \begin{align*}
            \Theta(\exp(tX_1)k) &= |\alpha \cos t-i\bar{\beta}\sin t|^2-|{\beta}\cos t +i\bar{\alpha}\sin t|^2\\
            &=(|\alpha|^2-|\beta|^2)(\cos^2 t-\sin^2 t)+2i(\alpha\beta-\bar{\alpha}\bar{\beta})\sin t\cos t.
        \end{align*}
        Therefore, $\frac{\partial}{\partial t}\Psi(\exp(tX_1)k)|_{t=0} = -i(\alpha^2-\bar{\alpha}^2-\beta^2+\bar{\beta}^2)$ and $\frac{\partial}{\partial t}\Theta(\exp(tX_1)k)|_{t=0} = 2i(\alpha\beta-\bar{\alpha}\bar{\beta})$.

        The calculations for $X_2$ and $X_3$ are similar.
    \end{proof}

    \begin{proposition}\label{Hessian}
        The Hessian of $\tilde{\phi}(x_1,t_1,x_2,t_2,s_1,s_2,s_3)$ at the critical point $(x_1^\prime,t_1^\prime,x_2^\prime,t_2^\prime,0,0,0)$ is
        \begin{align*}
            D = \begin{pmatrix}D_0&\\&0\end{pmatrix} 
        \end{align*}
        with
        $$
        D_0 = 
            \begin{pmatrix}
           e^{-2t_1^\prime}&0&0&0&-2e^{-t_1^\prime}\sin(2\theta_1)&         2e^{-t_1^\prime}\cos(2\theta_1)      \\
           
           0&1&0&0&2\epsilon_1 \cos(2\theta_1)& 2\epsilon_1 \sin(2\theta_1)\\
           
           0&0&-e^{-2t_2^\prime}&0 &2e^{h-t_2^\prime}\sin(2\theta_2)&        -2e^{h-t_2^\prime}\cos(2\theta_2)       \\ 
           
           0&0&0&-1&  -2\epsilon_2 e^h\cos(2\theta_2)& -2\epsilon_2 e^h\sin(2\theta_2)\\
           
           -2e^{-t_1^\prime}\sin(2\theta_1)&2\epsilon_1 \cos(2\theta_1)&2e^{h-t_2^\prime}\sin(2\theta_2)&-2\epsilon_2 e^h\cos(2\theta_2)& 2(1-e^{2h})&0\\
           
           2e^{-t_1^\prime}\cos(2\theta_1) &2\epsilon_1 \sin(2\theta_1)&        -2e^{h-t_2^\prime}\cos(2\theta_2)   & -2\epsilon_2 e^h\sin(2\theta_2)&0&2(1-e^{2h})
           \end{pmatrix}.
       $$
         Here $D_0$ is the Hessian transversal to $\{ (x_1^\prime,t_1^\prime,x_2^\prime,t_2^\prime,0,0,t): t \text{ varies near }0\}$.
        The determinant of $D_0$ is
        \begin{align*}
            \det(D_0) = 4e^{-2(t_1^\prime+t_2^\prime)} (1-e^{2h})^2.
        \end{align*}
        Recall that $\epsilon_1,\epsilon_2,\theta_1,\theta_2$ are given by (\ref{definition of ujprime}).
    \end{proposition}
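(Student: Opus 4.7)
The plan is to split $\tilde\phi = -A_1 + A_2$ and compute each block of the Hessian separately, exploiting that $A_j$ depends only on $(x_j, t_j, s)$. First I factor $n(x_1)a(t_1) = n(x_1')a(t_1') \cdot n((x_1-x_1')e^{-t_1'})a(t_1-t_1')$ and apply Lemma \ref{lemma A} with $g = k(s)n(x_1')a(t_1')$, whose Iwasawa $K$-part equals $\tilde u(s) := \exp(X(s))u_1'$, to produce the split
\[
A_1 = \Phi_1(s) + (t_1-t_1') - \log L_1, \quad L_1 := |\alpha_{\tilde u}|^2 + |\beta_{\tilde u}|^2\bigl((x_1-x_1')^2 e^{-2t_1'} + e^{2(t_1-t_1')}\bigr) - \Psi(\tilde u)(x_1-x_1')e^{-t_1'},
\]
with $L_1|_{x_1=x_1',\,t_1=t_1'} \equiv 1$ in $s$. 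The analogous split holds for $A_2$ using $\tilde v(s) := \exp(\alpha_{12}(X(s)))u_2'$, as derived in the proof of Proposition \ref{jacobian u1 u2}.

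Every block other than the pure $(s,s)$ one now reduces to a short computation. The pure $(x_j,t_j)$ blocks come from $\log L_j$ at $s=0$; using $\Psi(u_j')=\Theta(u_j')=0$ from (\ref{definition of ujprime}) and $|\alpha_{u_j'}|^2 = |\beta_{u_j'}|^2 = 1/2$, these yield $\diag(e^{-2t_1'},1)$ and $\diag(-e^{-2t_2'},-1)$, while the $(x_1,t_1)$-$(x_2,t_2)$ block vanishes by separation of variables. Each $(x_j,t_j)$-$s$ cross block reduces, via $L_j = 1$ at the critical point, to $-e^{-t_j'}\partial_s\Psi$ and $-\partial_s\Theta$, which by Lemma \ref{gradient of psi theta} at $u_1'$ (and, for $j=2$, the chain rule through the Jacobian $\diag(e^h,e^h,1)$ from Proposition \ref{jacobian u1 u2} together with Lemma \ref{gradient of psi theta} at $u_2'$) produce the listed entries in rows 1--4 and columns 5,6; the $s_3$ column vanishes since $(\nabla_{K_0}\Psi)_3 = (\nabla_{K_0}\Theta)_3 = 0$.

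The pure $(s,s)$ block is the main obstacle. Its $s_3$ row and column vanish because the critical set in $s$-coordinates is $\{s_1 = s_2 = 0\}$: by Corollary \ref{corollary of critical point description}(b), critical points near $k'$ form $\operatorname{U}(1) k'$, which maps under $u_1$ to $\operatorname{U}(1) u_1' = \exp(\BR X_3) u_1'$ (using that $\operatorname{U}(1)$ centralizes $A$ and normalizes $N$, so $u_1(uk') = u \cdot u_1'$). For the $(s_1, s_2)$ part I use that $A(g)$ equals the log of the $t$-coordinate of $g\cdot(0,1) \in \BH^3$ to rewrite the critical slice as
\[
\tilde\phi(x_1',t_1',x_2',t_2',s) = \log t(m(s) Q^*) - \log t(m(s) P^*),
\]
where $m(s) := k(s) k'^{-1} \in K_0$ with $m(0)=e$, and $P^* := k'\cdot(x_1', e^{t_1'})$, $Q^* := k'g\cdot(x_2', e^{t_2'})$ both lie on the vertical geodesic $l$ by Corollary \ref{corollary of critical point description}(a), at $t$-coordinates $t_P^*$ and $t_Q^* = t_P^* e^h$. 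Using $\exp(Y) = \cos r\cdot I + (\sin r/r) Y$ for $Y\in\fk$ with $r=\|Y\|$, a direct calculation shows $F_T(Y) := \log t(\exp(Y)(0,T))$ has Hessian $\diag(2(1-T^2), 2(1-T^2), 0)$ at $Y=0$. Writing $m(s) = \exp(Z(s))$ and computing the Jacobian $\partial Z/\partial s|_0 = \diag(1/t_P^*, 1/t_P^*, 1)$---by analyzing $du_1|_{k'}$ via the Iwasawa expansion of $\exp(t\xi) n_0 a(c)$ with $c=\log t_P^*$ and the adjoint identity $(\Ad(a(-c))X_j)_\fk = e^c X_j$ for $j=1,2$ (the $n_0 \neq e$ case follows by a parallel calculation using $\Ad(n_0^{-1})$)---the chain rule gives the diagonal entries $(1/t_P^*)^2 \cdot 2((t_P^*)^2 - (t_Q^*)^2) = 2(1-e^{2h})$.

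The determinant of $D_0$ is finally computed by Schur complement: with $M = \diag(e^{-2t_1'},1)$, $N = \diag(-e^{-2t_2'},-1)$, $F = 2(1-e^{2h}) I_2$, and $B, E$ the computed $(x_j,t_j)$-$s$ blocks, a direct calculation using $\sin^2+\cos^2=1$ collapses the $\theta_j, \epsilon_j$ dependence and gives $B^T M^{-1} B + E^T N^{-1} E = 4(1-e^{2h}) I_2$; hence the Schur complement $F - 4(1-e^{2h}) I_2 = -2(1-e^{2h}) I_2$ has determinant $4(1-e^{2h})^2$, yielding $\det D_0 = e^{-2(t_1'+t_2')} \cdot 4(1-e^{2h})^2$.
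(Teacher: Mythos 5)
Most of your blocks agree with the paper's computation and are correct: the $(x_j,t_j)$ blocks and the $(x_j,t_j)$--$s$ cross blocks via $\Psi,\Theta$, Lemma \ref{gradient of psi theta} and the Jacobian $\diag(e^h,e^h,1)$ of Proposition \ref{jacobian u1 u2} reproduce the stated entries, and your Schur--complement evaluation $B^TM^{-1}B+E^TN^{-1}E=4(1-e^{2h})I_2$, hence $\det D_0=4e^{-2(t_1'+t_2')}(1-e^{2h})^2$, is a correct (and welcome) justification of the determinant that the paper only asserts. The genuine gap is in your derivation of the pure $(s_1,s_2)$ entries $2(1-e^{2h})$. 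You claim that $P^*=k'\cdot(x_1',e^{t_1'})$ and $Q^*=k'g\cdot(x_2',e^{t_2'})$ lie on the vertical geodesic $l$, citing Corollary \ref{corollary of critical point description}(a); but that corollary only says $k'v_0$ is \emph{some} vertical geodesic. In general $P^*,Q^*$ sit over a base point $z_0\in\BC$ with $z_0\neq 0$ (equivalently the $N$-part $n_1'$ of $k'n(x_1')a(t_1')$ is nontrivial --- your own parenthetical about ``$n_0\neq e$'' concedes exactly this, contradicting the claim $P^*\in l$). When $z_0\neq 0$, the function you must expand is $Y\mapsto \log t(\exp(Y)(z_0,T))$, not $F_T$: it acquires a linear term and quadratic corrections depending on $z_0$, so its Hessian at $0$ is not $\diag(2(1-T^2),2(1-T^2),0)$; moreover the Jacobian $\partial Z/\partial s|_0$ is then not $\diag(1/t_P^*,1/t_P^*,1)$ but has nonzero $X_3$-components in its first two columns. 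Your final numbers do survive, but only because the $z_0$-dependent terms are independent of the height $T$ and cancel in the difference $\log t(m(s)Q^*)-\log t(m(s)P^*)$, and because the spurious Jacobian entries lie in the $X_3$-direction annihilated by the middle Hessian --- cancellations you neither state nor prove. As written, the step fails in the generic case $n_1'\neq e$.

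The repair is short, and it is exactly what the paper does: by Lemma \ref{spliting A} together with (\ref{definition of h}), the critical slice equals $A\bigl(\exp(s_1X_1+s_2X_2+s_3X_3)\,a(h)\bigr)$, whose base point $a(h)\cdot(0,1)=(0,e^h)$ really does lie on $l$; your $F_T$-expansion then applies verbatim and gives $\partial^2/\partial s_1^2=\partial^2/\partial s_2^2=2(1-e^{2h})$ with all other $(s,s)$ entries zero, with no need for $m(s)=k(s)k'^{-1}$ or the Jacobian $\partial Z/\partial s$ at all. (Alternatively, keep your decomposition but redo the expansion over the true base point $z_0$ and exhibit the two cancellations explicitly.) Your vanishing argument for the $s_3$ row and column via the critical curve $(x_1',t_1',x_2',t_2',0,0,t)$ and $u_1(uk')=uu_1'$ for $u\in\operatorname{U}(1)$ is fine, and is a slightly different but valid alternative to the paper's direct computation of those entries.
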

    
    \begin{proof}
    It is clear that $\partial^2\tilde{\phi}/\partial x_1\partial 
    x_2$, $\partial^2\tilde{\phi}/\partial x_1\partial t_2$, $\partial^2\tilde{\phi}/\partial t_1\partial x_2$  and $\partial^2\tilde{\phi}/\partial t_1\partial t_2$ are identically $0$. To calculate $\partial^2 \tilde{\phi}/\partial x_1^2$ and   $\partial^2 \tilde{\phi}/\partial x_1\partial t_1$, define the map $v_1:\BR \to K_0$ by the condition that $k^\prime n(x_1^\prime+x)a(t_1^\prime) \in NAv_1(x)$. Note that $v(0) = u_1^\prime$. Lemma \ref{lemma A} gives
        \begin{align*}
            \frac{\partial}{\partial x} \tilde{\phi}(x_1^\prime+x,t_1^\prime,x_2^\prime,t_2^\prime,0,0,0) = -e^{-t_1^\prime}\Psi(v_1(x)),
        \end{align*}
        and
        \begin{align*}
            \frac{\partial}{\partial t_1} \tilde{\phi}(x_1^\prime+x,t_1^\prime,x_2^\prime,t_2^\prime,0,0,0)=\frac{\partial}{\partial t} \tilde{\phi}(x_1^\prime+x,t_1^\prime+t,x_2^\prime,t_2^\prime,0,0,0)|_{t=0} = -\Theta(v_1(x)).
        \end{align*}
        We have, by the definition of the map $v_1(\cdot)$,
        \begin{align*}
            k^\prime n(x_1^\prime)a(t_1^\prime)n(e^{-t_1^\prime}x)  &\in NAv_1(x).
        \end{align*}
        Pluging in $x=0$, or applying (\ref{u1 u2}) with $k=k^\prime$, we will have
        \begin{align*}
           k^\prime n(x_1^\prime)a(t_1^\prime)& \in NA u_1^\prime.
        \end{align*}
        Hence, $ k^\prime n(x_1^\prime)a(t_1^\prime)n(e^{-t_1^\prime}x) $ also lies in the coset $NA u_1^\prime n(e^{-t_1^\prime}x) $. It implies that
        \begin{align*}
             NA u_1^\prime n(e^{-t_1^\prime}x)&=  NAv_1(x),
        \end{align*}
        so that
        \begin{align*}
             u_1^\prime n(e^{-t_1^\prime}x)&\in  NAv_1(x).
        \end{align*}
        By Lemma \ref{Iwasawa k},
        \begin{align*}
            v_1(x)&= \kappa\left( u_1^\prime n(e^{-t_1^\prime}x)  \right)\\
            &= \kappa\left(\begin{pmatrix}e^{i\theta_1}&0\\0&e^{-i\theta_1}\end{pmatrix}\begin{pmatrix}1/\sqrt{2}& \epsilon_1 i/\sqrt{2}\\ \epsilon_1i/\sqrt{2}&1/\sqrt{2}\end{pmatrix} \begin{pmatrix}1&e^{-t_1^\prime}x\\0&1\end{pmatrix}\right)\\
            &=\kappa\left(\frac{1}{\sqrt{2}}\begin{pmatrix}{e^{i\theta_1}} &     (e^{-t_1^\prime }x+\epsilon_1i)e^{i\theta_1}\\\epsilon_1ie^{-i\theta_1}&(\epsilon_1ie^{-t_1^\prime }x+1)e^{-i\theta_1}\end{pmatrix}\right)\\
            &=\frac{1}{\sqrt{e^{-2t_1^\prime }x^2+2} }\begin{pmatrix}                    (-\epsilon_1ie^{-t_1^\prime }x+1)e^{i\theta_1}& \epsilon_1ie^{i\theta_1}\\\epsilon_1ie^{-i\theta_1}& (\epsilon_1ie^{-t_1^\prime }x+1)e^{-i\theta_1}\end{pmatrix},
        \end{align*}
        so
        $$\Psi(v_1(x)) = \frac{-2e^{-t_1^\prime}x}{e^{-2t_1^\prime }x^2+2}, $$
        and
        $$\Theta(v_1(x)) = \frac{e^{-2t_1^\prime}x^2}{e^{-2t_1^\prime }x^2+2}. $$
        Therefore, we have
         \begin{align*}
            \frac{\partial^2}{\partial x_1^2} \tilde{\phi}(x_1^\prime,t_1^\prime,x_2^\prime,t_2^\prime,0,0,0) = \left.\frac{\partial}{\partial x}\left(-e^{-t_1^\prime}\Psi(v_1(x))\right)\right|_{x=0} =\left.\frac{\partial}{\partial x}\left(\frac{2e^{-2t_1^\prime}x}{e^{-2t_1^\prime }x^2+2} \right)\right|_{x=0}=e^{-2t_1^\prime},
        \end{align*}
        and
        \begin{align*}
            \frac{\partial^2}{\partial x_1\partial t_1} \tilde{\phi}(x_1^\prime,t_1^\prime,x_2^\prime,t_2^\prime,0,0,0)=\left.\frac{\partial}{\partial x}\left( -\Theta(v_1(x))\right)\right|_{x=0}=\left.\frac{\partial}{\partial x}\left( \frac{-e^{-2t_1^\prime}x^2}{e^{-2t_1^\prime }x^2+2}\right)\right|_{x=0}=0.
        \end{align*}
        The calculations of $\partial^2 \tilde{\phi}/\partial x_2^2$ and   $\partial^2 \tilde{\phi}/\partial x_2\partial t_2$ are identical. We may define the map $v_2:\BR\to K_0$ by the condition $k^\prime g n(x_2^\prime +x )a(t_2^\prime)\in NAv_2(x)$. By the same calculation as above, we may obtain that
        $  v_2(x)= \kappa\left( u_2^\prime n(e^{-t_2^\prime}x)  \right)$ and 
        \begin{align*}
           & \frac{\partial}{\partial x} \tilde{\phi}(x_1^\prime,t_1^\prime,x_2^\prime+x,t_2^\prime,0,0,0) = e^{-t_2^\prime}\Psi(v_2(x)),\\
            &\frac{\partial}{\partial t_2} \tilde{\phi}(x_1^\prime,t_1^\prime,x_2^\prime+x,t_2^\prime,0,0,0)= \Theta(v_2(x)),
        \end{align*}
        and so
        \begin{align*}
            &\frac{\partial^2}{\partial x_2^2} \tilde{\phi}(x_1^\prime,t_1^\prime,x_2^\prime,t_2^\prime,0,0,0)  = -e^{-2t_2^\prime},\\
            &\frac{\partial^2}{\partial x_2\partial t_2} \tilde{\phi}(x_1^\prime,t_1^\prime,x_2^\prime,t_2^\prime,0,0,0)=0.
        \end{align*}

        To calculate  $\partial^2 \tilde{\phi}/\partial t_1^2$ and $\partial^2 \tilde{\phi}/\partial t_2^2$, we  define maps $w_1,w_2:\BR \to K_0$ by the conditions that
        \begin{align}\label{def of w1 w2}
            k^\prime n(x_1^\prime)a(t_1^\prime+t) \in NAw_1(t)\quad\text{ and }\quad  k^\prime g n(x_2^\prime)a(t_2^\prime+t) \in NAw_2(t).
        \end{align}
        Notice that $w_1(0)=u_1^\prime$ and $w_2(0)=u_2^\prime$.
         Lemma \ref{lemma A} gives
        \begin{align*}
            &\frac{\partial}{\partial t} \tilde{\phi}(x_1^\prime,t_1^\prime+t,x_2^\prime,t_2^\prime,0,0,0) = -\Theta(w_1(t)),\\
            &\frac{\partial}{\partial t} \tilde{\phi}(x_1^\prime,t_1^\prime,x_2^\prime,t_2^\prime+t,0,0,0) = \Theta(w_2(t)).
        \end{align*}
        Since
        \begin{align*}
            k^\prime n(x_1^\prime)a(t_1^\prime) \in NA u_1^\prime\quad\text{ and }\quad k^\prime gn(x_2^\prime)a(t_2^\prime) \in NA u_2^\prime,
        \end{align*}
        (\ref{def of w1 w2}) provides
        \begin{align*}
            NA u_1^\prime a(t)=  NAw_1(t)\quad&\text{ and } \quad NA u_2^\prime a(t)=  NAw_2(t),
        \end{align*}
        so that
        \begin{align*}
            u_1^\prime a(t)\in  NAw_1(t)\quad&\text{ and } \quad u_2^\prime a(t)\in  NAw_2(t).
        \end{align*}
       By Lemma \ref{Iwasawa k}, we have
        \begin{align*}
            w_1(t)&= \kappa\left(u_1^\prime a(t) \right)\\
            &=\kappa\left(\begin{pmatrix}e^{i\theta_1}&0\\0&e^{-i\theta_1}\end{pmatrix}\begin{pmatrix}1/\sqrt{2}& \epsilon_1 i/\sqrt{2}\\ \epsilon_1i/\sqrt{2}&1/\sqrt{2}\end{pmatrix} \begin{pmatrix}e^{t/2}&0\\0&e^{-t/2}\end{pmatrix}\right)\\
            &=\kappa\left( \frac{1}{\sqrt{2}}\begin{pmatrix}e^{t/2+i\theta_1}& e^{-t/2+i\theta_1}\epsilon_1 i\\ e^{t/2-i\theta_1}\epsilon_1i&e^{-t/2-i\theta_1}\end{pmatrix}\right)\\
            &=\frac{1}{\sqrt{e^t+e^{-t}}}\begin{pmatrix} e^{-t/2+i\theta_1}&e^{t/2+i\theta_1}\epsilon_1i\\e^{t/2-i\theta_1}\epsilon_1i&e^{-t/2-i\theta_1} \end{pmatrix},
        \end{align*}
        and
        \begin{align*}
            w_2(t) = \frac{1}{\sqrt{e^t+e^{-t}}}\begin{pmatrix} e^{-t/2+i\theta_2}&e^{t/2+i\theta_2}\epsilon_2i\\e^{t/2-i\theta_2}\epsilon_2i&e^{-t/2-i\theta_2} \end{pmatrix},
        \end{align*}
        and so
        \begin{align*}
            \Theta(w_1(t)) =\Theta(w_2(t)) = \frac{e^{-t}-e^t}{e^{-t}+e^t}.
        \end{align*}
        Therefore, we have
         \begin{align*}
            \frac{\partial^2}{\partial t_1^2} \tilde{\phi}(x_1^\prime,t_1^\prime,x_2^\prime,t_2^\prime,0,0,0)=\left. \frac{\partial}{\partial t}\left(-\Theta(w_1(t))\right)\right|_{t=0}=\left. \frac{\partial}{\partial t}\left(\frac{e^{t}-e^{-t}}{e^{t}+e^{-t}}\right)\right|_{t=0}=1,
        \end{align*}
        and $$ \frac{\partial^2}{\partial t_2^2} \tilde{\phi}(x_1^\prime,t_1^\prime,x_2^\prime,t_2^\prime,0,0,0)=-1.$$
        
        To calculate $\partial^2 \tilde{\phi}/\partial s_i\partial x_1$ and $\partial^2 \tilde{\phi}/\partial s_i\partial t_1$, by Lemma \ref{lemma A}, we have
        \begin{align*}
            \frac{\partial}{\partial x_1} \tilde{\phi}(x_1^\prime,t_1^\prime,x_2^\prime,t_2^\prime,s_1,s_2,s_3) &= \left. \frac{\partial}{\partial x}\left(- A\left( u_1^{-1}(\exp(s_1X_1+s_2X_2+s_3X_3) u_1^\prime)n(x_1^\prime+x)a(t_1^\prime) \right) \right)\right|_{x=0}\\
            &=\left. \frac{\partial}{\partial x}\left(- A\left( u_1^{-1}(\exp(s_1X_1+s_2X_2+s_3X_3) u_1^\prime)n(x_1^\prime)a(t_1^\prime) n(e^{-t_1^\prime}x)\right) \right)\right|_{x=0}\\
            &=-e^{-t_1^\prime}\Psi\circ\kappa \left(  u_1^{-1}(\exp(s_1X_1+s_2X_2+s_3X_3) u_1^\prime) n(x_1^\prime)a(t_1^\prime) \right)\\
            &=-e^{-t_1^\prime}\Psi(\exp(s_1X_1+s_2X_2+s_3X_3)u_1^\prime).
        \end{align*}
        The last equality holds because, if we let $k=u_1^{-1}(\exp(s_1X_1+s_2X_2+s_3X_3) u_1^\prime)$ in \eqref{u1 u2}, then
        \begin{align*}
            u_1^{-1}(\exp(s_1X_1+s_2X_2+s_3X_3) u_1^\prime) n(x_1^\prime)a(t_1^\prime) \in NA \exp(s_1X_1+s_2X_2+s_3X_3) u_1^\prime.
        \end{align*}
        Similarly, we have
        \begin{align*}
            \frac{\partial}{\partial t_1} \tilde{\phi}(x_1^\prime,t_1^\prime,x_2^\prime,t_2^\prime,s_1,s_2,s_3) = -\Theta(\exp(s_1X_1+s_2X_2+s_3X_3)u_1^\prime).
        \end{align*}
         Lemma \ref{gradient of psi theta} implies that
        \begin{align*}
            \left(\frac{\partial^2}{\partial s_1\partial x_1},\frac{\partial^2}{\partial s_2\partial x_1},\frac{\partial^2}{\partial s_3\partial x_1} \right)\tilde{\phi}(x_1^\prime,t_1^\prime,x_2^\prime,t_2^\prime,0,0,0) =\left(                 -2e^{-t_1^\prime}\sin(2\theta_1),         2e^{-t_1^\prime}\cos(2\theta_1),        0  \right),
        \end{align*}
        and
        \begin{align*}
            \left(\frac{\partial^2}{\partial s_1\partial t_1},\frac{\partial^2}{\partial s_2\partial t_1},\frac{\partial^2}{\partial s_3\partial t_1} \right)\tilde{\phi}(x_1^\prime,t_1^\prime,x_2^\prime,t_2^\prime,0,0,0) =\left( 2\epsilon_1 \cos(2\theta_1), 2\epsilon_1 \sin(2\theta_1), 0\right).
        \end{align*}
        
        Now we calculate $\partial^2 \tilde{\phi}/\partial s_i\partial x_2$ and $\partial^2 \tilde{\phi}/\partial s_i\partial t_2$.  From the commutative diagram (\ref{commutative diagram}), we have
        \begin{align*}
            u_2\left(  u_1^{-1}(\exp(s_1X_1+s_2X_2+s_3X_3) u_1^\prime)   \right) = \exp(\alpha_{12}(s_1,s_2,s_3)) u_2^\prime = \exp(r_1X_1+r_2X_2+r_3X_3) u_2^\prime
        \end{align*}
        for $(s_1,s_2,s_3)\in U_1$. We recall that $kgn(x_2^\prime)a(t_2^\prime) \in NA u_2(k)$ for any $k\in K_0$.  Hence, if we let $k =u_1^{-1}(\exp(s_1X_1+s_2X_2+s_3X_3) u_1^\prime) $, then
        \begin{align}
        \label{change of variable for k}
             u_1^{-1}(\exp(s_1X_1+s_2X_2+s_3X_3) u_1^\prime)gn(x_2^\prime)a(t_2^\prime) \in NA \exp(r_1X_1+r_2X_2+r_3X_3) u_2^\prime.
        \end{align}
        Hence,
        \begin{align*}
            \frac{\partial}{\partial x_2} \tilde{\phi}(x_1^\prime,t_1^\prime,x_2^\prime,t_2^\prime,s_1,s_2,s_3) &=  \left.\frac{\partial}{\partial x}\left( A\left( u_1^{-1}(\exp(s_1X_1+s_2X_2+s_3X_3) u_1^\prime)gn(x_2^\prime+x)a(t_2^\prime) \right) \right)\right|_{x=0}\\
            &= \left.\frac{\partial}{\partial x}\left( A\left( u_1^{-1}(\exp(s_1X_1+s_2X_2+s_3X_3) u_1^\prime)n(x_2^\prime)a(t_2^\prime) n(e^{-t_1^\prime}x)\right) \right)\right|_{x=0}\\
            &=e^{-t_2^\prime}\Psi\circ\kappa \left( u_1^{-1}(\exp(s_1X_1+s_2X_2+s_3X_3) u_1^\prime)gn(x_2^\prime)a(t_2^\prime)\right)\\
            &=e^{-t_2^\prime}\Psi\left(\exp(r_1X_1+r_2X_2+r_3X_3) u_2^\prime\right).
        \end{align*}
       Applying the chain rule,  Lemma \ref{gradient of psi theta}, and Proposition \ref{jacobian u1 u2}, we have
        \begin{align*}
            &\left(\frac{\partial^2}{\partial s_1\partial x_2},\frac{\partial^2}{\partial s_2\partial x_2},\frac{\partial^2}{\partial s_3\partial x_2} \right)\tilde{\phi}(x_1^\prime,t_1^\prime,x_2^\prime,t_2^\prime,0,0,0) \\
            =& \left(\frac{\partial}{\partial r_1},\frac{\partial}{\partial r_2},\frac{\partial}{\partial r_3} \right)   \frac{\partial \tilde{\phi}}{\partial x_2}(x_1^\prime,t_1^\prime,x_2^\prime,t_2^\prime,0,0,0) \cdot  \bJ\alpha_{12}(0,0,0)\\
            =&\left(                 2e^{-t_2^\prime}\sin(2\theta_2),         -2e^{-t_2^\prime}\cos(2\theta_2),        0  \right)\begin{pmatrix}e^h& & \\&e^h&\\&&1 \end{pmatrix}\\
            =&\left(                 2e^{h-t_2^\prime}\sin(2\theta_2),         -2e^{h-t_2^\prime}\cos(2\theta_2),        0  \right),
        \end{align*}
        and similarly
        \begin{align*}
            &\left(\frac{\partial^2}{\partial s_1\partial t_2},\frac{\partial^2}{\partial s_2\partial t_2},\frac{\partial^2}{\partial s_3\partial t_2} \right)\tilde{\phi}(x_1^\prime,t_1^\prime,x_2^\prime,t_2^\prime,0,0,0)\\
            =&\left( -2\epsilon_2 \cos(2\theta_2), -2\epsilon_2 \sin(2\theta_2), 0\right)\begin{pmatrix}e^h& & \\&e^h&\\&&1 \end{pmatrix}\\
            =&\left( -2\epsilon_2 e^h\cos(2\theta_2), -2\epsilon_2 e^h\sin(2\theta_2), 0\right).
        \end{align*}
        
        To calculate  $\partial^2 \tilde{\phi}/\partial s_i\partial s_j$, by applying Lemma \ref{spliting A} and the condition (\ref{definition of h}), we obtain
        \begin{align*}
            &\tilde{\phi}(x_1^\prime,t_1^\prime,x_2^\prime,t_2^\prime,s_1,s_2,s_3)\\
            =& - A\left( \Phi_{n(x_1^\prime)a(t_1^\prime)}^{-1}(\exp(s_1X_1+s_2X_2+s_3X_3) u_1^\prime)n(x_1^\prime)a(t_1^\prime) \right) \\
            &+ A\left( \Phi_{n(x_1^\prime)a(t_1^\prime)}^{-1}(\exp(s_1X_1+s_2X_2+s_3X_3) u_1^\prime)gn(x_2^\prime)a(t_2^\prime) \right)\\
            =&A\left(\exp(s_1X_1+s_2X_2+s_3X_3) u_1^\prime a(-t_1^\prime)n(-x_1^\prime)g n(x_2^\prime)a(t_2^\prime)\right)\\
            =& A\left(\exp(s_1X_1+s_2X_2+s_3X_3) a(h)\right).
        \end{align*}
        This gives, by Lemma \ref{lemma A},
        \begin{align*}
            \frac{\partial^2}{\partial s_1^2}\tilde{\phi}(x_1^\prime,t_1^\prime,x_2^\prime,t_2^\prime,0,0,0) =  \left.\frac{\partial^2}{\partial t^2} A\left(\exp(t X_1) a(h)\right)\right|_{t=0} = \left.\frac{\partial^2}{\partial t^2} \left(h-\log(\cos^2t+e^{2h}\sin^2t)\right)\right|_{t=0} =2(1-e^{2h}).
        \end{align*}
        Following similar calculations, we obtain
        $\frac{\partial^2}{\partial s_2^2}\tilde{\phi}(x_1^\prime,t_1^\prime,x_2^\prime,t_2^\prime,0,0,0) =2(1-e^{2h}) $, $\frac{\partial^2}{\partial s_3^2}\tilde{\phi}(x_1^\prime,t_1^\prime,x_2^\prime,t_2^\prime,0,0,0) =0$ and $\frac{\partial^2}{\partial s_i\partial s_j}\tilde{\phi}(x_1^\prime,t_1^\prime,x_2^\prime,t_2^\prime,0,0,0) =0$ with $i\neq j$.
    \end{proof}

    \subsection{The functions $\psi$ and $\tilde{\psi}$}\label{sec degenerate case}
    The computation of the transversal Hessian $D_0$ shows that although $D_0$ is non-degenerate if $g \notin H^\prime$, its determinant tends to $0$ as $h\to 0$, so the bound of $ J(s,g;x_1^\prime,t_1^\prime,x_2^\prime,t_2^\prime,k^\prime)$ given by stationary phase can not be applied for $h$ near $0$, or more generally when $d(g,H^\prime)$ is near $0$. To deal with this issue, we will first eliminate the variables $x_1,t_1,x_2,t_2$ because if $(x_1,t_1,x_2,t_2,k,g)$ is a critical point of $\phi$, then $x_1,t_1,x_2,t_2$ can be uniquely determined when $k$ and $g$ are fixed. After eliminating the variables $x_1,t_1,x_2,t_2$, we reduce the phase function $\phi$ to a new function $\psi$ and show that $\psi$ has a Hessian that behaves almost the same as $D$ at the critical points.
    
    Define $\cP =  K_0\times G_0$.
    Note that the image of $g\BH^2$  for $g\in G_0$ in the upper half space model of $\BH^3$ must be in one of the following two cases:
    \begin{enumerate}
        \item $g\BH^2$ is a vertical plane. More precisely, it is
        \begin{align*}
            \{(z,t)\in\BH^3\,|\,z\in L,t>0\},\quad\text{ for some real affine line }L\subset\BC.
        \end{align*}
        \item $g\BH^2$ is a hemisphere orthogonal to the boundary $\{(z,0)\,|\,z\in\BC\}$.
    \end{enumerate}
    See e.g. \cite[Proposition A.5.6]{MR1219310} for the above result.
    We define $\cS\subset\cP $ to be the set consisting of $(k,g)\in\cP$ so that at least one of $k\BH^2$ and $kg\BH^2$ is a vertical plane. Note that $\cS$ is a subset of $\cP$ with measure $0$. We define functions
    \begin{align*}
        \eta_1,\xi_1,\eta_2,\xi_2:\cP\backslash\cS\to\BR
    \end{align*}
    by requiring that $k. \left(\eta_1(k,g), e^{\xi_1(k,g)}\right)$ is the unique point on $k\BH^2$ with the highest $A$, i.e. it is the north pole of the hemisphere, and likewise for $\eta_2(k,g)$ and $\xi_2(k,g)$ for $kg\BH^2$. Note that as $\eta_1$ and $\xi_1$ do not depend on $g$, we will omit this argument of the function. It follows from Proposition \ref{crtical for x} that
    \begin{align}\label{50}
        kn\left( \eta_1(k) \right) a\left(\xi_1(k)\right) \in NA\begin{pmatrix}e^{i\tau_1(k)}&0\\0&e^{-i\tau_1(k)}\end{pmatrix}\begin{pmatrix}1/\sqrt{2}& \sigma_1(k)i/\sqrt{2}\\ \sigma_1(k)i/\sqrt{2}&1/\sqrt{2}\end{pmatrix},
    \end{align}
    and
    \begin{align}\label{50'}
        kgn\left( \eta_2(k,g) \right) a\left(\xi_2(k,g)\right) \in NA\begin{pmatrix}e^{i\tau_2(k,g)}&0\\0&e^{-i\tau_2(k,g)}\end{pmatrix}\begin{pmatrix}1/\sqrt{2}& \sigma_2(k,g)i/\sqrt{2}\\\sigma_2(k,g)i/\sqrt{2}&1/\sqrt{2}\end{pmatrix}.
    \end{align}
    Here $\tau_1$, $\tau_2$ are smooth functions valued in $\BR/2\pi\BZ$ and  $\sigma_1,\sigma_2$ are locally constant functions valued in $\{\pm1\}$.
    Equivalently, by the definition (\ref{defn of phi}) and Proposition \ref{crtical for x}, the functions $\eta_1$, $\xi_1$, $\eta_2$ and $\xi_2$ may also be characterized as the unique functions satisfying
    \begin{align}\label{51}
        \frac{\partial \phi}{\partial x_1}(\eta_1(k),\xi_1(k),x_2,t_2,k,g) =  \frac{\partial \phi}{\partial t_1}(\eta_1(k),\xi_1(k),x_2,t_2,k,g) =0,
    \end{align}
    and
    \begin{align}\label{51'}
        \frac{\partial \phi}{\partial x_2}(x_1,t_1,\eta_2(k,g),\xi_2(k,g),g) =  \frac{\partial \phi}{\partial t_2}(x_1,t_1,\eta_2(k,g),\xi_2(k,g),g) = 0.
    \end{align}
    We define
    \begin{align}\label{defn of psi}
    \begin{split}
        &\psi:\cP\backslash\cS \to \BR\\
        &\psi(k,g) = \phi(\eta_1(k),\xi_1(k),\eta_2(k,g),\xi_2(k,g),k,g).
        \end{split}
    \end{align}
    It is clear that $g\in H^\prime$ implies $\psi(k,g)=0$.
    Combing (\ref{51}) and (\ref{51'}) with Proposition \ref{critical points}, we get the following lemma.
    \begin{lemma}\label{lem critical pt of psi}
        The point $(k^\prime,g^\prime)\in\cP\backslash\cS$ is a critical point of $\psi$ with respect to the variable $k$ exactly when $(\eta_1(k^\prime),\xi_1(k^\prime),\eta_2(k^\prime,g^\prime),\xi_2(k^\prime,g^\prime),k^\prime,g^\prime)$ is a critical point of $\phi$ with respect to $(x_1,t_1,x_2,t_2,k)$.
    \end{lemma}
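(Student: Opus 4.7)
The plan is a straightforward envelope-type chain rule computation. I will show that once we substitute $x_i = \eta_i$, $t_i = \xi_i$ into $\phi$, the only surviving piece of the derivative in $k$ comes from the explicit dependence on $k$.

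First I would differentiate $\psi(k,g) = \phi(\eta_1(k),\xi_1(k),\eta_2(k,g),\xi_2(k,g),k,g)$ with respect to $k$ (with $g = g'$ fixed). Parametrizing a neighborhood of $k'$ in $K_0$ by some coordinates and applying the chain rule gives
\begin{align*}
\frac{\partial \psi}{\partial k}\bigg|_{(k',g')} &= \frac{\partial \phi}{\partial x_1}\cdot \frac{\partial \eta_1}{\partial k} + \frac{\partial \phi}{\partial t_1}\cdot \frac{\partial \xi_1}{\partial k} + \frac{\partial \phi}{\partial x_2}\cdot \frac{\partial \eta_2}{\partial k} + \frac{\partial \phi}{\partial t_2}\cdot \frac{\partial \xi_2}{\partial k} + \frac{\partial \phi}{\partial k},
\end{align*}
where all partial derivatives of $\phi$ are evaluated at the point $P' := (\eta_1(k'),\xi_1(k'),\eta_2(k',g'),\xi_2(k',g'),k',g')$.

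Next I would invoke the defining relations (\ref{51}) and (\ref{51'}): by construction of $\eta_1,\xi_1,\eta_2,\xi_2$, the four partials $\partial\phi/\partial x_1, \partial\phi/\partial t_1, \partial\phi/\partial x_2, \partial\phi/\partial t_2$ all vanish at $P'$. Therefore the chain rule collapses to
\begin{align*}
\frac{\partial \psi}{\partial k}\bigg|_{(k',g')} = \frac{\partial \phi}{\partial k}\bigg|_{P'}.
\end{align*}

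From here the equivalence is immediate. By definition, $\phi$ has a critical point at $(\eta_1(k'),\xi_1(k'),\eta_2(k',g'),\xi_2(k',g'),k')$ in the variables $(x_1,t_1,x_2,t_2,k)$ iff all five groups of partials vanish at $P'$; but the first four vanish automatically by (\ref{51})--(\ref{51'}), so this condition is equivalent to $\partial\phi/\partial k|_{P'} = 0$, which by the displayed identity above is equivalent to $k'$ being a critical point of $\psi(\cdot,g')$. This completes the proof. There is no real obstacle here — the statement is the standard envelope principle, and the only thing to verify is the smoothness of $\eta_i,\xi_i$ in $k$ on $\cP\setminus\cS$, which is inherited from the smoothness of the Iwasawa decomposition since these functions are defined by a smooth implicit equation on $\cP\setminus\cS$ (namely that $k\BH^2$ and $kg\BH^2$ are not vertical, so the north pole is a smooth function of the data).
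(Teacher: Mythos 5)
Your argument is correct and matches the paper's intent: the paper derives this lemma in one line by ``combining (\ref{51}) and (\ref{51'})'' with the critical-point description, and the content of that remark is exactly your envelope/chain-rule computation, in which the defining relations (\ref{51})--(\ref{51'}) kill the four inner partials so that $\partial_k\psi(k',g')=\partial_k\phi$ at the substituted point. Your closing observation on the smoothness of $\eta_i,\xi_i$ away from $\cS$ is a reasonable addition, and nothing further is needed.
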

    
    Fix a $k^\prime\in K_0$. We temporarily do not assume $k^\prime$ is a critical point.
     We define $u(\cdot) = \Phi_{n(\eta_1(k^\prime))a(\xi_1(k^\prime))}(\cdot)$ to be the map from $K_0$ to itself so that
    \begin{align}\label{defn of Phi k prime}
        kn(\eta_1(k^\prime))a(\xi_1(k^\prime))\in NA u(k).
    \end{align}
    The map $u$ is a diffeomorphism by Lemma \ref{change of variable}.
    
    \begin{proposition}\label{derivatives of eta xi wrt si}
        Let $k^\prime\in K_0$ be fixed and define
        \begin{align*}
            k(s_1,s_2,s_3) = u^{-1}\left(\exp(s_1X_1+s_2X_2+s_3X_3)u(k^\prime)\right),
        \end{align*}
        $ \eta_1(s_1,s_2,s_3) = \eta_1\left( k(s_1,s_2,s_3)\right)$, and $\xi_1(s_1,s_2,s_3) = \xi_1\left( k(s_1,s_2,s_3) \right)$, then
        \begin{align*}
                \begin{pmatrix}
                    \partial \eta_1/\partial s_1& \partial \eta_1/\partial s_2&\partial \eta_1/\partial s_3\\
                   \partial \xi_1/\partial s_1& \partial \xi_1/\partial s_2&\partial \xi_1/\partial s_3
                \end{pmatrix}(0,0) = \begin{pmatrix}
                      2e^{\xi_1(k^\prime)} \sin(2\theta_1)&-2e^{\xi_1(k^\prime)} \cos(2\theta_1)&0\\
                   -2\epsilon_1\cos(2\theta_1)&-2\epsilon_1\sin(2\theta_1)&0
                \end{pmatrix},
            \end{align*}
            where $\theta_1 = \tau_1(k^\prime)$ and $\epsilon_1 = \sigma_1(k^\prime)$.
    
            Hence, the map $k\mapsto (\eta_1(k),\xi_1(k))$ from $\mathrm{U}(1)\backslash K_0$ to $\BR^2$ is a local diffeomorphism.
    \end{proposition}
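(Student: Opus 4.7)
The plan is to apply the implicit function theorem to the system that characterizes $(\eta_1,\xi_1)$ and then compute the Jacobian using derivatives already available elsewhere in the paper. By (\ref{51}) together with Lemma \ref{lemma A}, the pair $(\eta_1(k),\xi_1(k))$ is the unique solution near $k'$ of
\begin{align*}
\Psi\bigl(\kappa(k\, n(\eta_1(k))\, a(\xi_1(k)))\bigr) = 0,\qquad
\Theta\bigl(\kappa(k\, n(\eta_1(k))\, a(\xi_1(k)))\bigr) = 0,
\end{align*}
and the defining relation (\ref{50}) at $k=k'$ shows that this common Iwasawa projection equals $u_1':=\Phi_{k'}(k')$, which has the form displayed in (\ref{definition of ujprime}) with parameters $\theta_1,\epsilon_1$.

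To parametrize near $k'$ I set $k(s)=\Phi_{k'}^{-1}(\exp(sX)u_1')$ with $X=s_1X_1+s_2X_2+s_3X_3$, and let $\tilde x(s) = \eta_1(k(s))-\eta_1(k')$ and $\tilde t(s) = \xi_1(k(s))-\xi_1(k')$. Using the commutation $n(x)a(\xi)=a(\xi)n(e^{-\xi}x)$ together with the definition (\ref{defn of Phi k prime}) of $\Phi_{k'}$, one checks
\begin{align*}
k(s)\, n(\eta_1(k(s)))\, a(\xi_1(k(s))) \in NA\cdot \exp(sX)\, u_1'\, n\bigl(e^{-\xi_1(k')}\tilde x(s)\bigr)\, a(\tilde t(s)),
\end{align*}
so the pair of defining equations becomes $F(s,\tilde x,\tilde t)=0$ at the origin, where
\begin{align*}
F(s,\tilde x,\tilde t) := \bigl(\Psi,\Theta\bigr)\circ \kappa\bigl(\exp(sX)\, u_1'\, n(e^{-\xi_1(k')}\tilde x)\, a(\tilde t)\bigr).
\end{align*}
Since $\partial\eta_1/\partial s_i=\partial\tilde x/\partial s_i$ and $\partial\xi_1/\partial s_i=\partial\tilde t/\partial s_i$, the desired Jacobian equals $-(\partial_{(\tilde x,\tilde t)}F)^{-1}\,\partial_sF$ evaluated at $0$.

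The three blocks of partials of $F$ at the origin are accessible without new work. Since $\exp(sX)u_1'\in K_0$ the Iwasawa $\kappa$ is the identity on it, so Lemma \ref{gradient of psi theta} applied at $u_1'$ yields
\begin{align*}
\partial_s F(0) = \begin{pmatrix} 2\sin(2\theta_1) & -2\cos(2\theta_1) & 0 \\ -2\epsilon_1\cos(2\theta_1) & -2\epsilon_1\sin(2\theta_1) & 0 \end{pmatrix}.
\end{align*}
The $\tilde x$- and $\tilde t$-derivatives reduce to differentiating $(\Psi,\Theta)\circ\kappa(u_1'n(y))$ and $(\Psi,\Theta)\circ\kappa(u_1'a(t))$ at $0$; these are exactly the computations done for the auxiliary maps $v_1,w_1$ in the proof of Proposition \ref{Hessian}, and together with the chain-rule factor $e^{-\xi_1(k')}$ coming from $n(e^{-\xi_1(k')}\tilde x)$ they give
\begin{align*}
\partial_{(\tilde x,\tilde t)} F(0) = \begin{pmatrix} -e^{-\xi_1(k')} & 0 \\ 0 & -1 \end{pmatrix}.
\end{align*}
Multiplying out produces the stated Jacobian. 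The local diffeomorphism claim then follows since its $(s_1,s_2)$-minor has determinant $-4\epsilon_1 e^{\xi_1(k')}\neq 0$, while the vanishing of the $s_3$-column simply reflects the left-$\mathrm{U}(1)$-invariance of $\Psi,\Theta$ (with $X_3$ spanning $\fu(1)$), so the map descends to $\mathrm{U}(1)\backslash K_0$ and is nonsingular there. No step is truly delicate; the one bookkeeping item worth watching is the factor $e^{-\xi_1(k')}$ from the $n$-$a$ commutation, which is exactly what produces the $e^{\xi_1(k')}$ in the first row of the final Jacobian.
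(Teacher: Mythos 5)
Your proof is correct, and it reaches the stated Jacobian by a route that is organized differently from the paper's. The paper also differentiates the implicit characterization of $(\eta_1,\xi_1)$, but it does so by turning the membership relation (\ref{50}) into an explicit scalar identity: it takes inverses, lets the resulting element act on the boundary point $\infty$ of $\BH^3$, and differentiates the ensuing complex equation in $s_1$, $s_2$, $s_3$ separately (separating real and imaginary parts to read off $\partial\eta_1/\partial s_i$ and $\partial\xi_1/\partial s_i$). You instead phrase the north-pole condition as the vanishing of $(\Psi,\Theta)\circ\kappa$ at $kn(\eta_1)a(\xi_1)$ — which is exactly the equivalence recorded in (\ref{51}) via Lemma \ref{lemma A} and Proposition \ref{crtical for x} — and apply the implicit function theorem, computing $-(\partial_{(\tilde x,\tilde t)}F)^{-1}\partial_sF$ from data already in the paper: $\partial_sF(0)$ is Lemma \ref{gradient of psi theta} evaluated at $u_1'$ (legitimate since $\kappa$ is the identity on $K_0$), and $\partial_{(\tilde x,\tilde t)}F(0)$ comes from the $v_1,w_1$ computations in Proposition \ref{Hessian} (note $\Psi(w_1(t))\equiv 0$ is not displayed there but is immediate from the explicit formula for $w_1(t)$; also these computations only use that $u_1'$ has the form (\ref{definition of ujprime}), so they apply here without the critical-point hypothesis, consistent with the paper's later remark). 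Two small points worth keeping explicit if you write this up: the identification of the IFT solution with $(\eta_1(k(s)),\xi_1(k(s)))$ rests on the uniqueness of the solution of (\ref{51}) for each fixed $k$ (the north pole), which the paper asserts; and the nondegeneracy $\det\partial_{(\tilde x,\tilde t)}F(0)=e^{-\xi_1(k')}\neq 0$ is what licenses the IFT. Your approach buys a cleaner, reusable linear-algebra packaging (and smoothness of $\eta_1,\xi_1$ as a byproduct), while the paper's boundary-point computation is self-contained and avoids invoking the IFT; your treatment of the final local-diffeomorphism claim (nonvanishing $(s_1,s_2)$-minor of determinant $-4\epsilon_1e^{\xi_1(k')}$ plus left-$\mathrm{U}(1)$-invariance, with the $s_3$-direction being the $\mathrm{U}(1)$-orbit through $k'$ since $\Phi_{k'}$ commutes with left $\mathrm{U}(1)$-translation) is in fact more explicit than the paper's, which leaves that step implicit.
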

    \begin{proof}
         If we let $k = k(s_1,s_2,s_3)$ in (\ref{50}), then
        \begin{align}\label{k s1 eta xi 1}
            \begin{split}
                &k(s_1,s_2,s_3)n\left(\eta_1(s_1,s_2,s_3)\right)a\left(\xi_1(s_1,s_2,s_3)\right) \\
            \in& NA \begin{pmatrix}e^{i\tau_1(k(s_1,s_2,s_3))}&0\\0&e^{-i\tau_1(k(s_1,s_2,s_3))}\end{pmatrix}\begin{pmatrix}1/\sqrt{2}& \sigma_1(k(s_1,s_2,s_3))i/\sqrt{2}\\ \sigma_1(k(s_1,s_2,s_3))i/\sqrt{2}&1/\sqrt{2}\end{pmatrix}.
            \end{split}
        \end{align}
        If we let $k=k(s_1,s_2,s_3)$ in (\ref{defn of Phi k prime}) and let $x_1^\prime = \eta_1(k^\prime), t_1^\prime = \xi_1(k^\prime)$, then
        \begin{align}\label{k s1 x1t1prime}
            k(s_1,s_2,s_3)n\left(x_1^\prime\right)a\left(t_1^\prime\right) \in  NA u(k(s_1,s_2,s_3)) =  NA \exp(s_1X_1+s_2X_2+s_3X_3) u(k^\prime).
        \end{align}
        Here the notations $x_1^\prime, t_1^\prime, k^\prime$ do not mean they form a critical point, which differs from the previous subsection. Following from Proposition \ref{crtical for x},
        \begin{align*}
            u(k^\prime) = \begin{pmatrix}e^{i\theta_1}&0\\0&e^{-i\theta_1}\end{pmatrix}\begin{pmatrix}1/\sqrt{2}& \epsilon_1i/\sqrt{2}\\\epsilon_1i/\sqrt{2}&1/\sqrt{2}\end{pmatrix},
        \end{align*}
        with $\theta_1 = \tau_1(k^\prime)$ and $\epsilon_1 = \sigma_1(k^\prime)$. Since $\sigma_1(k(0,0,0)) = \sigma_1(k^\prime) = \epsilon_1$ and the function $\sigma_1$ is locally constant, it may be seen $\sigma_1(k(s_1,s_2,s_3)) = \epsilon_1$ for $s_i$ close to $0$.
        Hence,
        \begin{align*}
            &\exp(s_1X_1+s_2X_2+s_3X_3)\begin{pmatrix}e^{i\theta_1}&0\\0&e^{-i\theta_1}\end{pmatrix}\begin{pmatrix}1/\sqrt{2}& \epsilon_1i/\sqrt{2}\\\epsilon_1i/\sqrt{2}&1/\sqrt{2}\end{pmatrix}n\left(e^{-t_1^\prime}(\eta_1(s_1,s_2,s_3)-x_1^\prime)\right)a\left(\xi_1(s_1,s_2,s_3)-t_1^\prime\right)\\
            =&\exp(s_1X_1+s_2X_2+s_3X_3) u(k^\prime) n\left(e^{-t_1^\prime}(\eta_1(s_1,s_2,s_3)-x_1^\prime)\right)a(-t_1^\prime)a\left(\xi_1(s_1,s_2,s_3)\right)\\
            =&\exp(s_1X_1+s_2X_2+s_3X_3) u(k^\prime) a(-t_1^\prime)n(-x_1^\prime)n\left(\eta_1(s_1,s_2,s_3)\right)a\left(\xi_1(s_1,s_2,s_3)\right)\\
            =& \exp(s_1X_1+s_2X_2+s_3X_3) u(k^\prime) \cdot\left(k(s_1,s_2,s_3)n(x_1^\prime) a(t_1^\prime)\right)^{-1}\cdot\left(k(s_1,s_2,s_3)n\left(\eta_1(s_1,s_2,s_3)\right)a\left(\xi_1(s_1,s_2,s_3)\right) \right)\\
            \in& NA \begin{pmatrix}e^{i\tau_1(k(s_1,s_2,s_3))}&0\\0&e^{-i\tau_1(k(s_1,s_2,s_3))}\end{pmatrix}\begin{pmatrix}1/\sqrt{2}& \epsilon_1i/\sqrt{2}\\\epsilon_1i/\sqrt{2}&1/\sqrt{2}\end{pmatrix}.
        \end{align*}
        The last line above holds because of (\ref{k s1 eta xi 1}) and (\ref{k s1 x1t1prime}).
        Taking the inverse gives
        \begin{align}\label{belong infinity}
        \begin{split}
            a\left(-(\xi_1(s_1,s_2,s_3)-t_1^\prime)\right)n\left(-e^{-t_1^\prime}(\eta_1(s_1,s_2,s_3)-x_1^\prime)\right)\begin{pmatrix}1/\sqrt{2}& -\epsilon_1i/\sqrt{2}\\-\epsilon_1i/\sqrt{2}&1/\sqrt{2}\end{pmatrix}\\
            \begin{pmatrix}e^{-i\theta_1}&0\\0&e^{i\theta_1}\end{pmatrix}\exp(-s_1X_1-s_2X_2-s_3X_3) \in\begin{pmatrix}1/\sqrt{2}& -\epsilon_1i/\sqrt{2}\\-\epsilon_1i/\sqrt{2}&1/\sqrt{2}\end{pmatrix}\operatorname{U}(1)AN.
        \end{split}
        \end{align}
        If we let both sides of \eqref{belong infinity} act on the point $\infty$ in $\BH^3$ with $s_2=s_3=0$, we obtain
        \begin{align*}
            e^{-(\xi_1(s_1,0,0)-t_1^\prime)}\left(\frac{ie^{-2i\theta_1}\cos s_1-\epsilon_1i\sin s_1}{\epsilon_1 e^{-2i\theta_1}\cos s_1 +\sin s_1} -e^{-t_1^\prime}(\eta_1(s_1,0,0)-x_1^\prime)   \right)= \epsilon_1 i.
        \end{align*}
        Taking the derivative with respect to $s_1$ at $0$ gives
        \begin{align*}
            \epsilon_1 i\frac{\partial\xi_1}{\partial s_1}(0,0,0) +2i e^{2i\theta_1}+ e^{-t_1^\prime}\frac{\partial\eta_1}{\partial s_1}(0,0,0) = 0.
        \end{align*}
        Since $\eta_1,\xi_1$ are real-valued,
        \begin{align*}
            \frac{\partial\eta_1}{\partial s_1}(0,0,0) =2e^{t_1^\prime} \sin(2\theta_1) \quad\text{ and }\quad
            \frac{\partial\xi_1}{\partial s_1}(0,0,0) =-2\epsilon_1\cos(2\theta_1).
        \end{align*}
        
        To calculate $\partial \eta_1/\partial s_2(0,0,0)$, $\partial \eta_1/\partial s_3(0,0,0)$, $\partial \xi_1/\partial s_2(0,0,0)$ and $\partial \xi_1/\partial s_3(0,0,0)$, we let both sides of (\ref{belong infinity}) act on $\infty$ with $s_1 = s_3=0$ and with $s_1=s_2=0$. Then we obtain
        \begin{align*}
            e^{-(\xi_1(0,s_2,0)-t_1^\prime)}\left(\frac{-e^{-2i\theta_1}\cos s_2-\epsilon_1i\sin s_2}{\epsilon_1i e^{-2i\theta_1}\cos s_2 +\sin s_2} -e^{-t_1^\prime}(\eta_1(0,s_2,0)-x_1^\prime)   \right)= \epsilon_1 i,
        \end{align*}
        and
        \begin{align*}
            e^{-(\xi_1(0,0,s_3)-t_1^\prime)}\left(\epsilon_1 i -e^{-t_1^\prime}(\eta_1(0,0,s_3)-x_1^\prime)   \right)= \epsilon_1 i.
        \end{align*}
        Then taking the derivatives with respect to $s_2$ and $s_3$ respectively at $0$ gives
        \begin{align*}
             \frac{\partial\eta_1}{\partial s_2}(0,0,0) = -2e^{t_1^\prime} \cos(2\theta_1),\\
             \frac{\partial\xi_1}{\partial s_2}(0,0,0)=-2\epsilon_1\sin(2\theta_1),\\
             \frac{\partial\eta_1}{\partial s_3}(0,0,0) = \frac{\partial\xi_1}{\partial s_3}(0,0,0)=0.
        \end{align*}
    \end{proof}

    Later when we say $(k^\prime,g^\prime)$ is a critical point of $\psi$, it always means a critical point with respect to $k$-variable, i.e., $k^\prime$ is a critical point of $\psi(\cdot,g^\prime)$.
     Suppose $(k^\prime,g^\prime)$ is a critical point of $\psi$. As the previous subsection, we may look at $\psi$ under the local chart to study its Hessian.
    Define
    \begin{align}\label{tilde psi}
        \tilde{\psi}(s_1,s_2,s_3) = \psi\left( u^{-1}\left(\exp(s_1X_1+s_2X_2+s_3X_3)u(k^\prime)\right),g^\prime \right),
    \end{align}
    so $\tilde{\psi}$ is the expression of $\psi$ near the critical point $(k^\prime,g^\prime)$ under exponential map composed with $u(\cdot)$, and $(0,0,0)$ is a critical point of $\tilde{\psi}$.
    Recall our notation in Section \ref{subsection hessians}. We defined $x_1^\prime = \eta_1(k^\prime)$, $t_1^\prime = \xi_1(k^\prime)$, $x_2^\prime = \eta_2(k^\prime,g^\prime)$, $t_2^\prime = \xi_2(k^\prime,g^\prime)$, and $\theta_j\in\BR/2\pi\BZ$, $\epsilon_j\in\{\pm 1\}$ ($j=1,2$) by
    \begin{align*}
        k^\prime n(x_1^\prime) a(t_1^\prime) \in NA \begin{pmatrix}e^{i\theta_1}&0\\0&e^{-i\theta_1}\end{pmatrix}\begin{pmatrix}1/\sqrt{2}& \epsilon_1 i/\sqrt{2}\\ \epsilon_1i/\sqrt{2}&1/\sqrt{2}\end{pmatrix},\\
        k^\prime g^\prime n(x_2^\prime) a(t_2^\prime) \in NA \begin{pmatrix}e^{i\theta_2}&0\\0&e^{-i\theta_2}\end{pmatrix}\begin{pmatrix}1/\sqrt{2}& \epsilon_2 i/\sqrt{2}\\ \epsilon_2i/\sqrt{2}&1/\sqrt{2}\end{pmatrix}.
    \end{align*}
    Then (\ref{50}) and (\ref{50'}) give the relations $\theta_1 = \tau_1(k^\prime)$, $\epsilon_1 = \sigma_1(k^\prime)$, $\theta_2 = \tau_2(k^\prime,g^\prime)$ and $\epsilon_2 = \sigma_2(k^\prime,g^\prime)$.
    
    \begin{corollary}\label{Hessian of psi}
        The Hessian of $\tilde{\psi}$ at $(0,0,0)$ is
        $
            D^\prime = \begin{pmatrix}
                   D^\prime_0&\\&0
            \end{pmatrix}
        $
        with
        \begin{align*}
            D_0^\prime = \begin{pmatrix}
                   -2(1-e^{2h}) &\\
                   &-2(1-e^{2h})
            \end{pmatrix}.
        \end{align*}
        The determinant of $D_0^\prime$ is
        \begin{align*}
            \det(D_0^\prime) = 4(1-e^{2h})^2.
        \end{align*}
    \end{corollary}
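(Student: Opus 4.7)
The plan is to exploit the implicit-function / envelope-theorem structure built into the definition of $\psi$. First, observe that the local $k$-chart used to define $\tilde{\psi}$ in (\ref{tilde psi}) agrees with the one used for $\tilde{\phi}$ in (\ref{phi tilde}), since $u_1 = \Phi_{n(x_1^\prime)a(t_1^\prime)} = \Phi_{k^\prime}$ and $u_1^\prime = \Phi_{k^\prime}(k^\prime)$. Under this identification we may write
\begin{align*}
    \tilde{\psi}(s) \;=\; \tilde{\phi}\bigl(X(s),\, s\bigr),
\end{align*}
where $X(s) = \bigl(\eta_1(k(s)),\,\xi_1(k(s)),\,\eta_2(k(s),g^\prime),\,\xi_2(k(s),g^\prime)\bigr)$ and, by (\ref{51}) and (\ref{51'}), $X(s)$ is characterized implicitly by $\partial\tilde{\phi}/\partial X \equiv 0$ on a neighborhood of $s=0$. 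Differentiating this relation once yields $\partial X/\partial s(0) = -D_{XX}^{-1}D_{Xs}$, where $D_{XX}$ and $D_{Xs}$ denote the blocks of the full Hessian $D$ in Proposition~\ref{Hessian} corresponding to $(x_1,t_1,x_2,t_2)$ and $(s_1,s_2,s_3)$; this is consistent with the explicit entries already computed in Proposition~\ref{derivatives of eta xi wrt si}.

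A standard chain-rule calculation — in which every term proportional to $\partial\tilde{\phi}/\partial X$ vanishes at the critical point by the defining equations for $\eta_i,\xi_i$ — then gives the Schur-complement identity
\begin{align*}
    \operatorname{Hess}(\tilde{\psi})(0) \;=\; D_{ss} \,-\, D_{sX}\, D_{XX}^{-1}\, D_{Xs}.
\end{align*}
From Proposition~\ref{Hessian}, $D_{XX} = \operatorname{diag}(e^{-2t_1^\prime},\,1,\,-e^{-2t_2^\prime},\,-1)$, hence $D_{XX}^{-1} = \operatorname{diag}(e^{2t_1^\prime},\,1,\,-e^{2t_2^\prime},\,-1)$, and the columns of $D_{Xs}$ are read off directly from the off-diagonal part of $D_0$.

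The remaining work is to multiply these matrices out. Using $\sin^2(2\theta_j)+\cos^2(2\theta_j)=1$ and $\epsilon_j^2=1$, the $(1,1)$ and $(2,2)$ entries of $D_{sX}D_{XX}^{-1}D_{Xs}$ both collapse to $4(1-e^{2h})$; the $(3,3)$ entry is $0$ since the $s_3$-column of $D_{Xs}$ is zero; and the $(1,2)$ entry vanishes because the $\theta_1$- and $\theta_2$-contributions cancel separately through $\sin(2\theta_j)\cos(2\theta_j) - \cos(2\theta_j)\sin(2\theta_j) = 0$. Subtracting from $D_{ss} = \operatorname{diag}(2(1-e^{2h}),\,2(1-e^{2h}),\,0)$ produces exactly $D_0^\prime = -2(1-e^{2h})I_2$ in the nondegenerate block, and the determinant formula follows immediately. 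The only real obstacle is bookkeeping: one must carefully verify the identification of local charts so that the blocks of $D$ from Proposition~\ref{Hessian} can be inserted without extra Jacobian factors — there is no genuine conceptual difficulty, as the entire Hessian has already been computed in the preceding proposition.
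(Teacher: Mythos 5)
Your proposal is correct, and the numbers check out: with the blocks of $D$ from Proposition \ref{Hessian}, $D_{XX}=\diag(e^{-2t_1^\prime},1,-e^{-2t_2^\prime},-1)$, the Schur complement $D_{ss}-D_{sX}D_{XX}^{-1}D_{Xs}$ is indeed $\diag\left(-2(1-e^{2h}),-2(1-e^{2h}),0\right)$, and your chart identification $u_1=\Phi_{k^\prime}$, $u_1^\prime=\Phi_{k^\prime}(k^\prime)$ is exactly the one under which (\ref{tilde psi}) and (\ref{phi tilde}) are written, so no extra Jacobian factors enter.

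The route differs from the paper's in a worthwhile way. The paper also writes $\tilde{\psi}(s)=\tilde{\phi}(X(s),s)$ and applies the chain rule in the form $D^\prime=B^{t}DB$, but it computes the Jacobian $B=\partial(X(s),s)/\partial s$ explicitly and geometrically: the $(\eta_1,\xi_1)$-rows come from Proposition \ref{derivatives of eta xi wrt si}, and the $(\eta_2,\xi_2)$-rows require repeating that computation in the $r$-coordinates and invoking the change of variables $\alpha_{12}$ of Proposition \ref{jacobian u1 u2}. You instead differentiate the stationarity identities (\ref{51})--(\ref{51'}) along $X(s)$ to get $X^\prime(0)=-D_{XX}^{-1}D_{Xs}$, which turns the whole corollary into pure linear algebra on the Hessian already computed in Proposition \ref{Hessian}; note that $B^{t}DB=D_{ss}-D_{sX}D_{XX}^{-1}D_{Xs}$ precisely when the top block of $B$ equals $-D_{XX}^{-1}D_{Xs}$, and one can check that this implicit derivative reproduces, entry by entry, the rows of $B$ that the paper computes by hand (so your method also serves as a consistency check on Proposition \ref{jacobian u1 u2}, on which the $x_2,t_2$ columns of $D_{Xs}$ already depend). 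Your justification of the vanishing first-order terms is in fact stronger than needed: since (\ref{51}) and (\ref{51'}) hold identically in $k$ (not merely at the critical point), the terms proportional to $\nabla_X\tilde{\phi}$ vanish along the whole path $s\mapsto(X(s),s)$, which is also what makes the envelope identity $\partial_s\tilde{\psi}=\partial_s\tilde{\phi}(X(s),s)$ legitimate; the only hypotheses you use beyond Proposition \ref{Hessian} are the smoothness of $\eta_i,\xi_i$ on $\cP\backslash\cS$ and the invertibility of $D_{XX}$, both of which are available (the latter is the uniform nondegeneracy also used in Lemma \ref{integral product lemma}). What the paper's longer computation buys in exchange is the explicit matrix $B$ itself, i.e.\ the actual derivatives of $\eta_2,\xi_2$, which are of independent use in its bookkeeping; your argument gets the Hessian of $\tilde{\psi}$ with less work but does not produce those derivatives separately.
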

    
    \begin{proof}
       We define, as in Proposition \ref{derivatives of eta xi wrt si},
        \begin{align*}
            k(s_1,s_2,s_3) = u^{-1}\left(\exp(s_1X_1+s_2X_2+s_3X_3)u(k^\prime)\right),
        \end{align*}
        and define
        \begin{align}\label{definition of eta xi s1s2s3}
        \begin{split}
            \eta_1(s_1,s_2,s_3) = \eta_1\left( k(s_1,s_2,s_3)\right) ,\quad&\quad\quad\xi_1(s_1,s_2,s_3) = \xi_1\left( k(s_1,s_2,s_3) \right),\\
            \eta_2(s_1,s_2,s_3) = \eta_2\left(k(s_1,s_2,s_3),g^\prime\right), \quad&\quad\quad \xi_2(s_1,s_2,s_3) = \xi_2\left(k(s_1,s_2,s_3),g^\prime\right) .
            \end{split}
        \end{align}
         Notice that if we let $\tilde{\phi}$ be as in (\ref{phi tilde}), then
        \begin{align*}
            \tilde{\psi}(s_1,s_2,s_3)  = \tilde{\phi}(\eta_1,\xi_1,\eta_2,\xi_2,s_1,s_2,s_3),
        \end{align*}
        with $\eta_i = \eta_i(s_1,s_2,s_3)$ and $\xi_i = \xi_i(s_1,s_2,s_3)$ ($i=1,2$).
        Then
        \begin{align*}
            (x_1^\prime,t_1^\prime,x_2^\prime,t_2^\prime,0,0,0) = (\eta_1(k^\prime),\xi_1(k^\prime),\eta_2(k^\prime),\xi_2(k^\prime),0,0,0)
        \end{align*}
        is a critical point of $\tilde{\phi}$.
         Suppose that $D$ is the Hessian of  $\tilde{\phi}$ at the critical point $(x_1^\prime,t_1^\prime,x_2^\prime,t_2^\prime,0,0,0)$ which is calculated in Proposition \ref{Hessian}.
        If we apply the chain rule to $\tilde{\psi}$ with the functions $\eta_i,\xi_i$ in (\ref{definition of eta xi s1s2s3}), we obtain
        \begin{align*}
            D^\prime = B^tD B
        \end{align*}
        where
        \begin{align*}
            B=\begin{pmatrix}
                   \partial \eta_1/\partial s_1& \partial \eta_1/\partial s_2& \partial \eta_1/\partial s_3\\
                   \partial \xi_1/\partial s_1& \partial \xi_1/\partial s_2& \partial \xi_1/\partial s_3\\
                   \partial \eta_2/\partial s_1& \partial \eta_2/\partial s_2& \partial \eta_2/\partial s_3\\
                   \partial \xi_2/\partial s_1& \partial \xi_2/\partial s_2& \partial \xi_2/\partial s_3\\
                   1&0&0\\
                   0&1&0\\
                   0&0&1
            \end{pmatrix}(0,0,0).
        \end{align*}
        Hence, it suffices to calculate $B$.
        We have already calculated $\partial \eta_1/\partial s_i(0,0,0)$ and $\partial \xi_1/\partial s_i(0,0,0)$ in Proposition \ref{derivatives of eta xi wrt si}.
    
        Recall that we have a change of coordinates $(r_1,r_2,r_3) = \alpha_{12}(s_1,s_2,s_3)$ satisfying (\ref{commutative diagram}).  As in (\ref{definition of eta xi s1s2s3}), we may denote by $k(r_1, r_2, r_3), \eta_2 (r_1,r_2,r_3),\xi_2 (r_1,r_2,r_3)$ the corresponding functions in the $(r_1, r_2, r_3)$ variables.  Then, as in (\ref{change of variable for k}), we have
        \begin{align*}
             k(r_1,r_2,r_3)g^\prime n(x_2^\prime)a(t_2^\prime) \in NA \exp(r_1X_1+r_2X_2+r_3X_3) u_2^\prime,
        \end{align*}
        where $u_2^\prime\in K_0$ is defined in (\ref{u1 u2}), i.e.,
        \begin{align*}
            k^\prime g^\prime n(x_2^\prime)a(t_2^\prime) \in NA u_2^\prime.
        \end{align*}
        Following from \eqref{definition of ujprime}, we have
        \begin{align*}
            u_2^\prime = \begin{pmatrix}e^{i\theta_2}&0\\0&e^{-i\theta_2}\end{pmatrix}\begin{pmatrix}1/\sqrt{2}& \epsilon_2 i/\sqrt{2}\\ \epsilon_2i/\sqrt{2}&1/\sqrt{2}\end{pmatrix}.
        \end{align*}
        Following from (\ref{50'}), we have
        \begin{align*}
            k(r_1,r_2,r_3)g^\prime n\left( \eta_2(r_1,r_2,r_3) \right) a\left(\xi_2(r_1,r_2,r_3)\right) \in NA\begin{pmatrix}e^{i\tau_2( k(r_1,r_2,r_3),g^\prime)}&0\\0&e^{-i\tau_2( k(r_1,r_2,r_3),g^\prime)}\end{pmatrix}\begin{pmatrix}1/\sqrt{2}& \epsilon_2i/\sqrt{2}\\\epsilon_2i/\sqrt{2}&1/\sqrt{2}\end{pmatrix}.
        \end{align*}
        Here we use the fact $\sigma_2(k(r_1,r_2,r_3),g^\prime) = \epsilon_2$ because $\sigma_2(k(0,0,0),g^\prime)=\sigma_2(k^\prime,g^\prime) = \epsilon_2$.
         Therefore,
         \begin{align*}
            &\exp(r_1X_1+r_2X_2+r_3X_3)\begin{pmatrix}e^{i\theta_2}&0\\0&e^{-i\theta_2}\end{pmatrix}\begin{pmatrix}1/\sqrt{2}& \epsilon_2i/\sqrt{2}\\\epsilon_2i/\sqrt{2}&1/\sqrt{2}\end{pmatrix}n\left(e^{-t_2^\prime}(\eta_2(r_1,r_2,r_3)-x_2^\prime)\right)a\left(\xi_2(r_1,r_2,r_3)-t_2^\prime\right)\\
            =&\exp(r_1X_1+r_2X_2+r_3X_3) u_2^\prime n\left(e^{-t_2^\prime}(\eta_2(r_1,r_2,r_3)-x_2^\prime)\right)a(-t_2^\prime)a\left(\xi_2(r_1,r_2,r_3)\right)\\
            =&\exp(r_1X_1+r_2X_2+r_3X_3) u_2^\prime a(-t_2^\prime) n(-x_2^\prime)n\left(\eta_2(r_1,r_2,r_3)\right)a\left(\xi_2(r_1,r_2,r_3)\right)\\
            =& \exp(r_1X_1+r_2X_2+r_3X_3) u_2^\prime\cdot\left(k(r_1,r_2,r_3)g^\prime n(x_2^\prime) a(t_2^\prime)\right)^{-1}\cdot\left(k(r_1,r_2,r_3)g^\prime n\left(\eta_2(r_1,r_2,r_3)\right)a\left(\xi_2(r_1,r_2,r_3)\right) \right)\\
            \in&  NA\begin{pmatrix}e^{i\tau_2( k(r_1,r_2,r_3),g^\prime)}&0\\0&e^{-i\tau_2( k(r_1,r_2,r_3),g^\prime)}\end{pmatrix}\begin{pmatrix}1/\sqrt{2}& \epsilon_2i/\sqrt{2}\\\epsilon_2i/\sqrt{2}&1/\sqrt{2}\end{pmatrix}.
        \end{align*}
         The rest of the calculations for $\partial\eta_2/\partial r_i$ and $\partial\xi_2/\partial r_i$ are identical to those of $\partial\eta_1/\partial s_i$ and $\partial\xi_1/\partial s_i$ in Proposition \ref{derivatives of eta xi wrt si}. After applying Proposition \ref{jacobian u1 u2}, we obtain
        \begin{align*}
            B = \begin{pmatrix}
                   2e^{t_1^\prime} \sin(2\theta_1)&-2e^{t_1^\prime} \cos(2\theta_1)&0\\
                   -2\epsilon_1\cos(2\theta_1)&-2\epsilon_1\sin(2\theta_1)&0\\
                    2e^{h+t_2^\prime} \sin(2\theta_2)&-2e^{h+t_2^\prime} \cos(2\theta_2)&0\\
                   -2\epsilon_2e^h\cos(2\theta_2)&-2\epsilon_2e^h\sin(2\theta_2)&0\\
                   1&0&0\\
                   0&1&0\\
                   0&0&1
            \end{pmatrix}.
        \end{align*}
    \end{proof}
    
    \begin{rmk}
        We separate two calculations of $\partial \eta_1/\partial s_i$ and $\partial \xi_1/\partial s_i$, and $\partial \eta_2/\partial s_i$ and $\partial \xi_2/\partial s_i$ in different places (Proposition \ref{derivatives of eta xi wrt si} and Corollary \ref{Hessian of psi}). This is because $\partial \eta_1/\partial s_i$ and $\partial \xi_1/\partial s_i$ can be calculated without assuming $(k^\prime,g^\prime)$ to be critical, but the calculations for $\partial \eta_2/\partial s_i$ and $\partial \xi_2/\partial s_i$ use Proposition \ref{jacobian u1 u2}, which is based on the critical assumption.
    \end{rmk}
    
    \begin{rmk}
        The result in Corollary \ref{Hessian of psi} helps motivate Proposition \ref{eliminating degeneracy}, but it is not needed in our proof of Proposition \ref{bound for J}.
    \end{rmk}
    

\subsection{Degenerate estimate}\label{sec degenerate int est}
    We define
    \begin{align}\label{eq: sl2 perp}
        \fsl(2,\BR)^\perp = \left\{ \begin{pmatrix}
               iY_1&iY_2\\iY_3&-iY_1
        \end{pmatrix}\in  \fsl(2,\BC) : Y_i\in\BR\right\},
    \end{align}
    which is the subspace perpendicular to the Lie algebra of $H^\prime$ in $\fsl(2,\BC)$.
    Corollary \ref{Hessian of psi} shows that the degeneracy of the critical points of $\psi$ happens when $g$ is near $H^\prime$, which is the same as $\phi$.
    We also note that $\psi(k,g) = \psi(k,gg_0)$ for $g_0\in H^\prime$.
    Hence, we may study the degeneracy behavior of $\psi$ near $H^\prime$ by taking directional derivatives of $\psi(k,\exp(tY))$ for $Y\in \fsl(2,\BR)^\perp$ at $t=0$. We define
    \begin{align*}
        \cP_0 = \{ k\in K_0\, | \, (k,e) \notin\cS\}.
    \end{align*}
    Notice that if $k\in\cP_0$ and $Y\in\fsl(2,\BC)$, then $(k,\exp(tY)) \notin\cS$ for $t\in\BR$ sufficiently close to 0.
    
    \begin{proposition}\label{eliminating degeneracy}
        Let $k\in \cP_0$.
        If $Y=\begin{pmatrix}
               iY_1&iY_2\\iY_3&-iY_1
        \end{pmatrix}\in  \fsl(2,\BR)^\perp$ with $Y_1,Y_2,Y_3\in\BR$, then
        \begin{align*}
            \left.\frac{\partial}{\partial t}\psi(k,\exp(tY))\right|_{t=0} = \sigma_1(k) \left(2\eta_1(k)e^{-\xi_1(k)}Y_1 +e^{-\xi_1(k)}Y_2 - (e^{\xi_1(k)}+\eta_1(k)^2e^{-\xi_1(k)})Y_3\right).
        \end{align*}
        Here $\sigma_1(k)\in\{\pm1\}$ is defined in (\ref{50}).
        
        If we fix a compact subset $B\subset\cP_0$, then for any $k\in B$, $\partial\psi/\partial t(k,\exp(tY))|_{t=0}$ has a Hessian in the directions transverse to the critical set (which is $\operatorname{U}(1)$-invariant if nonempty) with determinant $\asymp \| Y \|^2$, where the implied constants depending on the set $B$. Hence, $\partial\psi/\partial t(k,\exp(tY))|_{t=0}$ is Morse-Bott 
        unless $Y=0$ in this case.
    \end{proposition}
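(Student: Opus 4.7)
My plan is to split the proof into two main calculations: an envelope-type reduction that eliminates the implicit $g$-dependence through $\eta_2, \xi_2$, followed by a direct first-order computation of a single Iwasawa height; the Morse--Bott analysis is then a straightforward critical-point calculation in $(\eta_1,\xi_1)$-coordinates.

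First, note that $\psi(k,g) = \phi(\eta_1(k),\xi_1(k),\eta_2(k,g),\xi_2(k,g),k,g)$, and that (\ref{51}), (\ref{51'}) say precisely that $\phi$ is stationary in $(x_1,t_1)$ at $(\eta_1,\xi_1)$ and in $(x_2,t_2)$ at $(\eta_2,\xi_2)$. Since $\eta_1,\xi_1$ do not depend on $g$, the chain rule collapses the $g$-derivative of $\psi$ to the explicit $g$-dependence of $\phi$. At $g=e$ both optimizers coincide, $\eta_2(k,e) = \eta_1(k)$ and $\xi_2(k,e) = \xi_1(k)$, as both characterize the highest point of $k\mathbb{H}^2$, so, writing $\alpha_1 = n(\eta_1(k))a(\xi_1(k))$,
\[
\frac{\partial}{\partial t}\psi(k,\exp(tY))\Big|_{t=0} \;=\; \frac{d}{dt}A(k\exp(tY)\alpha_1)\Big|_{t=0}.
\]

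Next I compute this derivative using the formula $A(g) = -\log(|c|^2+|d|^2)$ for $g=\begin{pmatrix}a&b\\c&d\end{pmatrix}\in\mathrm{SL}(2,\mathbb{C})$, which follows from Lemma \ref{Iwasawa k} together with (\ref{Poincare}). Writing $k=\begin{pmatrix}\alpha&\beta\\-\bar\beta&\bar\alpha\end{pmatrix}$ and expanding the bottom row of $k\exp(tY)\alpha_1$ to first order in $t$ gives
\[
\frac{d}{dt}A(k\exp(tY)\alpha_1)\Big|_{t=0} = 2e^{A(k\alpha_1)}\mathrm{Im}(\bar\alpha\beta)\, e^{-\xi_1}\bigl[2\eta_1 Y_1 + Y_2 - (e^{2\xi_1}+\eta_1^2)Y_3\bigr].
\]
To identify the prefactor I compare the bottom row of the Iwasawa decomposition $k\alpha_1 = n_1\,a(A(k\alpha_1))\,u_1^\prime(k)$ with the explicit form (\ref{50}) of $u_1^\prime(k)$; this determines $\alpha,\beta$ in terms of $\xi_1$, $\tau_1(k)$, $\sigma_1(k)$ and $A(k\alpha_1)$, and a short manipulation yields $2e^{A(k\alpha_1)}\mathrm{Im}(\bar\alpha\beta) = \sigma_1(k)$, giving the stated formula. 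This sign-tracking step is the main technical obstacle; the underlying computation is elementary but the bookkeeping of complex conjugates is essential.

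For the Morse--Bott claim, Proposition \ref{derivatives of eta xi wrt si} provides a $\mathrm{U}(1)$-invariant local diffeomorphism $k\mapsto(\eta_1(k),\xi_1(k))$ whose Jacobian is uniformly bounded above and away from zero on the compact set $B\subset\cP_0$. Since $\sigma_1$ is locally constant, it suffices to analyze
\[
\tilde F(\eta_1,\xi_1) = e^{-\xi_1}\bigl[2\eta_1 Y_1 + Y_2 - (e^{2\xi_1}+\eta_1^2)Y_3\bigr].
\]
The critical equations reduce to $Y_1 = \eta_1 Y_3$ and $Y_2 = -(e^{2\xi_1}+\eta_1^2)Y_3$; for $Y\ne 0$ these force $Y_3\ne 0$ and determine the critical point uniquely whenever it lies in $\mathbb{H}^2$. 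A short calculation of second partials then gives the Hessian $\mathrm{diag}(-2Y_3e^{-\xi_1},-2Y_3e^{\xi_1})$ of determinant $4Y_3^2$. Finally, substituting the critical relations yields $\|Y\|^2 = Y_3^2\bigl(1+\eta_1^2+(e^{2\xi_1}+\eta_1^2)^2\bigr)$, a quantity uniformly comparable to $Y_3^2$ on $B$, so $\det\mathrm{Hess}(\tilde F)\sim\|Y\|^2$; pulling back through the bounded-Jacobian local diffeomorphism preserves this up to $B$-dependent constants, establishing the Morse--Bott property.
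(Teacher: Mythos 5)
Your proposal is correct and follows the same overall structure as the paper's proof: the envelope/chain-rule reduction using (\ref{51'}) and $\eta_2(k,e)=\eta_1(k)$, $\xi_2(k,e)=\xi_1(k)$ to collapse $\partial_t\psi(k,\exp(tY))|_{t=0}$ to $\partial_t A\left(k\exp(tY)n(\eta_1(k))a(\xi_1(k))\right)|_{t=0}$, and then the identical Morse--Bott analysis (reduction through the local diffeomorphism $k\mapsto(\eta_1(k),\xi_1(k))$ of Proposition \ref{derivatives of eta xi wrt si}, critical equations $Y_1=\eta_1 Y_3$, $Y_2=-(e^{2\xi_1}+\eta_1^2)Y_3$, Hessian determinant $4Y_3^2\sim_B\|Y\|^2$). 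The one place you diverge is the computation of the height derivative: the paper writes the first-order Iwasawa decomposition of $k\exp(tY)n(\eta_1)a(\xi_1)$ and reads off $X_A$ as the $\fa$-projection of $\Ad\left(u\,a(-\xi_1)n(-\eta_1)\right)Y$, whereas you differentiate the explicit formula $A(g)=-\log(|c|^2+|d|^2)$ along the bottom row and then pin down the prefactor by matching that row against (\ref{50}), obtaining $2e^{A(k\alpha_1)}\operatorname{Im}(\bar\alpha\beta)=\sigma_1(k)$. I checked this identity and your intermediate derivative formula; both are correct, so your more hands-on matrix computation is a legitimate substitute for the paper's Lie-algebra argument, trading the structural $\Ad$-projection for explicit bookkeeping of conjugates, which you flag but should write out in a final version.
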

    
    \begin{proof}
            We have
            \begin{align*}
                \left.\frac{\partial}{\partial t}\psi(k,\exp(tY))\right|_{t=0} =&\left.\frac{\partial}{\partial t}\phi\left(\eta_1(k),\xi_1(k),\eta_2(k,\exp(tY)),\xi_2(k,\exp(tY)),k,\exp(tY)\right)\right|_{t=0}\\
                =&\left.\frac{\partial}{\partial x_2}\phi\left(\eta_1(k),\xi_1(k),\eta_2(k,e),\xi_2(k,e),k,e\right)\cdot\frac{\partial}{\partial t}\eta_2(k,\exp(tY))\right|_{t=0}\\
                &+\left.\frac{\partial}{\partial t_2}\phi\left(\eta_1(k),\xi_1(k),\eta_2(k,e),\xi_2(k,e),k,e\right)\cdot\frac{\partial}{\partial t}\xi_2(k,\exp(tY))\right|_{t=0}\\
                &+\left.\frac{\partial}{\partial t}\phi\left(\eta_1(k),\xi_1(k),\eta_2(k,e),\xi_2(k,e),k,\exp(tY)\right)\right|_{t=0}.
            \end{align*}
            The first two terms vanish by (\ref{51'}), and $\eta_2(k,e)=\eta_1(k)$ and $\xi_2(k,e)=\xi_1(k)$, so we  are left with
            \begin{align*}
                \left.\frac{\partial}{\partial t}\psi(k,\exp(tY))\right|_{t=0} =&\left.\frac{\partial}{\partial t}\phi\left(\eta_1(k),\xi_1(k),\eta_1(k),\xi_1(k),k,\exp(tY)\right)\right|_{t=0}\\
                =& \left.\frac{\partial}{\partial t}A\left(k\exp(tY)n(\eta_1(k))a(\xi_1(k))\right)\right|_{t=0}.
            \end{align*}
            Write the first order approximation to the Iwasawa decomposition of $k\exp(tY)n(\eta_1(k))a(\xi_1(k))$ as
            \begin{align*}
                k\exp(tY)n(\eta_1(k))a(\xi_1(k)) = n\exp\left(tX_N+O(t^2)\right) a\exp\left(tX_A+O(t^2)\right) u\exp\left(tX_K+O(t^2)\right) ,
            \end{align*}
            where $n\in N$, $a\in A$, $u\in K_0$, $X_N\in\fn$, $X_A\in\fa$, and $X_K\in\fk$. As in (\ref{50}), we have
        \begin{align*}
               u= \begin{pmatrix}e^{i\tau_1(k)}&0\\0&e^{-i\tau_1(k)}\end{pmatrix}\begin{pmatrix}1/\sqrt{2}& \sigma_1(k) i/\sqrt{2}\\\sigma_1(k) i/\sqrt{2}&1/\sqrt{2}\end{pmatrix},
            \end{align*}
        where $\tau_1(k)\in\BR/2\pi\BZ$ and $\sigma_1(k)\in\{\pm 1\}$. 
            Rearranging and equating first-order terms gives
            \begin{align*}
               & Y = \Ad (n(\eta_1(k))a(\xi_1(k))u^{-1}a^{-1})X_N+ \Ad (n(\eta_1(k))a(\xi_1(k))u^{-1})X_A + \Ad (n(\eta_1(k))a(\xi_1(k)))X_K,\\
               &\Ad(ua(-\xi_1(k))n(-\eta_1(k))) Y=\Ad (a^{-1})X_N+ X_A + \Ad (u)X_K.
            \end{align*}
            As the right-hand side is the Iwasawa decomposition of the Lie algebra $\fsl(2,\BC) =\fn+\fa+\fk $, we see that $X_A$ is the projection of $\Ad(ua(-\xi_1(k))n(-\eta_1(k))) Y$ to $\fa$ under the Iwasawa decomposition. A calculation shows that
            \begin{align*}
                X_A = \sigma_1(k)\left(2\eta_1(k)e^{-\xi_1(k)}Y_1 + e^{-\xi_1(k)}Y_2 - (e^{\xi_1(k)}+\eta_1(k)^2e^{-\xi_1(k)})Y_3 \right)\begin{pmatrix}
                       1/2&0\\0&-1/2
                \end{pmatrix},
            \end{align*}
            so that
            \begin{align*}
                \left.\frac{\partial}{\partial t}A\left(k\exp(tY)n(\eta_1(k))a(\xi_1(k))\right)\right|_{t=0}= \sigma_1(k)\left(2\eta_1(k)e^{-\xi_1(k)}Y_1 + e^{-\xi_1(k)}Y_2 - (e^{\xi_1(k)}+\eta_1(k)^2e^{-\xi_1(k)})Y_3 \right).
            \end{align*}
            
            We now prove that $\partial\psi/\partial t(k,\exp(tY))|_{t=0}$ is Morse-Bott if $Y \neq 0$ under the assumption that $k$ lies in a compact set $B\subset\cP_0$. 
            By Proposition \ref{derivatives of eta xi wrt si}, $k\mapsto (\eta_1(k),\xi_1(k))$ from $\mathrm{U}(1)\backslash K_0$ to $\BR^2$ is a local diffeomorphism.
            Hence, the problem reduces to showing that
            \begin{align*}
                f(x,y) = 2xe^{-y}Y_1 + e^{-y}Y_2 - (e^{y}+x^2e^{-y})Y_3\quad\quad \text{ for }(x,y)\in (\eta_1,\xi_1) (B)
            \end{align*}
            only has nondegenerate critical points if $Y\neq 0$. Suppose there is a critical point $(x^\prime,y^\prime)$ of $f$, i.e.,
            \begin{align*}
                \frac{\partial f}{\partial x} (x^\prime,y^\prime) &= \frac{\partial f}{\partial y} (x^\prime,y^\prime) =  0,\\
                2e^{-y^\prime}Y_1 - 2x^\prime e^{-y^\prime} Y_3 &= -2x^\prime e^{-y^\prime}Y_1 - e^{-y^\prime}Y_2 - (e^{y^\prime}-x^{\prime 2}e^{-y^\prime})Y_3 = 0,
            \end{align*}
            so $Y_1 = x^\prime Y_3$ and $ Y_2 =- (e^{2y^\prime}+x^{\prime 2})Y_3 $. We may assume $Y_3 \neq 0$ because otherwise $Y_3=0$ would imply both $Y_1$ and $Y_2$ are $0$ and then $Y=0$. Moreover, for $x^\prime,y^\prime\in  (\eta_1,\xi_1) (B)$,
            \begin{align*}
                \| Y \|^2 = Y_1^2 + Y_2^2 +Y_3^2 = \left( 1 + x^{\prime2} +(e^{2y^\prime}+x^{\prime2})^2 \right) Y_3^2 \asymp_B Y_3^2.
            \end{align*}
            Then
            \begin{align*}
            \det\begin{pmatrix}
                        {\partial^2 f}/{\partial x^2} & {\partial^2f}/{\partial x \partial y}\\{\partial^2 f}/{\partial x \partial y}& {\partial^2 f}/{\partial y^2}
                \end{pmatrix} (x^\prime,y^\prime)  = 4 Y_3^2 \asymp \| Y \|^2.
           \end{align*}
    \end{proof}

    \begin{lemma}\label{lemma of estimate in Lie algebra}
        Given a compact set $B\subset \cP\backslash\cS$,
        there is an open neighborhood $0\in U \subset \fsl(2,\BR)^\perp$, such that for all $b\in C_c^\infty(\cP\backslash\cS)$ with $\supp(b)\subset B$, $Y\in U$, we have
        \begin{align*}
            \int_{K_0} b(k,\exp(Y)) \exp\left({is{\psi}(k,\exp(Y))}\right)dk\ll(1+s\|Y\|)^{-1},
        \end{align*}
         where the implied constant depends on $B$, the $L^\infty$-norms of $b$ and  finitely many of  its derivatives. 
    \end{lemma}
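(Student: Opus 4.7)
The plan is to split the analysis according to the size of $T:=s\|Y\|$. When $T\le 1$, the bound $O((1+T)^{-1})$ follows trivially, since the integrand is supported in the projection $\pi_K(B)\subset K_0$ (compact) and uniformly bounded in $Y\in U$. The content lies in the regime $T>1$, where one needs to extract a saving of $T^{-1}$ via stationary phase.

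The key observation is that $\psi(k,e)=0$ for all $k$, because $\eta_2(k,e)=\eta_1(k)$ and $\xi_2(k,e)=\xi_1(k)$ and the two terms in $\phi$ then cancel. Introduce polar coordinates $Y=rY_0$ with $r=\|Y\|$ and $Y_0$ in the unit sphere $\Sigma\subset\fsl(2,\BR)^\perp$. Applying Taylor's theorem with integral remainder in $r$ to the smooth map $r\mapsto\psi(k,\exp(rY_0))$ (well-defined for $(k,\exp(rY_0))\in\cP\setminus\cS$), I would write
\begin{align*}
\psi(k,\exp(rY_0))=r\,\tilde\Psi(k,r,Y_0),
\end{align*}
where $\tilde\Psi$ is jointly smooth on $\pi_K(B)\times[0,r_0)\times\Sigma$ for some $r_0>0$, and $\tilde\Psi(k,0,Y_0)=F_{Y_0}(k):=\partial_t\psi(k,\exp(tY_0))|_{t=0}$ as in Proposition~\ref{eliminating degeneracy}. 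The integral then takes the form $\int_{K_0}b(k,\exp(rY_0))\,e^{iT\tilde\Psi(k,r,Y_0)}dk$, an oscillatory integral on $K_0$ with large parameter $T=sr$.

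The next step is to verify that $\tilde\Psi(\cdot,r,Y_0)$ is Morse-Bott on $\pi_K(B)$ uniformly in $(r,Y_0)\in[0,r_0)\times\Sigma$, after possibly shrinking $r_0$. At $r=0$ this is exactly Proposition~\ref{eliminating degeneracy}: the critical set of $F_{Y_0}$ is a finite union of $\mathrm{U}(1)$-orbits, and its transversal Hessian has determinant $\sim\|Y_0\|^2=1$, uniformly for $Y_0\in\Sigma$ by compactness. Moreover, $A$ is invariant under left multiplication by $\mathrm{U}(1)\subset K_0$ (as $\mathrm{U}(1)$ normalizes $NA$), and $\eta_i,\xi_i$ are $\mathrm{U}(1)$-invariant (the north pole of $uk\BH^2$ is $u$ applied to the north pole of $k\BH^2$), so $\psi(uk,g)=\psi(k,g)$ and hence $\tilde\Psi(\cdot,r,Y_0)$ is $\mathrm{U}(1)$-invariant. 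Applying the implicit function theorem to the $\mathrm{U}(1)$-equivariant gradient then shows that for small $r$ the critical set of $\tilde\Psi(\cdot,r,Y_0)$ persists as a finite union of $\mathrm{U}(1)$-orbits with transversal Hessian determinant bounded below uniformly.

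Finally, I would use a $\mathrm{U}(1)$-invariant partition of unity to localize: near each critical orbit, Proposition~\ref{stationary phase} with $d=3$, $e=1$ gives a contribution of size $T^{-(d-e)/2}=T^{-1}$; on the complement of a small neighborhood of the critical set the gradient of $\tilde\Psi$ is bounded below, and iterated integration by parts produces an arbitrary power saving. Combining with the trivial estimate in the regime $T\le 1$ yields the stated bound. The main obstacle is ensuring that Proposition~\ref{stationary phase} applies with constants that are \emph{uniform} in $(r,Y_0)\in[0,r_0)\times\Sigma$, rather than for a fixed phase: this requires the standard refinement of stationary phase allowing smooth dependence of the phase and amplitude on parameters, together with the uniform lower bound on the transversal Hessian above. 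Once that is in place, the single neighborhood $U=\{Y:\|Y\|<r_0\}$ works uniformly in $s$ and $Y$.
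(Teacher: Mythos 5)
Your proposal is correct and follows essentially the same route as the paper: pass to polar coordinates $Y=rY_0$, use that $\psi$ vanishes on $H^\prime$ to factor the phase as $r$ times a smooth function whose $r=0$ limit is the derivative computed in Proposition \ref{eliminating degeneracy}, and then apply Morse--Bott stationary phase (Proposition \ref{stationary phase}) with effective parameter $s\|Y\|$, treating $s\|Y\|\le 1$ trivially. You simply supply more detail than the paper's terse argument (the $\operatorname{U}(1)$-equivariant implicit-function-theorem persistence of the critical orbits for small $r$, the nonstationary-phase estimate away from them, and the parameter-uniform form of stationary phase), all of which are consistent with what the paper implicitly uses.
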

    \begin{proof}
        Define polar coordinates $X:\BR \times \BR/2\pi\BZ \times  \BR/2\pi\BZ \to \fsl(2,\BR)^\perp$ by
        \begin{align*}
            X(r,\gamma,\delta) = \begin{pmatrix}
               ir\cos(\gamma)&ir\sin(\gamma)\cos(\delta)\\ir\sin(\gamma)\sin(\delta)&-ir\cos(\gamma) 
        \end{pmatrix} = r\begin{pmatrix}
               i\cos(\gamma)&i\sin(\gamma)\cos(\delta)\\i\sin(\gamma)\sin(\delta)&-i\cos(\gamma) 
        \end{pmatrix}.
        \end{align*}
        We define
        \begin{align*}
            &\tilde{\cP}=K_0\times\BR \times \BR/2\pi\BZ \times  \BR/2\pi\BZ, \\
            &\tilde{\cS}=\left\{ (k,r,\gamma,\delta)\in \tilde{\cP}  : (k,\exp(X(r,\gamma,\delta))) \in \cS\right\},\\
            &\tilde{B} = \left\{ (k,r,\gamma,\delta)\in \tilde{\cP}  : (k,\exp(X(r,\gamma,\delta)))\in B\right\} \subset \tilde{\cP}\backslash \tilde{\cS}.
        \end{align*}
        The set $\tilde{B}$ is still compact.
        We define $\tilde{b}(k,r,\gamma,\delta)\in C^\infty_c(\tilde{\cP}\backslash\tilde{\cS})$ and $\tilde{\psi}(k,r,\gamma,\delta)\in C^\infty(\tilde{\cP}\backslash\tilde{\cS})$ to be the pullbacks of $b$ and $\psi$ under $X$, and then $\supp(b)\subset \tilde{B}$. We know that $\psi(k,g)$ vanishes when $g\in H^\prime$ so $\tilde{\psi}$ vanishes when $r=0$. Since $\tilde{\psi}$ is smooth (in fact, real analytic), we have that $\tilde{\psi}/r$ extends to a smooth function on $\tilde{\cP} \backslash \tilde{\cS}$.
        Proposition \ref{eliminating degeneracy} implies that $\tilde{\psi}/r$ is a Morse-Bott function with respect to the variable $k$ as $k$ lies in a compact set in $\cP_0$ when $r=0$.
        Hence, there exists some $\varepsilon>0$ such that $\tilde{\psi}/r$ is also Morse-Bott on the set $\tilde{B} \cap (K_0\times (-\varepsilon,\varepsilon) \times \BR/2\pi\BZ \times  \BR/2\pi\BZ)$.
        We define $U = X((-\varepsilon,\varepsilon)\times \BR/2\pi\BZ \times  \BR/2\pi\BZ)$.
        If $Y=X(r,\gamma,\delta)\in U\subset\fsl(2,\BR)^\perp$, then
        \begin{align}\label{eq: int over K, MB}
             \int_{K_0} b(k,\exp(Y)) \exp\left({is{\psi}(k,\exp(Y))}\right)dk=\int_{K_0}\tilde{b}(k,r,\gamma,\delta)\exp\left(  isr \left(\tilde{\psi}(k,r,\gamma,\delta)/r\right)\right) dk
        \end{align}
        with $\tilde{\psi}/r$ Morse-Bott. By applying Propositions \ref{eliminating degeneracy} and \ref{stationary phase}, if $|sr|\geq 1$, the right-hand side of \eqref{eq: int over K, MB} is $\ll |sr|^{-1}\asymp (s\|Y\|)^{-1}$. The result now follows by combining the above bound and the trivial bound.
    \end{proof}
    
    \begin{proposition}\label{estimate of oscillatory}
        Given a smooth compactly supported function $b\in C_c^\infty(\cP\backslash\cS)$ and $g_0\in H^\prime$, there is an open neighborhood $g_0\in U \subset \SL(2,\BC)$ such that for all  $g\in U$, we have
        \begin{align*}
            \int_{K_0} b(k,g)\exp\left({is{\psi}(k,g)}\right)dk\ll \left(1+s\,d(g,H^\prime)\right)^{-1}.
        \end{align*}
         The implied constant depends only on $g_0$,  the support of $b$,  and the $L^\infty$-norms of $b$ and finitely many of its derivatives.
    \end{proposition}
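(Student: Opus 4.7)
The plan is to reduce the general case to the setting $g_0=e$ already handled by Lemma \ref{lemma of estimate in Lie algebra}, via a right $H^\prime$-invariance of the phase $\psi$. First I would verify that for any $g'\in G_0$ and $h'\in H^\prime$ one has $\psi(k,g'h')=\psi(k,g')$. The function $\psi$ depends on $g'$ only through the hemisphere $kg'\BH^2$ (via the optimal values $\eta_2(k,g'), \xi_2(k,g')$ locating its north pole), and right multiplication by $h'\in H^\prime$ leaves this hemisphere unchanged since $h'$ stabilizes $\BH^2$; the two choices of $NAK_0$-representative for the new and old optimal variables both map $(0,1)\in\BH^3$ to the same north pole and therefore differ by a $K_0$-factor, giving equal $A$-component.

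Next I would set up local coordinates near $g_0$. Since $\Ad(h')$ preserves $\fsl(2,\BR)^\perp$ for every $h'\in H^\prime$ (a direct check on each of the two connected components of $H^\prime$, using $\fsl(2,\BR)^\perp=i\,\fsl(2,\BR)$), the smooth map
\begin{align*}
    \fsl(2,\BR)^\perp \times H^\prime \longrightarrow G_0, \qquad (Y,h)\longmapsto \exp(Y)\,h
\end{align*}
has bijective differential at $(0,g_0)$: the image at that point consists of the vectors $g_0(\Ad(g_0^{-1})Y+X)$ with $X\in\fsl(2,\BR)$, and the splitting $\fsl(2,\BC)=\fsl(2,\BR)\oplus\fsl(2,\BR)^\perp$ together with $\Ad(g_0^{-1})$ preserving $\fsl(2,\BR)^\perp$ shows this map is invertible. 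Hence it restricts to a diffeomorphism onto some open neighborhood $U$ of $g_0$, and each $g\in U$ has a unique representation $g=\exp(Y)h$ with $Y\in\fsl(2,\BR)^\perp$ small, $h\in H^\prime$ close to $g_0$, and $\|Y\|\sim d(g,H^\prime)$ uniformly on $U$.

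Applying the invariance then yields $\psi(k,g)=\psi(k,\exp(Y)h)=\psi(k,\exp(Y))$. Setting $\tilde{b}_h(k,Z)=b(k,Zh)$, the function $\tilde{b}_h$ is smooth and compactly supported in $\cP\backslash\cS$ (since the hemisphere $kZ\BH^2$ is vertical iff $kZh\BH^2$ is), and
\begin{align*}
    \int_{K_0} b(k,g)\exp\left(is\psi(k,g)\right)dk = \int_{K_0}\tilde{b}_h(k,\exp(Y))\exp\left(is\psi(k,\exp(Y))\right)dk.
\end{align*}
After shrinking $U$ so that, as $h$ varies in the corresponding compact subset of $H^\prime$, all the supports $\supp(\tilde{b}_h)$ lie in a single compact $B\subset\cP\backslash\cS$ and the $L^\infty$-norms of $\tilde{b}_h$ and of finitely many of its derivatives are uniformly bounded, Lemma \ref{lemma of estimate in Lie algebra} applied to this uniform family gives the desired bound $\ll(1+s\|Y\|)^{-1}\ll(1+s\,d(g,H^\prime))^{-1}$.

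The main obstacle is essentially the bookkeeping: extracting the right $H^\prime$-invariance cleanly from the somewhat indirect definition of $\psi$ through $\phi$ and the optimal parameters $\eta_2,\xi_2$, and then arranging the transverse decomposition and compactness estimates so that the implied constants in the eventual appeal to Lemma \ref{lemma of estimate in Lie algebra} remain uniform as $h$ ranges over a compact piece of $H^\prime$. Once these pieces are in place, the conclusion is immediate.
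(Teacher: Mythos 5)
Your proposal is correct and follows essentially the same route as the paper: decompose $g$ near $g_0$ as $\exp(Y)\cdot(\text{element of }H^\prime\text{ near }g_0)$ with $Y\in\fsl(2,\BR)^\perp$, use the right $H^\prime$-invariance $\psi(k,gh^\prime)=\psi(k,g)$ to reduce to $\psi(k,\exp(Y))$, absorb the $H^\prime$-factor into the cutoff with uniform control of supports and derivative norms, note $d(g,H^\prime)\sim\|Y\|$, and invoke Lemma \ref{lemma of estimate in Lie algebra}. The paper parametrizes the $H^\prime$-part as $\exp(X_0)g_0$ with $X_0\in\fsl(2,\BR)$ and asserts the invariance and the transversality without detail, so your write-up only differs in making those routine verifications explicit.
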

    
    \begin{proof}
        Let $U_0\subset \fsl(2,\BR)$ be an open bounded neighborhood of $0$ and let $B \subset \cP\backslash\cS$ be a compact set so that the supports of $b(\cdot,\cdot \exp(X_0) g_0) \in C_c^\infty(\cP\backslash\cS)$ are contained in $B$ for all $X_0\in U_0$. 
        We apply Lemma \ref{lemma of estimate in Lie algebra} to the set $B$ to get an open neighborhood $0\in U_Y\subset \fsl(2,\BR)^\perp$,
        and let $U=\exp(U_Y)\exp(U_0)g_0$. We assume $U_Y$ is small enough so that if $g=\exp(Y)\exp(X_0)g_0$ for $Y\in U_Y$ and $X_0 \in U_0$, we have $d(g,H^\prime)\asymp \|Y\|$, where the implied constant depends only on $g_0$. As ${\psi}(k,g) = \psi(k,\exp(Y)\exp(X_0)g_0) = {\psi}(k,\exp(Y))$, the result follows from Lemma \ref{lemma of estimate in Lie algebra}.
    \end{proof}

    \begin{corollary}\label{corollary estimate when g is close to H}
        Given a smooth compactly supported function $b\in C_c^\infty(\cP\backslash\cS)$, there is a constant $\delta_0 > 0$ such that for all  $g\in G_0$ satisfying $d(g,H^\prime) \leq \delta_0$ and $d(g,e)\leq 1$, we have
        \begin{align}\label{inequality of int K0}
            \int_{K_0} b(k,g)\exp\left({is{\psi}(k,g)}\right)dk\ll \left(1+s\,d(g,H^\prime)\right)^{-1}.
        \end{align}
         The implied constant depends only on the support of $b$,  and the $L^\infty$-norms of $b$ and finitely many of its derivatives.
    \end{corollary}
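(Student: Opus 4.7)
The plan is to bootstrap Proposition \ref{estimate of oscillatory}, which supplies the bound (\ref{inequality of int K0}) on an open neighborhood of any fixed $g_0 \in H^\prime$, to a uniform bound on a tubular neighborhood of $H^\prime$ intersected with $\{d(g,e)\leq 1\}$. The route is a standard open-cover-plus-compactness argument; the only point that needs attention is that $H^\prime$ itself is not compact, but the portion we need to cover is.

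First I would observe that, because we restrict to $d(g,e)\leq 1$ and $d(g,H^\prime)\leq\delta_0\leq 1$, the only points of $H^\prime$ that can be close to the prescribed set lie in the compact piece
\[
    K^\prime := H^\prime \cap \overline{B(e,2)} \subset G_0,
\]
which is compact since $d$ is a left-invariant Riemannian metric on $G_0$ (Hopf–Rinow gives compact closed balls). For each $g_0 \in K^\prime$ Proposition \ref{estimate of oscillatory} produces an open neighborhood $U_{g_0}\subset G_0$ on which (\ref{inequality of int K0}) holds with some implied constant $C_{g_0}$ depending on $g_0$, the support of $b$, and $L^\infty$-norms of $b$ and finitely many of its derivatives. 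Extract a finite subcover $U_{g_1},\ldots,U_{g_n}$ of $K^\prime$ and put $U = \bigcup_{i=1}^n U_{g_i}$.

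Next I would show that there is $\delta_0 > 0$ with
\[
    \{g\in G_0 : d(g,H^\prime)\leq \delta_0,\ d(g,e)\leq 1\} \subset U.
\]
If no such $\delta_0$ existed, there would be a sequence $g_m$ with $d(g_m,H^\prime)\to 0$, $d(g_m,e)\leq 1$, and $g_m \notin U$. Since $\overline{B(e,1)}$ is compact, pass to a convergent subsequence $g_m \to g^*$. The closedness of $H^\prime$ together with $d(g_m,H^\prime)\to 0$ forces $g^* \in H^\prime$, and $d(g^*,e)\leq 1 \leq 2$ gives $g^* \in K^\prime \subset U$. Since $U$ is open, $g_m \in U$ for all large $m$, a contradiction.

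Finally, for any $g$ in the prescribed set, $g \in U_{g_i}$ for some $i$, and Proposition \ref{estimate of oscillatory} applied at $g_i$ yields (\ref{inequality of int K0}) with constant $C_{g_i}$. Taking $C = \max_i C_{g_i}$ gives an implied constant depending only on the support of $b$ and $L^\infty$-norms of $b$ and finitely many of its derivatives, as required. No step is substantive; the only thing one must notice is that compactness of $K^\prime$ allows the finite maximization, so that the $g_0$-dependence of the local implied constant in Proposition \ref{estimate of oscillatory} disappears in the statement of the corollary.
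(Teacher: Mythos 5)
Your argument is correct and is essentially the paper's own proof: both cover the compact set $H^\prime\cap\{d(\cdot,e)\leq 2\}$ by finitely many of the neighborhoods from Proposition \ref{estimate of oscillatory} and then choose $\delta_0$ small enough (you make the compactness step explicit with a sequence argument, the paper asserts it directly) so that the prescribed set lies in the finite union, taking the maximum of the finitely many implied constants.
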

    
    \begin{proof}
        The subgroup $H^\prime$ is closed in $G_0$, so the set $B^\prime (2) = \{ g_0\in H^\prime \,|\, d(g_0,e)\leq 2 \}$
        is compact. There exist finitely many $g_0^{(i)}\in H^\prime$ with neighborhoods $U^{(i)} \subset \SL(2,\BC)$ given in Proposition \ref{estimate of oscillatory} so that $B^\prime (2)$ is covered by the union of $U^{(i)}$'s. Hence, (\ref{inequality of int K0}) holds for any $g\in \bigcup_i U^{(i)}$. Let $\delta_0>0$ be sufficiently small so that if $d(g,B^\prime(2)) \leq \delta_0$, then $g\in \bigcup_i U^{(i)} $ and so $g$ satisfies (\ref{inequality of int K0}). Now if $d(g,H^\prime) \leq \delta_0$ and $d(g,e)\leq 1$, then $d(g,B^\prime(2)) \leq \delta_0$ provided $\delta_0<1$.
    \end{proof}

\subsection{Bounds for $J(s,g)$}\label{sec bound for J}

   Now let us go back to estimate $J(s,g)$.  
    We will still let $\psi$ be as in \eqref{defn of psi}.

    \begin{lemma}\label{integral product lemma}
            Suppose that $c\in C_c^\infty(\BR^4\times \cP\backslash\cS)$ is a smooth and compactly supported function.
            Then there is another smooth and compactly supported function $c_1\in C_c^\infty(\cP\backslash \cS)$ such that \begin{align*}
                &\int_{K_0}\left(\int_{\BR^4} c(x_1,t_1,x_2,t_2,k,g) \exp\left(is{\phi}(x_1,t_1,x_2,t_2,k,g)\right) dx_1 dt_1 dx_2  dt_2 \right) dk\\
              = &s^{-2}\int_{K_0} c_1(k,g)\exp\left(is{\psi}(k,g)\right) dk + O(s^{-3}),
            \end{align*}
            where the implied constant depends on the support of $c$,  the $L^\infty$-norms of $c$ and finitely many of its derivatives.
        \end{lemma}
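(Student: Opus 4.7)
The plan is to apply the classical stationary phase asymptotic (Proposition \ref{stationary phase}) in the inner four variables $(x_1,t_1,x_2,t_2)$, treating $(k,g)$ as smooth parameters. For each fixed $(k,g)\in\cP\backslash\cS$, the phase $\phi(\cdot,\cdot,\cdot,\cdot,k,g)$ on $\BR^4$ has a unique critical point at $\bigl(\eta_1(k),\xi_1(k),\eta_2(k,g),\xi_2(k,g)\bigr)$ by the very definition of the functions $\eta_i,\xi_i$ in (\ref{51}) and (\ref{51'}), and the value of $\phi$ at this critical point is $\psi(k,g)$ by (\ref{defn of psi}).

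Next, I would verify that this critical point is Morse by extracting the $(x_1,t_1,x_2,t_2)$-block of the Hessian computed in Proposition \ref{Hessian}. Since $\phi$ splits as a difference of a term in $(x_1,t_1)$ and a term in $(x_2,t_2)$, the inner Hessian is block-diagonal, and from the formulas already established the four diagonal entries are $e^{-2\xi_1(k)},\,1,\,-e^{-2\xi_2(k,g)},\,-1$, with all off-diagonal entries vanishing. Thus the inner Hessian has determinant $e^{-2(\xi_1(k)+\xi_2(k,g))}>0$ and signature $0$, so it is nondegenerate (uniformly on any compact subset of $\cP\backslash\cS$).

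Applying Proposition \ref{stationary phase} with $W$ equal to the single critical point (so $d=4$, $e=0$, $\sigma=0$) yields, uniformly in $(k,g)$ over the projection of $\operatorname{supp}(c)$ to the parameter factor,
\begin{align*}
\int_{\BR^4} c\,e^{is\phi}\,dx_1 dt_1 dx_2 dt_2 = \frac{(2\pi)^2}{s^2}\, e^{\xi_1(k)+\xi_2(k,g)}\, c\bigl(\eta_1(k),\xi_1(k),\eta_2(k,g),\xi_2(k,g),k,g\bigr)\, e^{is\psi(k,g)} + O(s^{-3}).
\end{align*}
Defining $c_1(k,g) := (2\pi)^2\, e^{\xi_1(k)+\xi_2(k,g)}\, c\bigl(\eta_1(k),\xi_1(k),\eta_2(k,g),\xi_2(k,g),k,g\bigr)$ gives a function that is smooth on $\cP\backslash\cS$ (since $\eta_i,\xi_i$ are smooth there and $c$ is smooth) and compactly supported: it vanishes outside the set of $(k,g)$ for which the critical point lies in the $(x_1,t_1,x_2,t_2)$-support of $c$, which is a compact subset of $\cP\backslash\cS$. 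Integrating over $K_0$ then gives the claim.

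The only real obstacle is uniformity of the stationary phase error in the parameters $(k,g)$. This is handled by the standard refinement: introduce a smooth cutoff $\chi(x_1,t_1,x_2,t_2;k,g)$ that equals $1$ in a small neighborhood of the critical point and has support in a slightly larger neighborhood contained in the interior of $\operatorname{supp}(c)$; the $(1-\chi)$ piece has no critical points and contributes $O(s^{-N})$ by repeated integration by parts, while the $\chi$ piece falls under the local stationary phase expansion with constants depending only on finitely many derivatives of $c$ (and on the compact parameter range). Since $\operatorname{supp}(c)$ is compact in $\BR^4\times(\cP\backslash\cS)$, these derivative bounds are uniform, yielding the required uniform $O(s^{-3})$.
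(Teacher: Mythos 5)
Your proposal is correct and follows essentially the same route as the paper: identify $\bigl(\eta_1(k),\xi_1(k),\eta_2(k,g),\xi_2(k,g)\bigr)$ as the unique critical point of $\phi(\cdot,k,g)$ with critical value $\psi(k,g)$, observe (via the computations of Proposition \ref{Hessian}) that the inner Hessian is $\operatorname{diag}\bigl(e^{-2\xi_1(k)},1,-e^{-2\xi_2(k,g)},-1\bigr)$, hence uniformly nondegenerate with signature $0$ on the compact support, and apply Proposition \ref{stationary phase} to obtain the same $c_1(k,g)=(2\pi)^2 e^{\xi_1(k)+\xi_2(k,g)}\,c(\eta_1(k),\xi_1(k),\eta_2(k,g),\xi_2(k,g),k,g)$ before integrating over $K_0$. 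Your added remarks on parameter uniformity and the compact support of $c_1$ are consistent with (and slightly more explicit than) the paper's argument.
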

        \begin{proof}
            Since $(k,g)\in\cP\backslash\cS$, from (\ref{51}) and (\ref{51'}), $(\eta_1(k),\xi(k),\eta_2(k,g),\xi_2(k,g)) \in \BR^4$ is the unique critical point of $\phi(\cdot,k,g)$.  Moreover, it may be shown in the same way as the proof of Proposition \ref{Hessian} that the Hessian at this critical point is
            \begin{align*}
                D=\begin{pmatrix}
                       e^{-2\xi_1(k)} &&&\\
                       &1&&\\
                       &&-e^{-2\xi_2(k,g)}&\\
                       &&&-1
                \end{pmatrix}
            \end{align*}
            so that the critical point is uniformly nondegenerate for $(\eta_1(k),\xi(k),\eta_2(k,g),\xi_2(k,g),k,g)$ in the compact set $ \supp(c)$.
            We define $c_1\in C_c^\infty(\cP\backslash \cS)$ by
            \begin{align*}
                &c_1(k,g) = (2\pi)^{2}\exp\left({\xi_1(k)+\xi_2(k ,g)}\right) \cdot c(\eta_1(k),\xi_1(k),\eta_2(k,g),\xi_2(k ,g),k,g).
            \end{align*}
            Then the method of stationary phase (Proposition \ref{stationary phase}) gives
            \begin{align*}
                &\int_{\BR^4} c(x_1,t_1,x_2,t_2,k,g) \exp\left(is {\phi}(x_1,t_1,x_2,t_2,k,g)\right)  dx_1 dt_1 dx_2  dt_2\\
                =& s^{-2}c_1(k,g)\exp\left(is {\psi}(k,g)\right) + O(s^{-3}).
            \end{align*}
            Taking integral on $K_0$ proves the lemma.
        \end{proof}
    
    \begin{proof}[Proof of Proposition \ref{bound for J}]
         Let $\delta>0$ be a small constant to be chosen later.  If $d(g,H^\prime) \geq \delta$, then the analysis of critical points and Hessians of the phase function $\phi$ in Corollary \ref{corollary of critical point description} and Proposition \ref{Hessian} shows that
         \begin{align}\label{bound of J when dist is large}
             J(s,g)\ll_\delta s^{-3}
         \end{align}
         by applying Proposition \ref{stationary phase}.

         We consider the degenerate case and assume $d(g,H^\prime)<\delta$.  Recall that
         \begin{align*}
            J(s,g) = \int_{\BR^4}\int_{K_0} b(x_1,t_1,x_2,t_2,k)\exp(is\phi(x_1,t_1,x_2,t_2,k,g))  dk   dx_1 dt_1 dx_2 dt_2,
        \end{align*}
        where $b$ is a fixed smooth and compactly supported function on $\BR^4\times K_0$. By a partition of unity of $K_0\times G_0$, we have
        \begin{align*}
            b(x_1,t_1,x_2,t_2,k) = c(x_1,t_1,x_2,t_2,k,g) + c^\prime(x_1,t_1,x_2,t_2,k,g)
        \end{align*}
        so that $c\in C_c^\infty(\BR^4\times\cP\backslash\cS)$ and the support of $c^\prime$ does not contain any critical points of $\phi$. Hence,
        \begin{align*}
            J(s,g) &= \int_{\BR^4}\int_{K_0} c(x_1,t_1,x_2,t_2,k)\exp(is\phi(x_1,t_1,x_2,t_2,k,g))  dk   dx_1 dt_1 dx_2 dt_2 + O_N(s^{-N})\\
            &=s^{-2}\int_{K_0} c_1(k,g)\exp\left(is{\psi}(k,g)\right) dk + O(s^{-3}),
        \end{align*}
        where $c_1\in C_c^\infty(\cP\backslash\cS)$ is obtained from Lemma \ref{integral product lemma}. If we let the cutoff function in Corollary \ref{corollary estimate when g is close to H} be $c_1$ and let $\delta$ be $\delta_0$ in the corollary, then
        \begin{align}\label{bound of J when dist is small}
             J(s,g) \ll s^{-2}\left(1+s\,d(g,H^\prime)\right)^{-1}.
        \end{align}
         By combining the bounds (\ref{bound of J when dist is large}) and (\ref{bound of J when dist is small}) with the choice $\delta=\delta_0$, we complete the proof.
    \end{proof}

    \bibliographystyle{plain} 
    \bibliography{refs.bib} 

@article {ichino2008,
    AUTHOR = {Ichino, Atsushi},
     TITLE = {Trilinear forms and the central values of triple product
              {$L$}-functions},
   JOURNAL = {Duke Math. J.},
  FJOURNAL = {Duke Mathematical Journal},
    VOLUME = {145},
      YEAR = {2008},
    NUMBER = {2},
     PAGES = {281--307},
      ISSN = {0012-7094,1547-7398},
   MRCLASS = {11F67 (11F70)},
  MRNUMBER = {2449948},
MRREVIEWER = {Min\ Ho\ Lee},
       DOI = {10.1215/00127094-2008-052},
       URL = {https://doi.org/10.1215/00127094-2008-052},
}

@article{iwaniec1995norms,
  AUTHOR = {Iwaniec, H. and Sarnak, P.},
     TITLE = {{$L^\infty$} norms of eigenfunctions of arithmetic surfaces},
   JOURNAL = {Ann. of Math. (2)},
  FJOURNAL = {Annals of Mathematics. Second Series},
    VOLUME = {141},
      YEAR = {1995},
    NUMBER = {2},
     PAGES = {301--320},
      ISSN = {0003-486X,1939-8980},
   MRCLASS = {11F72 (11F37 58G25 81Q50)},
  MRNUMBER = {1324136},
MRREVIEWER = {Jens\ Bolte},
       DOI = {10.2307/2118522},
       URL = {https://doi.org/10.2307/2118522},
}

@article{selberg1956harmonic,
  AUTHOR = {Selberg, A.},
     TITLE = {Harmonic analysis and discontinuous groups in weakly symmetric
              {R}iemannian spaces with applications to {D}irichlet series},
   JOURNAL = {J. Indian Math. Soc. (N.S.)},
  FJOURNAL = {The Journal of the Indian Mathematical Society. New Series},
    VOLUME = {20},
      YEAR = {1956},
     PAGES = {47--87},
      ISSN = {0019-5839,2455-6475},
   MRCLASS = {10.1X},
  MRNUMBER = {88511},
MRREVIEWER = {F.\ V.\ Atkinson},
}

@book{helgason1984groups,
  AUTHOR = {Helgason, Sigurdur},
     TITLE = {Groups and geometric analysis},
    SERIES = {Mathematical Surveys and Monographs},
    VOLUME = {83},
      NOTE = {Integral geometry, invariant differential operators, and
              spherical functions,
              Corrected reprint of the 1984 original},
 PUBLISHER = {American Mathematical Society, Providence, RI},
      YEAR = {2000},
     PAGES = {xxii+667},
      ISBN = {0-8218-2673-5},
   MRCLASS = {22-02 (22E30 22E46 43A85 43A90 44A12 53-02 58-02)},
  MRNUMBER = {1790156},
       DOI = {10.1090/surv/083},
       URL = {https://doi.org/10.1090/surv/083},
}

@article{marshall2016geodesic,
  AUTHOR = {Marshall, Simon},
     TITLE = {Geodesic restrictions of arithmetic eigenfunctions},
   JOURNAL = {Duke Math. J.},
  FJOURNAL = {Duke Mathematical Journal},
    VOLUME = {165},
      YEAR = {2016},
    NUMBER = {3},
     PAGES = {463--508},
      ISSN = {0012-7094,1547-7398},
   MRCLASS = {11F25 (11F41 35P20)},
  MRNUMBER = {3466161},
MRREVIEWER = {Neven\ Grbac},
       DOI = {10.1215/00127094-3166736},
       URL = {https://doi.org/10.1215/00127094-3166736},
}

@article{marshall2015restrictions,
  AUTHOR = {Marshall, Simon},
     TITLE = {Restrictions of {$SL_3$} {M}aass forms to maximal flat
              subspaces},
   JOURNAL = {Int. Math. Res. Not. IMRN},
  FJOURNAL = {International Mathematics Research Notices. IMRN},
      YEAR = {2015},
    NUMBER = {16},
     PAGES = {6988--7015},
  FJOURNAL = {International Mathematics Research Notices. IMRN},
      ISSN = {1073-7928,1687-0247},
   MRCLASS = {11F60},
  MRNUMBER = {3428953},
MRREVIEWER = {Neven\ Grbac},
       DOI = {10.1093/imrn/rnu155},
       URL = {https://doi.org/10.1093/imrn/rnu155},
}

@article{zelditch1992kuznecov,
  AUTHOR = {Zelditch, Steven},
     TITLE = {Kuznecov sum formulae and {S}zeg{\"o} limit formulae on manifolds},
   JOURNAL = {Comm. Partial Differential Equations},
  FJOURNAL = {Communications in Partial Differential Equations},
    VOLUME = {17},
      YEAR = {1992},
    NUMBER = {1-2},
     PAGES = {221--260},
      ISSN = {0360-5302,1532-4133},
   MRCLASS = {58G18 (35P05 58G15 58G25)},
  MRNUMBER = {1151262},
MRREVIEWER = {Alejandro\ Uribe},
       DOI = {10.1080/03605309208820840},
       URL = {https://doi.org/10.1080/03605309208820840},
}

@article{blomer2019sup,
  AUTHOR = {Blomer, Valentin and Harcos, Gergely and Maga, P\'eter and
              Mili\'cevi\'c, Djordje},
     TITLE = {The sup-norm problem for {$\rm GL(2)$} over number fields},
   JOURNAL = {J. Eur. Math. Soc. (JEMS)},
  FJOURNAL = {Journal of the European Mathematical Society (JEMS)},
    VOLUME = {22},
      YEAR = {2020},
    NUMBER = {1},
     PAGES = {1--53},
      ISSN = {1435-9855,1435-9863},
   MRCLASS = {11F72 (11F25 11F55)},
  MRNUMBER = {4046009},
MRREVIEWER = {Martin\ Raum},
       DOI = {10.4171/jems/916},
       URL = {https://doi.org/10.4171/jems/916},
}

@article{blomer2016subconvexity,
  AUTHOR = {Blomer, Valentin and Maga, P\'eter},
     TITLE = {Subconvexity for sup-norms of cusp forms on {$\rm {PGL}(n)$}},
   JOURNAL = {Selecta Math. (N.S.)},
  FJOURNAL = {Selecta Mathematica. New Series},
    VOLUME = {22},
      YEAR = {2016},
    NUMBER = {3},
     PAGES = {1269--1287},
      ISSN = {1022-1824,1420-9020},
   MRCLASS = {11F55 (11D75 11F72)},
  MRNUMBER = {3518551},
MRREVIEWER = {Martin\ Raum},
       DOI = {10.1007/s00029-015-0219-5},
       URL = {https://doi.org/10.1007/s00029-015-0219-5},
}

@article{blomer2016bounds,
  AUTHOR = {Blomer, Valentin and Harcos, Gergely and Mili\'cevi\'c,
              Djordje},
     TITLE = {Bounds for eigenforms on arithmetic hyperbolic 3-manifolds},
   JOURNAL = {Duke Math. J.},
  FJOURNAL = {Duke Mathematical Journal},
    VOLUME = {165},
      YEAR = {2016},
    NUMBER = {4},
     PAGES = {625--659},
      ISSN = {0012-7094,1547-7398},
   MRCLASS = {11F12 (11F55 11F72 11J25)},
  MRNUMBER = {3474814},
MRREVIEWER = {Anton\ Deitmar},
       DOI = {10.1215/00127094-3166952},
       URL = {https://doi.org/10.1215/00127094-3166952},
}

@article{koyama1995norms,
  AUTHOR = {Koyama, Shin-ya},
     TITLE = {{$L^\infty$}-norms on eigenfunctions for arithmetic hyperbolic
              {$3$}-manifolds},
   JOURNAL = {Duke Math. J.},
  FJOURNAL = {Duke Mathematical Journal},
    VOLUME = {77},
      YEAR = {1995},
    NUMBER = {3},
     PAGES = {799--817},
      ISSN = {0012-7094,1547-7398},
   MRCLASS = {11F72 (58G25 81Q50)},
  MRNUMBER = {1324641},
MRREVIEWER = {Jens\ Bolte},
       DOI = {10.1215/S0012-7094-95-07724-2},
       URL = {https://doi.org/10.1215/S0012-7094-95-07724-2},
}

@article{ye1989kloosterman,
  AUTHOR = {Ye, Yangbo},
     TITLE = {Kloosterman integrals and base change for {${\rm GL}(2)$}},
   JOURNAL = {J. Reine Angew. Math.},
  FJOURNAL = {Journal f\"ur die Reine und Angewandte Mathematik. [Crelle's
              Journal]},
    VOLUME = {400},
      YEAR = {1989},
     PAGES = {57--121},
      ISSN = {0075-4102,1435-5345},
   MRCLASS = {11R39 (11F72 11L05 22E55)},
  MRNUMBER = {1013725},
MRREVIEWER = {Daniel\ Bump},
       DOI = {10.1515/crll.1989.400.57},
       URL = {https://doi.org/10.1515/crll.1989.400.57},
}

@article{jiang2006periods,
  AUTHOR = {Jiang, Di-Hua and Li, Jian-Shu and Zhang, Shou-Wu},
     TITLE = {Periods and distribution of cycles on {H}ilbert modular
              varieties},
   JOURNAL = {Pure Appl. Math. Q.},
  FJOURNAL = {Pure and Applied Mathematics Quarterly},
    VOLUME = {2},
      YEAR = {2006},
    NUMBER = {1},
     PAGES = {219--277},
      ISSN = {1558-8599,1558-8602},
   MRCLASS = {11G18 (11F41 11F67)},
  MRNUMBER = {2217573},
MRREVIEWER = {Fabrizio\ Andreatta},
       DOI = {10.4310/PAMQ.2006.v2.n1.a10},
       URL = {https://doi.org/10.4310/PAMQ.2006.v2.n1.a10},
}

@article{zhang2014automorphic,
  AUTHOR = {Zhang, Wei},
     TITLE = {Automorphic period and the central value of {R}ankin-{S}elberg
              {L}-function},
   JOURNAL = {J. Amer. Math. Soc.},
  FJOURNAL = {Journal of the American Mathematical Society},
    VOLUME = {27},
      YEAR = {2014},
    NUMBER = {2},
     PAGES = {541--612},
      ISSN = {0894-0347,1088-6834},
   MRCLASS = {11F67 (11F70 11G40 22E55)},
  MRNUMBER = {3164988},
MRREVIEWER = {Neven\ Grbac},
       DOI = {10.1090/S0894-0347-2014-00784-0},
       URL = {https://doi.org/10.1090/S0894-0347-2014-00784-0},
}

@article{blomer2010twisted,
  AUTHOR = {Blomer, Valentin and Harcos, Gergely},
     TITLE = {Twisted {$L$}-functions over number fields and {H}ilbert's
              eleventh problem},
   JOURNAL = {Geom. Funct. Anal.},
  FJOURNAL = {Geometric and Functional Analysis},
    VOLUME = {20},
      YEAR = {2010},
    NUMBER = {1},
     PAGES = {1--52},
      ISSN = {1016-443X,1420-8970},
   MRCLASS = {11F41 (11E45 11F70 11F72 11M41)},
  MRNUMBER = {2647133},
MRREVIEWER = {Roelof\ Wichert\ Bruggeman},
       DOI = {10.1007/s00039-010-0063-x},
       URL = {https://doi.org/10.1007/s00039-010-0063-x},
}

@article{wyman2021period,
  AUTHOR = {Wyman, Emmett L.},
     TITLE = {Period integrals in nonpositively curved manifolds},
   JOURNAL = {Math. Res. Lett.},
  FJOURNAL = {Mathematical Research Letters},
    VOLUME = {27},
      YEAR = {2020},
    NUMBER = {5},
     PAGES = {1513--1563},
      ISSN = {1073-2780,1945-001X},
   MRCLASS = {58J50 (53C21)},
  MRNUMBER = {4216596},
MRREVIEWER = {Kouichi\ Taira},
       DOI = {10.4310/MRL.2020.v27.n5.a10},
       URL = {https://doi.org/10.4310/MRL.2020.v27.n5.a10},
}

@article{wyman2019looping,
  AUTHOR = {Wyman, Emmett L.},
     TITLE = {Looping directions and integrals of eigenfunctions over
              submanifolds},
   JOURNAL = {J. Geom. Anal.},
  FJOURNAL = {Journal of Geometric Analysis},
    VOLUME = {29},
      YEAR = {2019},
    NUMBER = {2},
     PAGES = {1302--1319},
      ISSN = {1050-6926,1559-002X},
   MRCLASS = {58J50 (35P15 35P20 35R01)},
  MRNUMBER = {3935259},
MRREVIEWER = {G.\ V.\ Rozenblum},
       DOI = {10.1007/s12220-018-0039-x},
       URL = {https://doi.org/10.1007/s12220-018-0039-x},
}

@article{sogge2017geodesic,
  AUTHOR = {Sogge, Christopher D. and Xi, Yakun and Zhang, Cheng},
     TITLE = {Geodesic period integrals of eigenfunctions on {R}iemannian
              surfaces and the {G}auss-{B}onnet theorem},
   JOURNAL = {Camb. J. Math.},
  FJOURNAL = {Cambridge Journal of Mathematics},
    VOLUME = {5},
      YEAR = {2017},
    NUMBER = {1},
     PAGES = {123--151},
      ISSN = {2168-0930,2168-0949},
   MRCLASS = {53C21 (35L20 35P15 35R01 53C20)},
  MRNUMBER = {3637654},
MRREVIEWER = {Andre\ Reznikov},
       DOI = {10.4310/CJM.2017.v5.n1.a2},
       URL = {https://doi.org/10.4310/CJM.2017.v5.n1.a2},
}

@article{chen2015integrals,
  AUTHOR = {Chen, Xuehua and Sogge, Christopher D.},
     TITLE = {On integrals of eigenfunctions over geodesics},
   JOURNAL = {Proc. Amer. Math. Soc.},
  FJOURNAL = {Proceedings of the American Mathematical Society},
    VOLUME = {143},
      YEAR = {2015},
    NUMBER = {1},
     PAGES = {151--161},
      ISSN = {0002-9939,1088-6826},
   MRCLASS = {35R01 (35P15 58J50)},
  MRNUMBER = {3272740},
MRREVIEWER = {Qu\cfac oc\ Anh\ Ng\^o},
       DOI = {10.1090/S0002-9939-2014-12233-7},
       URL = {https://doi.org/10.1090/S0002-9939-2014-12233-7},
}

@book{good1983local,
  AUTHOR = {Good, Anton},
     TITLE = {Local analysis of {S}elberg's trace formula},
    SERIES = {Lecture Notes in Mathematics},
    VOLUME = {1040},
 PUBLISHER = {Springer-Verlag, Berlin},
      YEAR = {1983},
     PAGES = {i+128},
      ISBN = {3-540-12713-5},
   MRCLASS = {11F72 (11F12 22E40)},
  MRNUMBER = {727476},
MRREVIEWER = {J\"urgen\ Elstrodt},
       DOI = {10.1007/BFb0073074},
       URL = {https://doi.org/10.1007/BFb0073074},
}

@article {hejhal1982certaines,
    AUTHOR = {Hejhal, Dennis A.},
     TITLE = {Sur certaines s\'eries de {D}irichlet associ\'ees aux
              g\'eod\'esiques ferm\'ees d'une surface de {R}iemann compacte},
   JOURNAL = {C. R. Acad. Sci. Paris S\'er. I Math.},
  FJOURNAL = {Comptes Rendus des S\'eances de l'Acad\'emie des Sciences.
              S\'erie I. Math\'ematique},
    VOLUME = {294},
      YEAR = {1982},
    NUMBER = {8},
     PAGES = {273--276},
      ISSN = {0249-6291},
   MRCLASS = {10D24 (30F35 58G25)},
  MRNUMBER = {656806},
}

@article{reznikov2015uniform,
  AUTHOR = {Reznikov, Andre},
     TITLE = {A uniform bound for geodesic periods of eigenfunctions on
              hyperbolic surfaces},
   JOURNAL = {Forum Math.},
  FJOURNAL = {Forum Mathematicum},
    VOLUME = {27},
      YEAR = {2015},
    NUMBER = {3},
     PAGES = {1569--1590},
      ISSN = {0933-7741,1435-5337},
   MRCLASS = {35R01 (11F70 35P15)},
  MRNUMBER = {3341487},
       DOI = {10.1515/forum-2012-0185},
       URL = {https://doi.org/10.1515/forum-2012-0185},
}

@article{humphries2024new,
  AUTHOR = {Humphries, Peter and Thorner, Jesse},
     TITLE = {New {V}ariants of {A}rithmetic {Q}uantum {E}rgodicity},
   JOURNAL = {Comm. Math. Phys.},
  FJOURNAL = {Communications in Mathematical Physics},
    VOLUME = {406},
      YEAR = {2025},
    NUMBER = {3},
     PAGES = {Paper No. 59},
      ISSN = {0010-3616,1432-0916},
   MRCLASS = {11 (37D40 58J51)},
  MRNUMBER = {4865903},
       DOI = {10.1007/s00220-024-05203-3},
       URL = {https://doi.org/10.1007/s00220-024-05203-3},
}

@article{burq2007restrictions,
  AUTHOR = {Burq, N. and G\'erard, P. and Tzvetkov, N.},
     TITLE = {Restrictions of the {L}aplace-{B}eltrami eigenfunctions to
              submanifolds},
   JOURNAL = {Duke Math. J.},
  FJOURNAL = {Duke Mathematical Journal},
    VOLUME = {138},
      YEAR = {2007},
    NUMBER = {3},
     PAGES = {445--486},
      ISSN = {0012-7094,1547-7398},
   MRCLASS = {58J50 (35P15)},
  MRNUMBER = {2322684},
MRREVIEWER = {Nelia\ Charalambous},
       DOI = {10.1215/S0012-7094-07-13834-1},
       URL = {https://doi.org/10.1215/S0012-7094-07-13834-1},
}

@article{ali20222,
  title={{On the $ L^2$-restriction norm problem for closed geodesics on the modular surface}},
  author={Ali, Dana Abou},
  journal={arXiv preprint arXiv:2211.04983},
  year={2022}
}

@article{avakumovic1956eigenfunktionen,
  AUTHOR = {Avakumovi\'c, Vojislav G.},
     TITLE = {\"Uber die {E}igenfunktionen auf geschlossenen {R}iemannschen
              {M}annigfaltigkeiten},
   JOURNAL = {Math. Z.},
  FJOURNAL = {Mathematische Zeitschrift},
    VOLUME = {65},
      YEAR = {1956},
     PAGES = {327--344},
      ISSN = {0025-5874,1432-1823},
   MRCLASS = {35.0X},
  MRNUMBER = {80862},
MRREVIEWER = {S.\ Bochner},
       DOI = {10.1007/BF01473886},
       URL = {https://doi.org/10.1007/BF01473886},
}

@article{levitan1952asymptotic,
  AUTHOR = {Levitan, B. M.},
     TITLE = {On the asymptotic behavior of the spectral function of a
              self-adjoint differential equation of the second order},
   JOURNAL = {Izv. Akad. Nauk SSSR Ser. Mat.},
  FJOURNAL = {Izvestiya Akademii Nauk SSSR. Seriya Matematicheskaya},
    VOLUME = {16},
      YEAR = {1952},
     PAGES = {325--352},
      ISSN = {0373-2436},
   MRCLASS = {36.0X},
  MRNUMBER = {58067},
MRREVIEWER = {E.\ A.\ Coddington},
}

@book{bump1998automorphic,
  AUTHOR = {Bump, Daniel},
     TITLE = {Automorphic forms and representations},
    SERIES = {Cambridge Studies in Advanced Mathematics},
    VOLUME = {55},
 PUBLISHER = {Cambridge University Press, Cambridge},
      YEAR = {1997},
     PAGES = {xiv+574},
      ISBN = {0-521-55098-X},
   MRCLASS = {11F70 (11F41 11R39 22E50 22E55)},
  MRNUMBER = {1431508},
MRREVIEWER = {Solomon\ Friedberg},
       DOI = {10.1017/CBO9780511609572},
       URL = {https://doi.org/10.1017/CBO9780511609572},
}

@book {MR1219310,
    AUTHOR = {Benedetti, Riccardo and Petronio, Carlo},
     TITLE = {Lectures on hyperbolic geometry},
    SERIES = {Universitext},
 PUBLISHER = {Springer-Verlag, Berlin},
      YEAR = {1992},
     PAGES = {xiv+330},
      ISBN = {3-540-55534-X},
   MRCLASS = {57M50 (30F40 30F60 51M10 57N10)},
  MRNUMBER = {1219310},
MRREVIEWER = {Colin\ C.\ Adams},
       DOI = {10.1007/978-3-642-58158-8},
       URL = {https://doi.org/10.1007/978-3-642-58158-8},
}

@article {deVerdiere,
    AUTHOR = {Colin de Verdi\`ere, Yves},
     TITLE = {Spectre du laplacien et longueurs des g\'eod\'esiques
              p\'eriodiques. {I}, {II}},
   JOURNAL = {Compositio Math.},
  FJOURNAL = {Compositio Mathematica},
    VOLUME = {27},
      YEAR = {1973},
     PAGES = {83--106; ibid. 27 (1973), 159--184},
      ISSN = {0010-437X,1570-5846},
   MRCLASS = {58G15 (53C20)},
  MRNUMBER = {348798},
MRREVIEWER = {Edmond\ Mazet},
}

@article{hou2024restrictions,
  title={{Restrictions of Maass forms on $\mathrm{SL}(2,\mathbb{C})$ to hyperbolic surfaces and geodesic tubes}},
  author={Hou, Jiaqi},
  journal={arXiv preprint arXiv:2410.17164},
  year={2024}
}

    \end{document}